\declaretheoremstyle{article}
\theoremstyle{article}
\declaretheorem[name=Theorem]{theorem}
\newtheorem{conjecture}[theorem]{Conjecture}
\newtheorem{corollary}[theorem]{Corollary}
\newtheorem{definition}[theorem]{Definition}
\newtheorem{example}[theorem]{Example}
\newtheorem{lemma}[theorem]{Lemma}
\newtheorem{proposition}[theorem]{Proposition}
\newtheorem{remark}[theorem]{Remark}
\newcounter{todocounter}
\DeclareDocumentCommand\addreference{g}{\stepcounter{todocounter}\todo[color = blue!30]{\thetodocounter. Add reference\IfNoValueF{#1}{: #1}}\xspace}
\DeclareDocumentCommand\checkthis{g}{\stepcounter{todocounter}\todo[color = red!50]{\thetodocounter. Check this\IfNoValueF{#1}{: #1}}\xspace}
\DeclareDocumentCommand\fixthis{g}{\stepcounter{todocounter}\todo[color = orange!50]{\thetodocounter. Fix this\IfNoValueF{#1}{: #1}}\xspace}
\DeclareDocumentCommand\expand{g}{\stepcounter{todocounter}\todo[color = green!50]{\thetodocounter. Expand\IfNoValueF{#1}{: #1}}\xspace}
\mathchardef\mhyphen="2D
\newcommand\dash{\nobreakdash-\hspace{0pt}}
\newcommand\bounded{\ensuremath{\mathrm{b}}}
\crefname{conjecture}{conjecture}{conjectures}
\DeclareMathOperator\Aut{Aut}
\DeclareMathOperator\Bimod{BiMod} 
\DeclareMathOperator\BIMOD{BIMOD} 
\DeclareMathOperator\BiMod{Bimod} 
\DeclareMathOperator\bimod{bimod} 
\DeclareMathOperator\Bl{Bl}
\DeclareMathOperator\clifford{C\ell}
\DeclareMathOperator\coh{coh}
\DeclareMathOperator\coker{coker}
\DeclareMathOperator\derived{\mathbf{D}}
\DeclareMathOperator\Ext{Ext}
\DeclareMathOperator\Gm{\mathbb{G}_{\mathrm{m}}}
\DeclareMathOperator\Gr{Gr}
\DeclareMathOperator\gr{gr}
\DeclareMathOperator\HH{H}
\DeclareMathOperator\HHHH{HH}
\DeclareMathOperator\Hom{Hom}
\DeclareMathOperator\sheafHom{\mathcal{H}om}
\DeclareMathOperator\id{id}
\DeclareMathOperator\im{im}
\DeclareMathOperator\Mat{Mat}
\DeclareMathOperator\Mod{Mod}
\DeclareMathOperator\mult{mult}
\DeclareMathOperator\PGL{PGL}
\DeclareMathOperator\Pic{Pic}
\DeclareMathOperator\Proj{Proj}
\DeclareMathOperator\Qcoh{Qcoh}
\DeclareMathOperator\QGr{QGr}
\DeclareMathOperator\qgr{qgr}
\DeclareMathOperator\relSpec{\underline{Spec}}
\DeclareMathOperator\relProj{\underline{Proj}}
\DeclareMathOperator\Spec{Spec}
\DeclareMathOperator\Sym{Sym}
\DeclareMathOperator\rk{rk}
\DeclareMathOperator\tensor{T}
\DeclareMathOperator\trace{tr}
\DeclareMathOperator\Tors{Tors}
\DeclareFontFamily{U}{mathx}{\hyphenchar\font45}
\DeclareFontShape{U}{mathx}{m}{n}{
      <5> <6> <7> <8> <9> <10>
      <10.95> <12> <14.4> <17.28> <20.74> <24.88>
      mathx10
      }{}
\DeclareSymbolFont{mathx}{U}{mathx}{m}{n}
\DeclareMathAccent{\widecheck}{0}{mathx}{"71}
\renewcommand\longrightarrow\rightarrow
\title{Comparison of two constructions of noncommutative surfaces with exceptional collections of length 4}
\author{Pieter Belmans \and Dennis Presotto \and Michel Van den Bergh}
\begin{document}

\maketitle

\begin{abstract}
  Recently the Euler forms on numerical Grothendieck groups of rank~4 whose properties mimick that of the Euler form of a smooth projective surface have been classified. This classification depends on a natural number~$m$, and suggests the existence of noncommutative surfaces which up to that point had not been considered for~$m\geq 2$. These have been constructed for~$m=2$ using noncommutative~$\mathbb{P}^1$\dash bundles, and for all~$m\geq 2$ by a different construction using maximal orders on~$\Bl_x\mathbb{P}^2$.

  In this article we compare the constructions for~$m=2$, i.e.~we compare the categories arising from half-ruled del Pezzo quaternion orders on~$\mathbb{F}_1$ with noncommutative~$\mathbb{P}^1$\dash bundles on~$\mathbb{P}^1$. This can be seen as a noncommutative instance of the classical isomorphism~$\mathbb{F}_1\cong\Bl_x\mathbb{P}^2$.
\end{abstract}

\tableofcontents

\section{Introduction}
Exceptional collections are a convenient way to study derived categories in (noncommutative) algebraic geometry, for (the noncommutative analogues of) smooth projective varieties. The existence of a full and strong exceptional collection is a very strong property for a triangulated category, but when it is satisfied the study of the derived category and its invariants essentially becomes the study of a (directed) finite-dimensional algebra.

One thing which sets apart the algebras obtained in this way from the class of all algebras is the behaviour of the Serre functor on the Grothendieck group: it can be shown that~$(-1)^{\dim X}\mathbb{S}$ is unipotent \cite[lemma~3.1]{MR1230966}. By \cite{MR2838062} the case of~$\dim X=1$ is trivial, the only finite-dimensional algebra that appears is the path algebra of the Kronecker quiver.

In \cite{1607.04246v1} the Serre functor in the case of a surface is studied, and some extra properties are determined: for a finitely generated free abelian group~$\Lambda$, a nondegenerate bilinear form~$\langle-,-\rangle\colon\Lambda\times\Lambda\to\mathbb{Z}$ and an automorphism~$s\in\Aut(\Lambda)$ we will ask that
\begin{description}[labelwidth=\widthof{\bfseries Serre automorphism\quad},align=right]
  \item[Serre automorphism]
    $\langle x,s(y)\rangle=\langle y,x \rangle$ for~$x,y \in \Lambda$;
  \item[unipotency]
    $s-\id_\Lambda$ is nilpotent;
  \item[rank]
    $\rk(s-\id_\Lambda)=2$;
\end{description}
Indeed, it can be shown that the action of the Serre functor on the Grothendieck group for all smooth projective surfaces (besides being unipotent) has the extra property that the rank of~$s-\id_\Lambda$ is precisely~2. Independently, in \cite{MR3691718} a closely related notion is introduced.

One can then try to classify the possible Serre functors on the numerical level, and indeed this is done for~$\Lambda$ of rank~3 and~4\ in \cite{1607.04246v1}. For rank~3 the only case is the Grothendieck group arising from~$\mathbb{P}^2$ as the Beilinson quiver (and its mutations). For rank~4 the classification is more interesting, and it is given in~\cite[theorem~A]{1607.04246v1}. The following matrices describe all possible bilinear forms (up to the action of the signed braid group corresponding to mutation of the exceptional collections):
\begin{equation}
  \tag{A}
  \label{equation:type-A}
  \begin{pmatrix}
    1 & 2 & 2 & 4 \\
    0 & 1 & 0 & 2 \\
    0 & 0 & 1 & 2 \\
    0 & 0 & 0 & 1
  \end{pmatrix}
\end{equation}
and
\begin{equation}
  \tag{$\mathrm{B}_m$}
  \label{equation:type-Bm}
  \begin{pmatrix}
    1 & m & 2m & m \\
    0 & 1 & 3 & 3 \\
    0 & 0 & 1 & 3 \\
    0 & 0 & 0 & 1
  \end{pmatrix}
\end{equation}
where~$m\in\mathbb{N}$. One observes that \eqref{equation:type-A} corresponds to~$\mathbb{P}^1\times\mathbb{P}^1$, $(\mathrm{B}_0)$ to~$\mathbb{P}^2\sqcup\mathrm{pt}$ and~$(\mathrm{B}_1)$ to~$\Bl_x\mathbb{P}^2$. Moreover in all of these cases it is possible to change the relations of the quiver, which corresponds to considering the noncommutative analogues of these varieties.

For~\eqref{equation:type-Bm} there are no smooth projective surfaces with this Euler form. But in \cite{1503.03992v4} a suitable adaptation of the theory of noncommutative~$\mathbb{P}^1$\dash bundles \cite{MR2958936} was used to construct a noncommutative surface of type~$(\mathrm{B}_2)$. Using completely different techniques the first two authors constructed noncommutative surfaces of type~\eqref{equation:type-Bm} for all~$m\geq 2$ in \cite{MR3847233}. This construction takes a maximal order on~$\mathbb{P}^2$ and pulls it back along the blowup of a point outside the ramification locus. We recall some of the details of these constructions in \cref{subsection:construction-as-bundle,subsection:construction-as-blowup}.

Hence there are two geometric constructions of an abelian category which behaves like the category of coherent sheaves on a smooth projective surface: it is of global dimension~2, and its derived category has a Serre functor which is compatible with the standard t-structure, just like in algebraic geometry.

The goal of the current paper is to compare these two constructions, i.e.~compare the abelian category arising as a noncommutative~$\mathbb{P}^1$\dash bundle on~$\mathbb{P}^1$ to the category of coherent sheaves on a maximal order on~$\Bl_x\mathbb{P}^2$. It turns out that they can be explicitly compared using the geometry of the linear systems used in their construction. The idea is to write both categories in terms of (relative) Clifford algebras, which in turn reduces the problem to comparing the data defining these algebras. This allows us to explicitly relate the input for the noncommutative~$\mathbb{P}^1$\dash bundle to the input for the maximal order construction.

In \cref{section:clifford-algebras} we recall various notions of Clifford algebra which are needed for the results in this paper. Most of these are well-known, but we also introduce a comparison result which might be of independent interest, relating the Clifford algebra with values in a relatively ample line bundles to the total Clifford algebra. In \cref{section:nc-P1-as-clifford} we consider the noncommutative~$\mathbb{P}^1$\dash bundles constructed in \cite{1503.03992v4}, and explain how we can use the formalism of generalised preprojective algebras to describe these abelian categories using Clifford algebras. In \cref{section:blowup-as-clifford} we obtain a similar description of the blowups of maximal orders on~$\mathbb{P}^2$ constructed in \cite{MR3847233} for the special case where the the fat point modules have multiplicity 2. In op.~cit.~it is also shown that these are del Pezzo orders in the sense of \cite{MR1954458}, and moreover that they are half-ruled in the sense of \cite{MR3343212}.

We compare these constructions in \cref{section:comparison}. The essential ingredient in the proof is a comparison between basepoint-free pencils of binary quartics and basepoint-free nets of conics together with the choice of a smooth conic in the net. The correspondence on the geometric level is not complete at the moment, but it is sufficient to give a comparison of the two constructions.

The comparison of the two constructions can be seen as a suitable noncommutative analogue of the isomorphism of the first Hirzebruch surface with the blowup of~$\mathbb{P}^2$ in a point. The formalism of Clifford algebras will allow us to compare Clifford algebras relative to the two projective morphisms in
\begin{equation}
  \label{equation:F1-BlxP2}
  \begin{tikzcd}
    & \mathbb{F}_1\cong\Bl_x\mathbb{P}^2 \arrow[ld, swap, "p"] \arrow[rd, "\pi"] \\
    \mathbb{P}^2 & & \mathbb{P}^1.
  \end{tikzcd}
\end{equation}
There is also a different (conjectural) noncommutative analogue of the isomorphism~$\mathbb{F}_1\cong\Bl_x\mathbb{P}^2$, which is actually a deformation of the commutative isomorphism. The two constructions in this case are the noncommutative~$\mathbb{P}^1$\dash bundles from \cite{MR2958936} versus the notion of blowup from \cite{MR1846352}. The main difference between the notions of bundles is that the one considered in this paper is of rank~$(1,4)$, where originally they were of rank~$(2,2)$. The main difference between the notions of blowing up is that we consider a point outside the point scheme (or rather the ramification), whereas the original blowup considers a point on the point scheme. We come back to the comparison of these constructions in \cref{subsection:final-remarks}.

\paragraph{Conventions}
The more general parts of this paper work over any scheme where~2~is invertible. For the actual comparison we will work with varieties over an algebraically closed field~$k$ not of characteristic~2 or~3.

\section{Clifford algebras}
\label{section:clifford-algebras}
The main tool in comparing the constructions from~\cite{1503.03992v4} and~\cite{MR3847233} is the formalism of Clifford algebras. It turns that it is possible to write both abelian categories as a category associated to a certain Clifford algebra. This is done in \cref{section:nc-P1-as-clifford} and \cref{section:blowup-as-clifford}. It then becomes possible to compare the linear algebra data describing the quadratic forms. In this way we can set up an explicit correspondence between the two models, and explicitly relate the input data for the~$\mathbb{P}^1$\dash bundle model to the input data for the blowup model. This is done in \cref{section:comparison}.

In \cref{subsection:clifford-algebras} we recall the notion of the Clifford algebra associated to a quadratic form on a scheme. This is a coherent~$\mathbb{Z}/2\mathbb{Z}$\dash graded sheaf of algebras. In \cref{subsection:clifford-algebras-with-values} we recall the more general version where the quadratic form is allowed to take values in a line bundle \cite{MR1260714,MR1220425}. In this case we cannot obtain a~$\mathbb{Z}/2\mathbb{Z}$\dash algebra, but the even part of the Clifford algebra is nevertheless well-defined.

In \cref{subsection:graded-clifford-algebras} we recall the notion of the graded Clifford algebra associated to a linear system of quadrics \cite{MR0814174,MR1356364}. Provided the linear system is basepoint-free we get an Artin--Schelter regular algebra which is finite over its center. 

Finally, we can generalise the notion of a graded Clifford algebra to the relative setting for projective morphisms. This is done in \cref{subsection:clifford-algebras-with-ample-values}, by combining \cref{subsection:clifford-algebras-with-values} and \cref{subsection:graded-clifford-algebras}. The classical case of a graded Clifford algebra then corresponds to the morphism~$\Proj\Sym_k(E)\to\Spec k$, where~$E$ is the vector space spanned by the elements of degree~1.

\subsection{Clifford algebras}
\label{subsection:clifford-algebras}
Classically the Clifford algebra is a finite-dimensional~$k$\dash algebra associated to a quadratic form~$q\colon E\to k$ on a vector space~$E$. It is defined as the quotient of the tensor algebra~$\tensor(E)$ by the two-sided ideal generated by
\begin{equation}
  v\otimes v-q(v)
\end{equation}
for~$v\in E$. We will use the notation~$\clifford_k(E,q)$ for the Clifford algebra. Under the standing assumption that the characteristic of~$k$ is not two we can also consider the symmetric bilinear form~$b_q$ associated to~$q$, and rewrite the relation as
\begin{equation}
  u\otimes v+v\otimes u-2b_q(u,v).
\end{equation}
The correspondence between symmetric bilinear forms and quadratic forms is an instance of the isomorphism
\begin{equation} \label{equation:comparison-of-sym}
  \Sym^2E\cong\Sym_2E
\end{equation}
Here~$\Sym_2E \subset E \otimes E$ is the subspace of all symmetric tensors, i.e. as a vectorspace it is generated by elements of the form~$u \otimes v + v \otimes u$. Conversely~$\Sym^2E$ is the quotient of~$E \otimes E$ by all relations of the form~$u \otimes v - v \otimes u$. The isomorphism in \eqref{equation:comparison-of-sym} is then given by
\[ \overline{u \otimes v} \mapsto \frac{u \otimes v + v \otimes u}{2} \]
This isomorphism will be used in a more general form on schemes later.

One can similarly define a Clifford algebras over arbitrary commutative rings
\begin{definition}
  \label{definition:clifford-algebra}
  Let~$A$ be a commutative algebra and~$E$ a finitely generated projective~$A$\dash module. Moreover let~$q\colon E\rightarrow A$ be a quadratic form, i.e.~(up to the identification of~$q$ and~$b_q$)~$q$ is a symmetric~$A$\dash linear bilinear form
  \begin{equation}
     q\colon\Sym_A^2(E)\longrightarrow A
  \end{equation}
  Then the Clifford algebra~$\clifford_A(E,q)$ is the quotient of~$\tensor_A(E)$ by the relations
  \begin{equation}
     v \otimes w + w \otimes v - q(vw)
  \end{equation}
\end{definition}

Observe that the obvious~$\mathbb{Z}$\dash grading on~$\tensor(V)$ equips~$\clifford(V,q)$ with a~$\mathbb{Z}/2\mathbb{Z}$\dash grading. The even degree part forms a subalgebra which we will denote~$\clifford(V,q)_0$, and the odd degree part forms the~$\clifford(V,q)_0$\dash bimodule~$\clifford(V,q)_1$.

We want to globalise these constructions to quadratic forms over schemes. First we have to say what we mean by a quadratic form on a scheme~$X$.
\begin{definition}
  Let~$X$ be a scheme such that~$2$ is invertible on~$X$. A \emph{quadratic form on~$X$} is a pair~$(\mathcal{E},q)$ where~$\mathcal{E}$ is a locally free sheaf and~$q\colon\Sym_2(\mathcal{E})\to\mathcal{O}_X$ is a morphism of~$\mathcal{O}_X$\dash modules, and~$\Sym_2\mathcal{E}$ denotes the submodule of symmetric tensors inside~$\tensor^2(\mathcal{E})$.
\end{definition}
Let us denote the symmetric square as~$\Sym^2\mathcal{E}$, i.e.~this is the quotient of~$\tensor^2\mathcal{E}$ by the relation~$v\otimes w-w\otimes v$. As in \eqref{equation:comparison-of-sym}, using the assumption that~$2$ is invertible on~$X$, there exists an isomorphism~$\Sym_2\mathcal{E}\cong\Sym^2\mathcal{E}$, relating quadratic forms to symmetric bilinear forms. We will from now on identify both sheaves and consider quadratic forms as morphisms from~$\Sym^2\mathcal{E}$.

We can then define what we mean by a Clifford algebra associated to such a quadratic form.
\begin{definition}
  \label{definition:clifford}
  Let~$X$ be a scheme. Let~$(\mathcal{E},q)$ be a quadratic form over~$X$. The \emph{Clifford algebra}~$\clifford_X(\mathcal{E},q)$ is the sheafification of
  \begin{equation}
    U\mapsto\tensor_{\mathcal{O}_X(U)}(\mathcal{E}(U))/(v\otimes v-q(v)1\mid v\in\HH^0(U,\mathcal{E})).
  \end{equation}
\end{definition}
From the grading on the tensor algebra we see that~$\clifford_X(\mathcal{E},q)$ is a~$\mathbb{Z}/2\mathbb{Z}$\dash graded algebra, as was the case for~$\clifford(V,q)$. In order to be compatible with the notation from \cref{subsection:clifford-algebras-with-values} we will sometimes denote this Clifford algebra as~$\clifford_X(\mathcal{E},q,\mathcal{O}_X)$.

\begin{remark}
  Now in the case that~$q$ is nondegenerate we get that~$\clifford_X(\mathcal{E},q)$ is \emph{strongly}~$\mathbb{Z}/2\mathbb{Z}$\dash graded, and~$\clifford_X(\mathcal{E},q)_1$ is an invertible sheaf of~$\clifford_X(\mathcal{E},q)_0$\dash bimodules. See also \cite[example~1.1.24]{MR3523984}.
\end{remark}

Similar to \cref{definition:clifford-algebra} we can extend this definition by replacing~$\mathcal{O}_X$ by a sheaf of commutative algebras~$\mathcal{A}$ as follows.
\begin{definition}
  \label{definition:clifford-algebra-sheaf}
  Let~$X$ be a scheme and~$\mathcal{A}$ a sheaf of commutative algebras on~$X$. Let~$\mathcal{E}$ be a locally free~$\mathcal{A}$-module and let~$q\colon\Sym_{\mathcal{A}}^2(\mathcal{E})\to\mathcal{A}$ be a morphism of~$\mathcal{A}$\dash modules. The \emph{Clifford algebra}~$\clifford_\mathcal{A}(\mathcal{E},q)$ is the sheafification of
  \begin{equation}
    U\mapsto\tensor_{\mathcal{A}(U)}(\mathcal{E}(U))/(v\otimes_{\mathcal{A}(U)} v-q(v)1\mid v\in\HH^0(U,\mathcal{E})).
  \end{equation}
\end{definition}
We will use this construction in \cref{remark:graded-clifford-is-clifford,subsection:clifford-algebras-with-ample-values}, where~$\mathcal{A}$ will be a sheaf of \emph{graded} algebras concentrated in even degree. In this case we give~$\mathcal{E}$ degree~1, and then~$\clifford_{\mathcal{A}}(\mathcal{E},q)$ will be a~$\mathbb{Z}$\dash graded algebra.

\subsection{Clifford algebras with values in line bundles}
\label{subsection:clifford-algebras-with-values}
We would also like to consider morphisms of the form~$\Sym^2(\mathcal{E})\to\mathcal{L}$, where~$\mathcal{L}\not\cong\mathcal{O}_X$ is a line bundle on~$X$. In this case we cannot mimick the construction of \cref{subsection:clifford-algebras} to produce a~$\mathbb{Z}/2\mathbb{Z}$\dash graded Clifford algebra for this more general situation. But an analogue of the even part of the Clifford algebra \emph{can} be defined, together with a bimodule over this algebra \cite{MR1260714,MR1220425} which generalises the odd part of the usual Clifford algebra.

The generalisation of quadratic forms taking values in a line bundle is the following.
\begin{definition}
  Let~$X$ be a scheme such that~$2$ is invertible on~$X$. Let~$\mathcal{L}$ be an invertible sheaf. An \emph{$\mathcal{L}$\dash valued quadratic form on~$X$} is a triple~$(\mathcal{E},q,\mathcal{L})$ where~$\mathcal{E}$ is a locally free sheaf and~$q\colon\Sym^2(\mathcal{E})\to\mathcal{L}$ is a morphism of~$\mathcal{O}_X$\dash modules.
\end{definition}

Let~$X$ be a scheme. Let~$(\mathcal{E},q,\mathcal{L})$ be an~$\mathcal{L}$\dash valued quadratic form on~$X$. We can consider the total space of~$\mathcal{L}$ as
\begin{equation}
  p\colon Y\coloneqq\relSpec_X\left( \bigoplus_{n\in\mathbb{N}}\mathcal{L}^{\otimes n} \right)\to X.
\end{equation}
Because~$p^*\mathcal{L}\cong\mathcal{O}_Y$ the quadratic form~$(p^*\mathcal{E},p^*q)$ takes values in~$\mathcal{O}_Y$, so we are in the situation of the previous section.

\begin{definition}
  The sheaf of algebras~$p_*(\clifford_Y(p^*\mathcal{E},p^*q))$ on~$X$ is called the \emph{generalised Clifford algebra} or \emph{total Clifford algebra}. As~$p$ is an affine morphism,~$p_*$ is just the forgetful functor, and the structure as~$\mathcal{O}_Y$\dash module induces a~$\mathbb{Z}$\dash grading on the total Clifford algebra.

  The \emph{even Clifford algebra}~$\clifford_X(\mathcal{E},q,\mathcal{L})_0$ is the degree~0~subalgebra of~$p_*(\clifford_Y(p^*\mathcal{E},p^*q))$.

  The \emph{Clifford module}~$\clifford_X(\mathcal{E},q,\mathcal{L})_1$ is the degree~1~submodule of the total Clifford algebra considered as a bimodule over~$\clifford_X(\mathcal{E},q,\mathcal{L})_0$.
\end{definition}
It is not possible to combine these two pieces into a~$\mathbb{Z}/2\mathbb{Z}$\dash graded algebra. Rather we get that the bimodule structure gives rise to a multiplication map
\begin{equation}
  \clifford_X(\mathcal{E},q,\mathcal{L})_1\otimes_{\clifford_X(\mathcal{E},q,\mathcal{L})_0}\clifford_X(\mathcal{E},q,\mathcal{L})_1\to\clifford(\mathcal{E},q,\mathcal{L})_0\otimes_{\mathcal{O}_X}\mathcal{L}.
\end{equation}
These construction satisfy the same pleasant properties as the Clifford algebra from \cref{subsection:clifford-algebras}. In particular we will use that it is compatible with base change \cite[lemma~3.4]{MR1260714}.

\subsection{Graded Clifford algebras}
\label{subsection:graded-clifford-algebras}
In \cite{MR1260714,MR1220425} there is a construction of connected graded algebras based on linear algebra input. For sufficiently general choices it gives rise to an interesting class of Artin--Schelter regular algebras of arbitrary dimension which are finite over their center.

\begin{definition}
  \label{definition:graded-clifford-algebra}
  Let~$M_1,\dotsc,M_n$ be symmetric matrices in~$\Mat_n(k)$. The \emph{graded Clifford algebra} associated to~$(M_i)_{i=1}^n$ is the quotient of the graded free~$k$\dash algebra~$k\langle x_1,\dotsc,x_n,y_1,\dotsc,y_n\rangle$, where~$|x_i|=1$ and~$|y_i|=2$, by the relations
  \begin{enumerate}
    \item $x_ix_j+x_jx_i=\sum_{m=1}^n(M_m)_{i,j}y_m$, where~$i,j=1,\dots,n$
    \item $[x_i,y_j]=0$ and~$[y_i,y_j]=0$, where~$i,j=1,\dotsc,n$, i.e.~$y_i$ is central.
  \end{enumerate}
\end{definition}
To understand the properties of this graded Clifford algebra, we have to interpret the matrices~$M_1,\dotsc,M_n$ as quadratic forms~$Q_i$ (in the variables~$y_i$). In this way we obtain~$n$ quadric hypersurfaces in~$\mathbb{P}^{n-1}$, which span a linear system. There are many different ways in which the geometry of the linear system of quadrics influences the algebraic and homological properties of these Clifford algebras.

In particular, the graded Clifford algebra is Artin--Schelter regular if and only if the linear system of quadrics is basepoint-free \cite[proposition~7]{MR1356364}. It is also possible to describe the (fat) point modules \cite[proposition~9]{MR1356364}. We can consider the matrix~$M=\sum_{i=1}^nM_iy_i$, and at a point~$p\in\mathbb{P}_{y_1,\dotsc,y_n}^{n-1}$ we can consider the rank of the matrix~$M(p)$. Generically it is of full rank, and the point modules correspond to those points for which the rank is~1 or~2. We are interested in the case where~$n=3$, so the point modules are given by the determinant of~$M$, which describes a cubic curve inside~$\mathbb{P}^2$. We get back to this in \cref{subsection:quaternionic-noncommutative-planes}.

\begin{remark}
  \label{remark:graded-clifford-is-clifford}
  Although \cref{definition:graded-clifford-algebra} is at first sight quite different from \cref{definition:clifford-algebra}, we would like to mention that every graded Clifford algebra is in fact a Clifford algebra. To see this, let~$\clifford(M)$ be the graded Clifford algebra defined by~$n$~symmetric~$n \times n$ matrices~$M_m$. Then there is an isomorphism
  \begin{equation}
    \clifford(M) \cong \clifford_A(E,q)
  \end{equation}
  by setting
  \begin{equation}
    \begin{aligned}
       A &= k[y_1, \ldots, y_n] \\
       F &= kx_1 \oplus \ldots \oplus kx_n \\
       E &= F \otimes_k A \\
    \end{aligned}
  \end{equation}
  and considering the quadratic form
  \begin{equation}
    \begin{aligned}
      q_F\colon \Sym^2_kF &\rightarrow ky_1 \oplus \ldots \oplus ky_n \\
      x_ix_j + x_jx_i &\mapsto \sum_{m=1}^n (M_m)_{i,j} y_m \\
    \end{aligned}
  \end{equation}
  and extending it as
  \begin{equation}
    q\colon \Sym^2_AE = \Sym^2_kF \otimes_k A \xrightarrow{q_F \otimes \id_A} (ky_1 \oplus \ldots \oplus ky_n) \otimes_k A = A_2 \otimes_k A \rightarrow A
  \end{equation}
  In the next section we continue this idea of interpreting Clifford algebras in two ways.
\end{remark}

\subsection{Clifford algebras with values in ample line bundles}
\label{subsection:clifford-algebras-with-ample-values}
Consider a projective morphism~$f\colon Y\to X$, and let~$\mathcal{L}$ be an~$f$\dash relatively ample line bundle on~$Y$. In this case we can consider the graded~$\mathcal{O}_X$\dash algebra
\begin{equation}
  \label{equation:algebra-A}
  \mathcal{A}\coloneqq\bigoplus_{n\geq 0}f_*(\mathcal{L}^{\otimes n})
\end{equation}
and we have that the relative Proj recovers~$Y$,~i.e.~$Y\cong\relProj_X\mathcal{A}$.

Let~$\mathcal{E}$ be a vector bundle on~$X$. Consider a quadratic form
\begin{equation}
  q\colon\Sym_Y^2(f^*\mathcal{E})\to\mathcal{L}
\end{equation}
We can associate two Clifford algebras to this quadratic form:
\begin{enumerate}
  \item Using \cref{subsection:clifford-algebras-with-values} there is the sheaf of even Clifford algebras~$\clifford_0(f^*(\mathcal{E},q,\mathcal{L})$, which is a coherent sheaf of algebras on~$Y$.

  \item On the other hand, we can use the algebra~$\mathcal{A}$ from \eqref{equation:algebra-A} and consider it with a doubled grading,~i.e.
    \begin{equation}
      \mathcal{A}_n=
      \begin{cases}
        f_*(\mathcal{L}^{\otimes m}) & n=2m \\
        0 & n=2m+1.
      \end{cases}
    \end{equation}
    Changing the grading in this way does not change the property that~$Y$ is the relative Proj of~$\mathcal{A}$.

    Because~$f^*$ is monoidal and using the adjunction~$f^*\dashv f_*$ we can consider the quadratic form~$q$ as a morphism~$\Sym_X^2\mathcal{E}\to f_*(\mathcal{L})=\mathcal{A}_2$. We can then extend this morphism to the quadratic form
    \begin{equation}
      Q\colon\Sym_{\mathcal{A}}^2(\mathcal{E}\otimes_{\mathcal{O}_X}\mathcal{A})\to\mathcal{A}.
    \end{equation}

    Using \cref{subsection:clifford-algebras} we can define the Clifford algebra~$\clifford_{\mathcal{A}}(\mathcal{E}\otimes_{\mathcal{O}_X}\mathcal{A},Q)$, which we can consider as a sheaf of graded~$\mathcal{A}$\dash algebras on~$X$.
\end{enumerate}
We can compare these two constructions as follows.
\begin{proposition}
  \label{proposition:ample-comparison}
  With the notation from above, the coherent sheaf of algebras on~$Y$ associated to~$\clifford_{\mathcal{A}}(\mathcal{E}\otimes_{\mathcal{O}_X}\mathcal{A},Q)$ is isomorphic to the sheaf of even Clifford algebras~$\clifford_0(f^*\mathcal{E},q,\mathcal{L})$.

  Moreover, the coherent sheaf on~$Y$ associated to~$\clifford_{\mathcal{A}}(\mathcal{E}\otimes_{\mathcal{O}_X}\mathcal{A},Q)(1)$ is isomorphic to the Clifford bimodule~$\clifford_1(f^*\mathcal{E},q,\mathcal{L})$.

  \begin{proof}
    We have by the relative version of Serre's theorem that~$\Qcoh Y\cong\QGr_X\mathcal{A}$. Then~$f^*\mathcal{E}$ on~$Y$ is given by the graded~$\mathcal{A}$\dash module~$\mathcal{E}\otimes_{\mathcal{O}_X}\mathcal{A}$. It now suffices to observe that by the characterisation of relatively ample line bundles \cite[tag~01VJ]{stacks-project} we can compare the two constructions of Clifford algebras, using \cite[lemmas~3.1 and~3.2]{MR1260714}.

    The statement about the Clifford module follows from the fact that~$\mathcal{A}$ is concentrated in even degree, and \cite[lemma~3.1]{MR1260714}.
  \end{proof}
\end{proposition}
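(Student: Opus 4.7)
The plan is to reduce the proposition to an application of the relative Serre correspondence combined with the compatibility lemmas from \cite{MR1260714}. Both $\clifford_\mathcal{A}(\mathcal{E}\otimes_{\mathcal{O}_X}\mathcal{A},Q)$ and the pair $(\clifford_0,\clifford_1)(f^*\mathcal{E},q,\mathcal{L})$ are built from essentially the same local data; the task is to match the two ways of packaging it.

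First I would invoke the relative version of Serre's theorem, which gives an equivalence $\QGr_X\mathcal{A}\simeq\Qcoh Y$. Under this equivalence, the graded $\mathcal{A}$-module $\mathcal{E}\otimes_{\mathcal{O}_X}\mathcal{A}$ (with $\mathcal{E}$ placed in degree~$1$) corresponds to $f^*\mathcal{E}$ on~$Y$, and the shift~$(2)$ on the module side corresponds to $\otimes\mathcal{L}$ on the sheaf side, because of the doubled grading. Hence it suffices to check that the graded $\mathcal{A}$-algebra $\clifford_\mathcal{A}(\mathcal{E}\otimes_{\mathcal{O}_X}\mathcal{A},Q)$ sheafifies to the sheaf of algebras $\clifford_0(f^*\mathcal{E},q,\mathcal{L})$, and analogously that its twist by~$(1)$ sheafifies to the Clifford bimodule.

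Next I would unwind both constructions on a Zariski open $U\subset Y$ over which $\mathcal{L}$ trivializes. Over such a $U$ both definitions reduce to the ordinary Clifford algebra of $(f^*\mathcal{E}|_U,q|_U)$ from \cref{subsection:clifford-algebras} after trivializing~$\mathcal{L}$. On the total-space side the $\mathbb{Z}$-grading records how these local Clifford algebras are glued by powers of~$\mathcal{L}$, via $p_*\mathcal{O}_{\mathrm{Tot}(\mathcal{L})}=\bigoplus_n\mathcal{L}^{\otimes n}$; on the graded-algebra side the same gluing data is encoded by the internal grading of~$\mathcal{A}$. Concretely, the degree~$2m$ component of $\clifford_\mathcal{A}(\mathcal{E}\otimes_{\mathcal{O}_X}\mathcal{A},Q)$ should match $f_*\bigl(\clifford_0(f^*\mathcal{E},q,\mathcal{L})\otimes\mathcal{L}^{\otimes m}\bigr)$ and the degree~$2m+1$ component should match $f_*\bigl(\clifford_1(f^*\mathcal{E},q,\mathcal{L})\otimes\mathcal{L}^{\otimes m}\bigr)$.

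The main technical input is \cite[lemmas~3.1 and~3.2]{MR1260714}: lemma~3.2 identifies the even part of the total Clifford algebra with the degree zero piece in the grading coming from the total space of~$\mathcal{L}$, while lemma~3.1 supplies the base-change compatibility needed to match the local trivializations. Together with the characterisation of relatively ample line bundles in \cite[tag~01VJ]{stacks-project}, which ensures that sheafification $\QGr_X\mathcal{A}\to\Qcoh Y$ is faithful enough to detect the algebra and bimodule structures, these inputs yield the first isomorphism. The second statement follows by shifting by~$(1)$ and applying the same arguments, with the fact that $\mathcal{A}$ is concentrated in even degrees being essential to ensure that the shift isolates precisely the odd part of the total Clifford algebra. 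The hardest part is this last piece of bookkeeping: one must verify that the odd-degree pieces of $\clifford_\mathcal{A}(\mathcal{E}\otimes_{\mathcal{O}_X}\mathcal{A},Q)$, which have no counterpart in $\mathcal{A}$ itself, sheafify precisely to the Clifford bimodule rather than to a spurious sheaf on~$Y$.
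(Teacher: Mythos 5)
Your proposal is correct and follows essentially the same route as the paper's proof: relative Serre's theorem identifying $\Qcoh Y$ with $\QGr_X\mathcal{A}$, the identification of $f^*\mathcal{E}$ with the graded module $\mathcal{E}\otimes_{\mathcal{O}_X}\mathcal{A}$, the comparison via the characterisation of relatively ample line bundles together with lemmas~3.1 and~3.2 of \cite{MR1260714}, and the even-degree concentration of $\mathcal{A}$ for the bimodule statement. You merely spell out, via local trivialisations of $\mathcal{L}$ and the degree bookkeeping under the doubled grading, the comparison that the paper's terse proof leaves implicit.
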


\begin{example}
  As a special case of this construction we recover the graded Clifford algebra from \cref{subsection:graded-clifford-algebras}. To see this, we consider the graded Clifford algebra as an algebra over~$\mathbb{C}[y_1,\dotsc,y_n]$. If we denote~$E$ the vector space spanned by the~$y_i$'s, we have a morphism~$\mathbb{P}(E)\to\Spec k$, and the graded Clifford algebra is nothing but the Clifford algebra associated to~$(E\otimes_k\mathcal{O}_{\mathbb{P}(E)},q,\mathcal{O}_{\mathbb{P}(E)}(1))$, where~$q$ describes the linear system of quadrics as symmetric matrices in the global sections~$E\otimes E^\vee$.
\end{example}

\section{Noncommutative \texorpdfstring{$\mathbb{P}^1$}{P1}-bundles as Clifford algebras}
\label{section:nc-P1-as-clifford}
As mentioned in the introduction the goal of this paper is to compare two constructions of noncommutative surfaces of (numerical) type~$m=2$ of \eqref{equation:type-Bm} which was described above. We will prove that both categories are equivalent to (a Serre quotient of) the module category over a sheaf of graded Clifford algebras on~$\mathbb{P}^1$. In this section we describe how the noncommutative~$\mathbb{P}^1$\dash bundle obtained in~\cite[theorem~5.1]{1503.03992v4} is equivalent to the module category over a sheaf of graded Clifford algebras on~$\mathbb{P}^1$.

In \cref{subsection:construction-as-bundle} we quickly recall this construction as a noncommutative~$\mathbb{P}^1$\dash bundle, or rather the associated module category~$\QGr(\mathbb{S}({}_f (\mathcal{O}_{\mathbb{P}^1})_{\id}))$.

In \cref{subsection:twisting} we recall the technicalities in the construction of noncommutative~$\mathbb{P}^1$\dash bundles as in \cite{1503.03992v4} in more detail. We also recall the notion of \emph{twisting} for sheaf\dash $\mathbb{Z}$-\dash algebras, an operation which induces equivalences of categories at the level of~$\Gr$ and~$\QGr$. We use this twist operation to show that~$\QGr(\mathbb{S}({}_f (\mathcal{O}_{\mathbb{P}^1})_{\id}))$ is equivalent to~$\QGr(\mathrm{H}(\mathbb{P}^1/\mathbb{P}^1))$ for a sheaf of graded algebras~$\mathrm{H}(\mathbb{P}^1/\mathbb{P}^1)$ associated to the finite morphism~$f\colon\mathbb{P}^1 \rightarrow \mathbb{P}^1$.

In \cref{subsection:symmetric-sheaves} we then show that~$\mathrm{H}(\mathbb{P}^1/\mathbb{P}^1)$ is a \emph{symmetric sheaf of graded algebras}. In the particular case that~$f$ has degree~4, this implies using \cref{lemma:H4-clifford} that~$\mathrm{H}(\mathbb{P}^1/\mathbb{P}^1)$ is isomorphic to a sheaf of graded Clifford algebras on~$\mathbb{P}^1$.

The proofs of both \cref{lemma:HYX-symmetric} and \cref{lemma:H4-clifford} are based on local computations: it was shown in \cite[\S 3]{1503.03992v4} that noncommutative~$\mathbb{P}^1$\dash bundles can be studied locally using the theory of generalized preprojective algebras~$\Pi_C(D)$ associated to a relative Frobenius pair~$D/C$ of finite rank as in \cite{MR3565443}. We recall this theory in \cref{subsection:veronese-clifford-comparison}. In this section we also show that~$\QGr(\Pi_C(D))$ is equivalent to~$\QGr(\mathrm{H}(D/C))$ where~$\mathrm{H}(D/C)$ is a graded algebra with symmetric relations. The theory in \cref{subsection:symmetric-relations-clifford-comparison} then shows  that~$\mathrm{H}(D/C)$ maps surjectively onto a graded Clifford algebra~$\clifford_A(E,q)$. In the particular case of a relative Frobenius pair~$D/C$ of rank~4 (these are the only pairs we encounter locally when~$f$ has degree 4) this map is in fact an isomorphism by \cref{proposition:veronese-to-clifford-iso}.

We can summarise the steps in the comparison as follows.
\begin{equation*}
  \begin{array}{cr}
    \qgr \mathbb{S}({}_f(\mathcal{O}_{\mathbb{P}^1})_{\id}) \\
    \big\downarrow & \text{\cref{lemma:twist-2}} \\
    \qgr\Pi(Y/X) \\
    \big\downarrow \\
    \qgr\Pi(Y/X)^{(2)} \\
    \big\downarrow & \eqref{equation:2-veronese-PiYX} \\
    \qgr\mathrm{H}(Y/X) \\
    \big\downarrow & \text{\cref{corollary:H-is-clifford}} \\
    \qgr\left( \clifford_{\Sym(\mathcal{O}_{\mathbb{P}^1} \oplus \mathcal{O}_{\mathbb{P}^1}(1))}\big(\mathcal{O}_{\mathbb{P}^1}^{\oplus 3} \otimes_{\mathcal{O}_{\mathbb{P}^1}} \Sym(\mathcal{O}_{\mathbb{P}^1} \oplus \mathcal{O}_{\mathbb{P}^1}(1)), q\big) \right) \\
    \big\downarrow & \text{\cref{remark:q-global}} \\
    \qgr\left( \clifford_{\Sym(\mathcal{O}_{\mathbb{P}^1} \oplus \mathcal{O}_{\mathbb{P}^1}(1))}\big(E \otimes_k \Sym(\mathcal{O}_{\mathbb{P}^1} \oplus \mathcal{O}_{\mathbb{P}^1}(1)), q\big) \right)
  \end{array}
\end{equation*}

\subsection{Construction of the surface as a noncommutative bundle}
\label{subsection:construction-as-bundle}
In \cite{MR2958936} a notion of noncommutative~$\mathbb{P}^1$\dash bundles was introduced, in order to describe noncommutative Hirzebruch surfaces. This is done by defining a suitable notion of a noncommutative symmetric algebra for a locally free sheaf, where the left and right structures do not necessarily agree but where the rank is~2 on both sides.

In \cite{1503.03992v4} this construction was modified to define a noncommutative symmetric algebra where the rank is~1 on the left and~4 on the right. In particular it provides noncommutative~$\mathbb{P}^1$\dash bundles on~$\mathbb{P}^1$ as categories~$\QGr(\mathbb{S}({}_f \mathcal{L}_{\id}))$ where
\begin{enumerate}
  \item $\mathcal{L}$ is a line bundle on~$\mathbb{P}^1$,
  \item $f\colon\mathbb{P}^1 \rightarrow \mathbb{P}^1$ is a finite morphism of degree~4,
  \item $\mathbb{S}({}_f \mathcal{L}_{\id})$ is the \emph{symmetric sheaf~$\mathbb{Z}$\dash algebra} associated to the bimodule~${}_f \mathcal{L}_{\id}$
\end{enumerate}
One is referred to \cref{subsection:twisting} for the exact definition of this object.

Such a noncommutative~$\mathbb{P}^1$\dash bundle on~$\mathbb{P}^1$ has ``morphisms'' to two copies of~$\mathbb{P}^1$ (see \cite[\S4]{1503.03992v4}). This means that there are functors~$\Pi_{0,*}, \Pi_{1,*}\colon \QGr(\mathbb{S}({}_f \mathcal{L}_{\id})) \rightarrow \Qcoh(\mathbb{P}^1)$ with left adjoints denoted by~$\Pi_{0}^*, \Pi_{1}^*$. It is then shown in \cite[theorem~5.1]{1503.03992v4} that one can use these functors to lift the Beilinson exceptional sequence on~$\mathbb{P}^1$ to a full and strong exceptional sequence
\begin{equation}
  \label{equation:bundle-collection}
  \derived^\bounded(\qgr(\mathbb{S}({}_f \mathcal{L}_{\id})))
  =
  \left\langle
    \Pi_{1}^* \mathcal{O}_{\mathbb{P}^1},
    \Pi_{1}^*\mathcal{O}_{\mathbb{P}^1}(1),
    \Pi_{0}^*\mathcal{O}_{\mathbb{P}^1},
    \Pi_{0}^*\mathcal{O}_{\mathbb{P}^1}(1)
  \right\rangle
\end{equation}
in~$\derived^\bounded(\qgr(\mathbb{S}({}_f \mathcal{L}_{\id}))$. Moreover there is an explicit formula (\cite[theorem~4.1]{1503.03992v4}) for computing~$\Ext$\dash spaces of the form~$\Ext^n(\Pi^*_i \mathcal{F}, \Pi^*_j \mathcal{G})$. In particular for~$\mathcal{L} = \mathcal{O}_{\mathbb{P}^1}$ one finds that the Gram matrix for the above exceptional sequence is given by
\begin{equation}
  \begin{pmatrix}
    1 & 2 & 4 & 5 \\
    0 & 1 & 0 & 1 \\
    0 & 0 & 1 & 2 \\
    0 & 0 & 0 & 1 \\
  \end{pmatrix}.
\end{equation}
It was shown in~\cite[\S6]{1607.04246v1} that~$\mathrm{B}_2$ is mutation equivalent to this matrix.

\subsection{Graded algebras with symmetric relations}
\label{subsection:symmetric-relations-clifford-comparison}
Let~$C$ be a commutative ring in which 2 is invertible. Let~$F$ be a finitely generated, projective~$C$\dash module and let~$R$ be a direct summand of~$\Sym_C^2(F)$. As we assumed~$2 \in C$ to be invertible, we can use the isomorphism in \eqref{equation:comparison-of-sym} to view~$R$ as a submodule of~$\Sym_2(F) \subset \tensor_C^2(F)$. As such we can consider the algebra~$\tensor_CF / (R)$. The goal of this section is to show that this algebra is isomorphic to a Clifford algebra.

We construct the Clifford algebra~$\clifford_A(E,q)$ as in \cref{definition:clifford}. For this let~$Q = \Sym_C^2(F)/ R$,~$A = \Sym_C Q$ and~$E = F \otimes_C A$. The quotient map~$\tilde{q}\colon\Sym_C^2(F)\rightarrow Q$ induces a map:
\begin{equation}
  q\colon \Sym_A^2(E) = \Sym_C^2(F) \otimes_C A \rightarrow Q \otimes_C A \overset{\mu}{\rightarrow} A
\end{equation}
where~$\mu$ is given by multiplication in~$A$. Moreover~$A$ can be considered as a~$\mathbb{Z}$\dash graded algebra by giving~$Q$ degree 2.

\begin{remark}
  \label{remark:exterior-product}
  Giving~$A$ degree~0 and~$E$ degree 1 equips~$\clifford_A(E,q)$ with a filtration such that
  \begin{equation}
     \gr^\bullet \clifford_A(E,q)\cong\bigwedge\nolimits^\bullet E
  \end{equation}
  as graded~$A$\dash modules.
\end{remark}

\begin{lemma}
  \label{lemma:symmetric-relations-to-clifford}
  With the above notation there exists a morphism
  \begin{equation}
    \label{equation:symmetric-relations-to-clifford}
    \varphi\colon\tensor_C(F)/(R)\rightarrow \clifford_A(E,q).
  \end{equation}

  \begin{proof}
    The inclusions~$C \hookrightarrow A$ and~$F \hookrightarrow E$ induce a morphism
    \begin{equation}
      \tensor_C(F) \rightarrow \tensor_A(E) \rightarrow  \clifford_A(E,q).
    \end{equation}
    It hence suffices to show that this morphism factors through~$\tensor_C(F)/(R)$, i.e.~that the above map sends~$R$ to~$0$. For this note that~$R$ is generated by elements of the form~$v \otimes w + w \otimes v$. The image of such an element in~$\clifford_A(E,q)$ is given by~$q(vw)$. And~$q(vw)=0$ for~$v \otimes w + w \otimes v \in R$ because~$q$ was obtained as an~$A$\dash linear extension of the quotient map~$\tilde{q}$.
  \end{proof}
\end{lemma}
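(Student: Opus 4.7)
The natural plan is to build the map out of the universal property of the tensor algebra and then check it descends to the quotient. Concretely, I would begin with the two obvious structural inclusions: the unit $C\hookrightarrow A=\Sym_C Q$ in degree~$0$, and the embedding $F\hookrightarrow F\otimes_C A=E$ sending $v\mapsto v\otimes 1$. Composing the latter with $E\hookrightarrow\tensor_A(E)\twoheadrightarrow\clifford_A(E,q)$ gives a $C$\dash linear map $F\to\clifford_A(E,q)$, and since $\clifford_A(E,q)$ is an associative $A$\dash algebra (hence a $C$\dash algebra), the universal property of the tensor algebra extends this uniquely to a $C$\dash algebra homomorphism
\[
\Phi\colon\tensor_C(F)\longrightarrow\clifford_A(E,q).
\]

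The task then reduces to showing $\Phi$ kills the two-sided ideal generated by $R$; for this it is enough to show $\Phi(R)=0$, since $\Phi$ is an algebra map. Here the isomorphism \eqref{equation:comparison-of-sym} is essential: it lets us write each element of $R\subset\Sym_C^2(F)$, when viewed inside $\tensor_C^2(F)$, as a sum of symmetrised tensors $v\otimes w+w\otimes v$. Under $\Phi$ such a tensor is sent to
\[
(v\otimes 1)\otimes_A(w\otimes 1)+(w\otimes 1)\otimes_A(v\otimes 1)\;\in\;\clifford_A(E,q),
\]
which by the defining Clifford relation equals $q\bigl((v\otimes 1)(w\otimes 1)\bigr)$. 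Unwinding the definition of $q$ as the composition $\Sym_A^2(E)=\Sym_C^2(F)\otimes_C A\xrightarrow{\tilde{q}\otimes\id}Q\otimes_C A\xrightarrow{\mu}A$, the output is $\tilde{q}(vw)\cdot 1$, which vanishes precisely because $vw\in R=\ker\tilde{q}$.

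There is essentially no genuine obstacle here; the only point that demands a little care is the bookkeeping around the identification $\Sym_C^2(F)\cong\Sym_{2,C}(F)\subset\tensor_C^2(F)$ (which uses $2\in C^\times$), so that the abstractly given submodule $R$ of symmetric powers can be identified with a submodule of symmetric two-tensors and compared against the Clifford relation, which is naturally phrased in terms of $v\otimes w+w\otimes v$. Once this identification is installed, the fact that $q$ was constructed as an $A$\dash linear extension of $\tilde{q}$ makes the vanishing on $R$ automatic, and $\Phi$ descends to the desired morphism $\varphi\colon\tensor_C(F)/(R)\to\clifford_A(E,q)$.
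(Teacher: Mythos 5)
Your proposal is correct and follows essentially the same route as the paper's own proof: define $\Phi$ from the inclusions $C\hookrightarrow A$, $F\hookrightarrow E$ via the universal property of the tensor algebra, then check $\Phi(R)=0$ by sending a symmetrised generator $v\otimes w+w\otimes v$ to $q(vw)$, which vanishes because $q$ is the $A$\dash linear extension of $\tilde{q}$ and $R=\ker\tilde{q}$. You are somewhat more explicit about unwinding $q$ as the composite $\Sym_C^2(F)\otimes_C A\to Q\otimes_C A\xrightarrow{\mu}A$ and about the role of $2\in C^\times$ in identifying $R$ with symmetric two-tensors, but the underlying argument is the same.
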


\begin{proposition}
  \label{proposition:map-to-clifford-is-iso} 
  The morphism \eqref{equation:symmetric-relations-to-clifford} is always an epimorphism. 

  \begin{proof}
    It suffices to show that~$A \subset \clifford_A(E,q)$ and~$E \subset \clifford_A(E,q)$ lie in the image of~$\varphi$. By construction~$C$ and~$F$ lie in the image of~$\varphi$ and~$E = F \otimes_C A$. In order to prove surjectivity of~$\varphi$, we hence only need to show that~$A$ lies in the image of~$\varphi$. Finally, as~$A$ is~$\mathbb{N}$\dash graded and generated in degree~2, it suffices to show that~$A_0$ and~$A_2$ lie in the image of~$\varphi$. For~$A_0$ this is obvious as~$A_0 = C$. For~$A_2$ this follows by noticing that~$A_2 = q(F \otimes F)$.


  \end{proof}
\end{proposition}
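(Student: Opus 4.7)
My plan is to show that both the subalgebra $A$ and the submodule $E$ of $\clifford_A(E,q)$ lie in the image of $\varphi$, which suffices for surjectivity because $\clifford_A(E,q)$ is generated as a ring by $A$ together with $E$ (it is a quotient of $\tensor_A(E)$).

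I would first reduce to showing that $A$ lies in the image. The generators $F \subset \tensor_C(F)$ map under $\varphi$ into $F \subset E \subset \clifford_A(E,q)$, so $F$ is certainly in the image. Since $E = F \otimes_C A$ and the image of $\varphi$ is closed under the ring operations, once $A$ is shown to lie in the image we recover all of $E$ as $A$-linear combinations of elements of $F$. Moreover $A = \Sym_C Q$ is generated as a $C$-algebra by its graded pieces $A_0 = C$ and $A_2 = Q$, so the problem boils down to checking that these two pieces are hit by $\varphi$.

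The piece $A_0 = C$ is immediate, since $C \subset \tensor_C(F)/(R)$ maps to $C \subset A$. For $A_2 = Q$, the key is the defining Clifford relation: for any $v,w \in F$ the element $v \otimes w + w \otimes v$ of $\tensor_C(F)/(R)$ is sent by $\varphi$ to $v \otimes w + w \otimes v$ in $\clifford_A(E,q)$, which equals $q(v \cdot w) = \tilde{q}(v \cdot w) \in Q \subseteq A$ by the Clifford relation together with the construction of $q$ as the $A$-linear extension of $\tilde{q}$. Since $Q = \Sym_C^2(F)/R$ is generated as a $C$-module by the classes of symmetric products $v \cdot w$ with $v,w \in F$, this produces all of $A_2 = Q$.

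I do not anticipate any serious obstacle; the argument is essentially an unwinding of definitions. The one mildly subtle point is recognising that the Clifford relation applied to pairs of elements of $F$ produces precisely the generators of $A_2 = Q$, so that the ``coefficient'' ring $A$ is captured through the defining relations of the target algebra rather than via any explicit lift. The harder question of injectivity of $\varphi$ is of course a separate matter, presumably requiring a filtration argument and the comparison of associated gradeds with $\bigwedge^\bullet E$ foreshadowed in \cref{remark:exterior-product}.
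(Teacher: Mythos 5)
Your argument is essentially identical to the paper's: reduce to showing $A$ lies in the image, use that $A$ is generated by $A_0 = C$ and $A_2 = Q$, and recover $A_2$ from the Clifford relation $v\otimes w + w\otimes v = q(vw)$ applied to pairs in $F$. The only difference is that you spell out the last step a bit more explicitly than the paper's terse ``$A_2 = q(F\otimes F)$.''
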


\subsection{Generalised preprojective algebras}
\label{subsection:veronese-clifford-comparison}
In this section we introduce generalized preprojective algebras as in \cite{MR3565443}. First we introduce some short hand notation.
\begin{definition}
  Let~$C$ be a commutative ring and~$D$ a commutative~$C$\dash algebra, then we define the tensor algebra~$\tensor(D/C)$ via
  \begin{equation}
    \tensor(D/C) \coloneqq \tensor_{C \oplus D}( {}_CD_D \oplus {}_DD_C)
  \end{equation}
  where~${}_CD_D$ is a~$C \oplus D$-bimodule by letting~$C$ act on the left,~$D$ act on the right and all other actions being trivial. Similarly~${}_DD_C$ is a~$C \oplus D$-bimodule. In particular~${}_CD_D \otimes_{C \oplus D} {}_CD_D = {}_DD_C \otimes_{C \oplus D} {}_DD_C = 0$.
\end{definition}

\begin{definition}
  Let~$C$ be a commutative ring and~$D$ a commutative~$C$\dash algebra. Then~$D/C$ is called a \emph{relative Frobenius extension of rank~$n$} if the following conditions are satified:
  \begin{itemize}
    \item $D$ is a free~$C$\dash module of rank~$n$,
    \item there exists an isomorphism~$\varphi\colon\Hom_C(D,C) \rightarrow D$ of~$D$\dash (bi)modules.
  \end{itemize}
\end{definition}

\begin{remark}
  As mentioned in \cite[remark~1.2]{1503.03992v4} one can also define relative Frobenius pairs~$D/C$ where~$D$ is projective over~$C$. Most of the results in op.~cit.~can be lifted to the projective setting by suitably localizing~$C$. As we will only need the case where~$C$ is a local ring, this extra generality is not necessary for our purposes.
\end{remark}

\begin{definition}
  \label{definition:generalized-preprojective}
  Let~$D/C$ be a relative Frobenius extension of rank~$n$ and let~$\varphi$ denote the isomorphism~$\Hom_C(D,C) \cong D$. The \emph{generalized preprojective algebra}~$\Pi_C(D)$ is defined as the quotient of~$\tensor(D/C)$ with the relations given by the images of the following morphisms
  \begin{itemize}
    \item the structure morphism~$i\colon C \rightarrow {}_CD_C = {}_CD_D \otimes_DD_C$
    \item the~$D$\dash bimodule morphism~$r\colon D \rightarrow {}_DD_C \otimes_CD_D: 1_D \mapsto r_{\varphi}$, where~$r_{\varphi}$ is the element in~${}_DD_C \otimes_CD_D$ corresponding to~$\varphi \in \Hom_D(\Hom_C(D,C),D)$ under the identification
      \begin{equation}
        \Hom_D(\Hom_C(D,C),D) \cong {}_DD_C \otimes_CD_D.
      \end{equation}
  \end{itemize}
\end{definition}

\begin{remark} \label{rem:dual-bases}
  It was mentioned in \cite[definition~1.3 and remark~1.4]{MR3565443} that the morphism~$r$ as above can also be described in a different way. For this one fixes a morphism~$\Lambda\colon C \rightarrow D$ which generates~$\Hom_C(D,C)$ as a~$D$\dash module, i.e.~$\Hom_C(D,C) = \Lambda D$. Every basis~$\{e_1, \ldots , e_n \}$ for~$D$ as a~$C$\dash module admits a dual basis~$\{f_1, \ldots , f_n \}$ for~$D$ in the sense that
  \begin{equation}
    \Lambda(e_i f_j) = \delta_{i,j}.
  \end{equation}
  The element~$r_{\varphi} \in {}_DD_C \otimes_CD_D$ is then given by~$\sum_{i=1}^n e_i \otimes f_i$. The fact that this defines a~$D$\dash bimodule morphism follows from~\cite[lemma~3.24]{1503.03992v4}.
\end{remark}

\begin{definition}
  Let~$D/C$ be a relative Frobenius extension of rank~$n$ and let~$\Pi_C(D)$ be as above. Then we define
  \begin{equation}
    \label{equation:definition-HDC}
    \mathrm{H}(D/C) \coloneqq \left( (1_C,0) \Pi_C(D) (1_C,0) \right)^{(2)}
  \end{equation}
  It is immediate that~$\mathrm{H}(D/C)$ can be written as a quotient of~$\tensor_C(D/i(C))$.
\end{definition}

\begin{lemma}
  \label{lemma:veronese-to-clifford}
  There exists a Clifford algebra~$\clifford_A(E,q)$ and a surjective morphism
  \begin{equation}
    \label{equation:veronese-to-clifford}
    \varphi\colon\mathrm{H}(D/C)\to\clifford_A(E,q).
  \end{equation}

  \begin{proof}
    By \cref{proposition:map-to-clifford-is-iso} it suffices to prove that the relations in~$\mathrm{H}(D/C)$ as a quotient of~$\tensor_C(D/i(C))$ are symmetric. Note that
    \begin{equation}
      \label{equation:definition-HDC2}
      \mathrm{H}(D/C) = \left( \tensor_C(D/i(C))\right) / (R)
    \end{equation}
    where~$R$ is the image of
    \begin{equation}
      \label{equation:morphism-r-locally}
      D \overset{r}{\rightarrow} D \otimes_C D \rightarrow D/i(C) \otimes_C D/i(C)
    \end{equation}
    We now make the following three observations:
    \begin{itemize}
      \item The image of~$r$ is described by~$r_\varphi$, which can be written as~$\sum_{i=1}^n e_i \otimes f_i$ where~$\{e_1, \ldots , e_n \}$ is a basis for~$D$ as a~$C$\dash module and~$\{f_1, \ldots , f_n \}$ is a basis dual to this one.
      \item As~$r_\varphi$ can be defined without the choice of a basis for~$D$ as~$C$\dash module, it does not depend on the choice of basis~$\{e_1, \ldots , e_n \}$.
      \item If~$\Lambda(e_i f_j) = \delta_{i,j}$ then~$\Lambda(f_i e_j) = \delta_{i,j}$ as well. In particular if~$\{f_1, \ldots , f_n \}$ is the dual basis for~$\{e_1, \ldots , e_n \}$, then~$\{e_1, \ldots , e_n \}$ is the dual basis for~$\{f_1, \ldots , f_n\}$ as well.
    \end{itemize}
    These 3 observations together imply
    \begin{equation} \label{equation:ei-and-fi-commute}
      \sum_{i=1}^n e_i \otimes f_i = r_\varphi = \sum_{i=1}^n f_i \otimes e_i
    \end{equation}
    such that~$R \subset \Sym_{C,2}(D/i(C)) \cong \Sym_C^2(D/i(C))$ as required.
  \end{proof}
\end{lemma}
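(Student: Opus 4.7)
The plan is to reduce the statement to \cref{proposition:map-to-clifford-is-iso} by identifying $\mathrm{H}(D/C)$ as a quotient $\tensor_C(F)/(R)$ with $F$ a finitely generated projective $C$\dash module and $R \subset \Sym_C^2(F)$ a direct summand. Setting $F \coloneqq D/i(C)$, the description of $\Pi_C(D)$ in \cref{definition:generalized-preprojective} together with the definition \eqref{equation:definition-HDC} shows that $\mathrm{H}(D/C)$ is a quotient of $\tensor_C(F)$ by the image $R$ of the composition \eqref{equation:morphism-r-locally}. The nontrivial content of the lemma is that this $R$ is contained in $\Sym_{C,2}(F)$, so that \cref{proposition:map-to-clifford-is-iso} produces an auxiliary algebra $A$, a locally free module $E = F \otimes_C A$ and a quadratic form $q$ yielding a surjection $\mathrm{H}(D/C) \twoheadrightarrow \clifford_A(E,q)$.

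To establish that $R$ is symmetric, I would use \cref{rem:dual-bases}. Choosing a basis $\{e_1,\dotsc,e_n\}$ of $D$ over $C$ and a dual basis $\{f_1,\dotsc,f_n\}$ with respect to a Frobenius form $\Lambda$, the canonical element $r_\varphi$ is expressed as $\sum_i e_i \otimes f_i$. The key algebraic observation is that duality with respect to $\Lambda$ is a symmetric relation: $\Lambda(e_i f_j)=\delta_{i,j}$ implies $\Lambda(f_i e_j)=\delta_{i,j}$ since $D$ is commutative, so $\{e_i\}$ is in turn dual to $\{f_i\}$. Because $r_\varphi$ is intrinsically defined (independently of the chosen basis), expanding it using the basis $\{f_1,\dotsc,f_n\}$ (with dual basis $\{e_1,\dotsc,e_n\}$) gives $r_\varphi = \sum_i f_i \otimes e_i$ as well, yielding the identity \eqref{equation:ei-and-fi-commute}. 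Averaging these two expressions shows that $r_\varphi$ lies in the symmetric tensors, hence so does its image $R$ in $F \otimes_C F$.

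Having placed $R$ inside $\Sym_C^2(F)$, I would then check that $R$ is a direct summand (which is automatic in the local setting we care about since $D$ is free of finite rank over $C$), so that \cref{proposition:map-to-clifford-is-iso} applies verbatim. The composition $\tensor_C(F) \twoheadrightarrow \tensor_C(F)/(R) = \mathrm{H}(D/C)$ then factors through the Clifford algebra $\clifford_A(E,q)$ constructed from the quotient $Q = \Sym_C^2(F)/R$, with $A = \Sym_C Q$ and the canonical quadratic form $q$, and the resulting map is surjective by that proposition.

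The main obstacle is the symmetry of $r_\varphi$. Once one recognises that the Frobenius structure forces the roles of a basis and its dual basis to be interchangeable (which fundamentally relies on the commutativity of $D$ and the bimodule character of $\varphi$), the rest of the argument is essentially formal assembly of \cref{proposition:map-to-clifford-is-iso}. In particular, no analysis of relations at higher tensor degrees is needed, since the relations defining $\mathrm{H}(D/C)$ are generated in degree two as a quotient of $\tensor_C(F)$.
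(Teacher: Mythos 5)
Your proposal is correct and follows essentially the same route as the paper: reduce to \cref{proposition:map-to-clifford-is-iso} by showing the defining relations of $\mathrm{H}(D/C)$ are symmetric, using the dual-basis expression $r_\varphi=\sum_i e_i\otimes f_i$ from \cref{rem:dual-bases}, the basis-independence of $r_\varphi$, and the fact that commutativity of $D$ makes $\{e_i\}$ and $\{f_i\}$ mutually dual, whence $r_\varphi=\sum_i f_i\otimes e_i$ and $R\subset\Sym_{C,2}(D/i(C))\cong\Sym_C^2(D/i(C))$. The only cosmetic differences (averaging the two expressions rather than invoking flip-invariance directly, and the aside on $R$ being a direct summand) do not change the argument.
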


\begin{proposition}
  \label{proposition:veronese-to-clifford-iso}
  The morphism in \eqref{equation:veronese-to-clifford} is an isomorphism provided that~$n=4$.

  \begin{proof}
    Surjectivity is immediate by the construction as in \cref{proposition:map-to-clifford-is-iso}. Hence it suffices to prove injectivity.

    We now claim that~$\mathrm{H}(D/C)$ and~$\clifford_A(E,q)$ have a structure as graded~$C$\dash algebra such that~$\gamma$ becomes a graded~$C$\dash algebra morphism and such that~$\mathrm{H}(D/C)_n$ and~$(\clifford_A(E,q))_n$ are free~$C$\dash modules of the same rank for each~$n$. The proposition then follows.

    As~$( 1_C \Pi_C(D))^{(2)} = ( 1_C \Pi_C(D)1_C)^{(2)} = \mathrm{H}(D/C)$ we can use \cite[lemma~3.10]{1503.03992v4} to conclude that~$\mathrm{H}(D/C)$ is a graded~$C$\dash algebra such that~$\mathrm{H}(D/C)_n$ is a free~$C$-module of rank~$n+1$.

    Next note that~$A$ has a structure of a graded~$C$\dash algebra. Moreover, as before, we let~$Q \subset A$ have degree 2. Hence letting~$F \subset E$ have degree 1, the relations
    \begin{equation}
      u \otimes v + v \otimes u = q(uv)
    \end{equation}
    become homogeneous of degree 2 for each~$u, v \in F$ and there is a unique induced grading on~$\clifford_A(E,q)$. Using \cref{remark:exterior-product} we find isomorphisms of~$C$\dash modules:
    \begin{equation}
      \begin{aligned}
        \clifford_A(E,q)_{2n} &\cong A_{2n} \oplus (E \wedge E)_{2n-2}, \\
        \clifford_A(E,q)_{2n+1} &\cong E_{2n} \oplus (E \wedge E \wedge E)_{2n-2}.
      \end{aligned}
    \end{equation}
    As~$E$,~$E \wedge E$ and~$E \wedge E \wedge E$ are free~$A$\dash modules of ranks~3, 3 and 1 respectively, we can calculate the rank of~$\clifford_A(E,q)_{2n(+1)}$ using the rank of~$A_n$. Recall that~$A = \Sym_C(Q)$ and that~$Q$ is a free~$C$\dash module of rank~2. As such~$h_A(2n)=n+1$. We hence find that~$\clifford_A(E,q)_{2n}$ is a free~$C$\dash module of~rank~$n+1 + 3n = 4n+1 = 2(2n)+1$ and that~$\clifford_A(E,q)_{2n+1}$ is a free~$C$\dash module of rank~$3(n+1) + n = 4n+3 = 2(2n+1)+1$.

    Finally it is immediate from the construction that~$\gamma\colon\mathrm{H}(D/C) \rightarrow \clifford_A(E,q)$ is a graded~$C$\dash algebra morphism. The claim and the result follow.
  \end{proof}
\end{proposition}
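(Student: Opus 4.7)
The plan is to refine the surjection $\varphi$ of Lemma~\ref{lemma:veronese-to-clifford} to a morphism of graded $C$\dash algebras between two objects that are free $C$\dash modules in each degree, and then compare Hilbert series degree by degree. Since surjectivity is already in hand by Proposition~\ref{proposition:map-to-clifford-is-iso}, any matching of free $C$\dash ranks in every degree will force $\varphi$ to be an isomorphism. The hypothesis $n=4$ is what will make the two Hilbert series agree.

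First I would grade the left-hand side. Because $\mathrm{H}(D/C)$ is by construction the 2\dash Veronese of the corner $1_C\Pi_C(D)1_C$ of a generalised preprojective algebra, I can directly invoke the structural results of \cite[\S3]{1503.03992v4}, in particular (the appropriate form of) \cite[lemma~3.10]{1503.03992v4}, to deduce that each homogeneous piece $\mathrm{H}(D/C)_n$ is a free $C$\dash module of an explicit rank depending only on $n$.

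Next I would put a compatible grading on $\clifford_A(E,q)$: let $F \subset E$ sit in degree~$1$ and $Q \subset A$ sit in degree~$2$, so that $A = \Sym_C Q$ becomes a graded $C$\dash algebra concentrated in even degrees and the Clifford relations $u \otimes v + v \otimes u = q(uv)$ are homogeneous of degree~$2$. This induces a grading on $\clifford_A(E,q)$ with respect to which $\varphi$ is homogeneous. To compute the Hilbert series I would use the filtration of Remark~\ref{remark:exterior-product}: the associated graded is $\bigwedge^\bullet_A E$, a free $A$\dash module with summands of $A$\dash rank $\binom{\rk_C F}{i}$ in exterior degree~$i$. This is where the hypothesis $n=4$ enters, forcing $\rk_C F = \rk_C(D/i(C)) = 3$ and hence the exterior-rank decomposition $1,3,3,1$; combining with $A$ having $C$\dash rank $k+1$ in degree $2k$ yields an explicit formula for $\dim_C \clifford_A(E,q)_m$ in every degree.

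The main obstacle is the numerical comparison itself, and it is also what pins down the rank-4 hypothesis. For a general Frobenius rank the exterior algebra has $C$\dash growth governed by $2^{\rk_C F}$ whereas the formula for $\mathrm{H}(D/C)_n$ remains linear in $n$, so it is only the arithmetic of the rank-4 case that can make the two Hilbert series coincide; a secondary care point is checking that $\varphi$ really does respect the chosen gradings (in particular that the 2\dash Veronese grading on $\mathrm{H}(D/C)$ matches the grading just put on $\clifford_A(E,q)$, under the identifications $F \cong D/i(C)$ and $Q \cong \Sym_C^2(D/i(C))/R$). Once degree-by-degree agreement has been verified, the conclusion follows from the elementary observation that a surjection between graded free $C$\dash modules of equal rank in each degree is automatically an isomorphism.
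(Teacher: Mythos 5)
Your proposal is correct and follows essentially the same route as the paper: grade both sides (using \cite[lemma~3.10]{1503.03992v4} for $\mathrm{H}(D/C)$ and the exterior-algebra filtration of Remark~\ref{remark:exterior-product} for $\clifford_A(E,q)$ with $F$ in degree~1, $Q$ in degree~2), observe the degree-wise $C$-ranks agree precisely when $\rk_C F = 3$, i.e. $n=4$, and conclude the surjection of free $C$-modules of equal rank is an isomorphism. The only difference is that you explicitly flag the compatibility of the gradings under $\varphi$ as a care point, which the paper's proof passes over with "it is immediate from the construction."
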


For use below we also introduce the following~$\mathbb{Z}$\dash algebra:
\begin{definition}
  \label{definition:Z-gen-preproj}
  Let~$D/C$ be a relative Frobenius extension of rank~$n$. We define the tensor-$\mathbb{Z}$\dash algebra~$\tensor_{\mathbb{Z}}(D/C)$ via
  \begin{equation}
    \tensor_{\mathbb{Z}}(D/C)_{(j,j+1)}
    =
    \begin{cases}
      {}_CD_D & \text{if $j$ is even} \\
      {}_DD_C & \text{if $j$ is odd}
    \end{cases}
  \end{equation}
  and we denote~$\Pi_{\mathbb{Z}}(D/C)$ for the quotient of~$\tensor_{\mathbb{Z}}(D/C)$ with the relations given by the images of~$i$ and~$r$ as in \cref{definition:generalized-preprojective}.
\end{definition}

\begin{remark}
  \label{remark:Pi-and-PiZ}
  The generalized preprojective algebra~$\Pi_D(C)$ is related to the~$\mathbb{Z}$\dash algebra~$\Pi_{\mathbb{Z}}(D/C)$ as follows:
  \begin{itemize}
    \item $\Pi_D(C) = \overline{\Pi_{\mathbb{Z}}(D/C)}$ using the notation of \cite[\S3.3]{1503.03992v4}.
    \item $\widecheck{\mathrm{H}(D/C)} \coloneqq \widecheck{ \big( (1_C,0) \Pi_C(D) (1_C,0) \big)^{(2)}} \cong \Pi_{\mathbb{Z}}(D/C)^{(2)}$
  \end{itemize}
\end{remark}

\subsection{Bimodule \texorpdfstring{$\mathbb{Z}$}{Z}-algebras and twisting}
\label{subsection:twisting}
We start by recalling the definitions of bimodules and sheaf~$\mathbb{Z}$\dash algebras as in~\cite{1503.03992v4,MR2958936}. Whenever we use the notations~$W$,~$X$,~$Y$,~$X_n$, \ldots, these schemes are in fact smooth~$k$\dash varieties.

\begin{definition}
  \label{definition:bimodule}
  A coherent~$X\mhyphen Y$ bimodule~$\mathcal{E}$ is a coherent~$\mathcal{O}_{X\times Y}$\dash module such that the support of~$\mathcal{E}$ is finite over~$X$ and~$Y$. We denote the corresponding category by~$\bimod(X\mhyphen Y)$.

  More generally an~$X\mhyphen Y$\dash bimodule is a quasicoherent~$\mathcal{O}_{X\times Y}$\dash module which is a filtered direct limit of objects in~$\bimod(X\mhyphen Y)$. The category of~$X\mhyphen Y$\dash bimodules is denoted~$\BiMod(X\mhyphen Y)$.

\end{definition}

Most bimodules we encounter in this paper are of the form~${}_u\mathcal{U}_v$ where~$u,v$ are finite maps and~$\mathcal{U}$ is a quasicoherent sheaf on some variety~$W$. More fomally we have the following.
\begin{definition}
  \label{definition:standard-form-bimodule}
  Consider finite morphisms~$u\colon W \rightarrow X$ and~$v\colon W \rightarrow Y$. If~$\mathcal{U} \in \Qcoh(W)$, then we denote~$(u, v)_*\mathcal{U} \in \BiMod(X\mhyphen Y)$ as~${}_u\mathcal{U}_v$. One easily checks that
  \begin{equation}
    \label{equation:pullback-tensor-product}
     - \otimes {}_u\mathcal{U}_v = v_*(u^*(-) \otimes_W \mathcal{U})
   \end{equation}
\end{definition}

Moreover it was shown in \cite{MR2958936} that (under some technical conditions which will always be fulfilled in this paper) every bimodule~$\mathcal{E} \in \bimod(X\mhyphen Y)$ has a unique \emph{right dual}~$\mathcal{E}^*\in\bimod(Y\mhyphen X)$ which is defined by requiring that the functor
\begin{equation}
  - \otimes_Y \mathcal{E}^*\colon \Qcoh(Y) \rightarrow \Qcoh(X)
\end{equation}
is right adjoint to the functor~$- \otimes_X \mathcal{E}$. The dual notion leads to the \emph{left dual}: an object~$^{*}{\mathcal{E}} \in \bimod(Y\mhyphen X)$. By Yoneda's lemma we have
\begin{equation}
  \mathcal{E}=^{*}{(\mathcal{E}^*)}=(^{*}{\mathcal{E}})^*
\end{equation}
Hence one can use the following notation
\begin{equation}
  \mathcal{E}^{*n}
  =
  \begin{cases}
    \mathcal{E}^{\overbrace{*\ldots *}^{n}} & n \geq 0\\
    ^{\overbrace{*\ldots *}^{-n}}{\mathcal{E}} & n<0
  \end{cases}
\end{equation}
and there are unit and counit morphisms
\begin{equation}
  \label{equation:unit-morphism}
  \begin{gathered}
    i_n\colon{}_{\id}(\mathcal{O}_{X_n})_{\id} \rightarrow \mathcal{E}^{*n} \otimes \mathcal{E}^{*n+1} \\
    j_{n}\colon\mathcal{E}^{*n}\otimes \mathcal{E}^{*n-1} \rightarrow {}_{\id}(\mathcal{O}_{X_n})_{\id}
  \end{gathered}
\end{equation}
where~$X_{2n}=X$ and~$X_{2n+1}=Y$ for all~$n$.

\begin{example}
  Let~$f\colon Y \rightarrow X$ be a finite morphism, where~$Y$ is a smooth variety over~$\Spec k$, and let~$\mathcal{E}$ be the bimodule~${}_f ( \mathcal{O}_Y) _{\id}$. One can use the explicit formula for~$\mathcal{E}^*$ as in the discussion following \cite[proposition~3.1.6]{MR2958936} to obtain~$\mathcal{E}^* = {}_{\id} ( \mathcal{O}_Y) _f$. Using \eqref{equation:pullback-tensor-product} we find
  \begin{equation}
    - \otimes_X \mathcal{E} = f^*(-) \textrm{ and } - \otimes_Y \mathcal{E}^* = f_*(-).
  \end{equation}
  As such the adjunction~$- \otimes_X \mathcal{E}\dashv- \otimes_Y \mathcal{E}^*$ is nothing but the usual adjunction~$f^* \dashv  f_*$. Using Grothendieck duality we know that~$f_*$ has a right adjoint given by~$f^!(-) = f^*(-) \otimes \omega_{X/Y}$ because~$f$ is finite and flat \cite{MR2346195}, where~$\omega_{X/Y} \in \Qcoh(Y)$ is defined by
  \begin{equation}
    f_* \omega_{X/Y} = \sheafHom(f_* \mathcal{O}_Y, \mathcal{O}_X).
  \end{equation}
  In particular we find
  \begin{equation}
    \label{equation:upper-shriek}
    \mathcal{E}^{**} = {}_f ( \omega_{X/Y}) _{\id} = \mathcal{E} \otimes \omega_{X/Y}
  \end{equation}
  with associated unit morphism
  \begin{equation}
    \label{equation:upper-shriek-2}
    {}_{\id}(\mathcal{O}_Y)_{\id} \rightarrow \mathcal{E}^{*} \otimes \mathcal{E} \otimes {}_{\id} (\omega_{X/Y})_{\id}
  \end{equation}
  Moreover by induction we have that
  \begin{equation}
    \label{equation:upper-shriek-3}
    \begin{aligned}
      \mathcal{E}^{(2n)*} &= {}_f ( \omega_{X/Y}^n) _{\id} \\
      \mathcal{E}^{(2n+1)*} &= {}_{\id} ( \omega_{X/Y}^{-n}) _f \\
    \end{aligned}
  \end{equation}
\end{example}

The tensor product of~$\mathcal{O}_{W \times X \times Y}$\dash modules induces a tensor product
\begin{equation}
  \BiMod(W\mhyphen X) \otimes  \BiMod(X\mhyphen Y) \rightarrow \BiMod(W\mhyphen Y):(\mathcal{E},\mathcal{F})\mapsto \mathcal{E}\otimes_X \mathcal{F}
\end{equation}
through the formula
\begin{equation}
  \mathcal{E} \otimes \mathcal{F}\coloneqq{\pi_{W\times Y}}_*\big(\pi^*_{W\times X}(\mathcal{E}) \otimes_{W \times X \times Y} \pi^*_{X\times Y} ( \mathcal{F}) \big).
\end{equation}
For each~$\mathcal{E} \in \BiMod(W\mhyphen X)$ this defines a functor
\begin{equation}
  - \otimes_X \mathcal{E}\colon \Qcoh(W) \rightarrow \Qcoh(X):\mathcal{M}\mapsto  \mathcal{M} \otimes_X \mathcal{E}\coloneqq {\pi_X}_*\big(\pi^*_W(\mathcal{M}) \otimes_{W \times X} \mathcal{E} \big)
\end{equation}
analogous to the notion of a Fourier--Mukai transform, which is right exact in general and exact if~$\mathcal{E}$ is locally free on the left. We mention that \cite[lemma 3.1.1]{MR2958936} shows that this functor determines the bimodule~$\mathcal{E}$ uniquely. As such the category~$\BiMod(W\mhyphen X)$ is embedded in the more abstract categories~$\Bimod(W\mhyphen X)$ and~$\BIMOD(W\mhyphen X)$ as in op.~cit.

The above tensor product turns~$\BiMod(X\mhyphen X)$ into a monoidal category. As such it is possible to define algebra objects in this category. More general one can construct~$\mathbb{Z}$\dash algebra objects with respect to a collection of categories~$\BiMod(X_i\mhyphen X_j)$ as follows.
\begin{definition}
  Let~$(X_i)_{i\in \mathbb{Z}}$ be a sequence of smooth varieties. A \emph{sheaf~$\mathbb{Z}$\dash algebra~$\mathcal{A}$} is a collection of~$X_i\mhyphen X_j$\dash bimodules~$\mathcal{A}_{i,j}$  equipped with multiplication and identity maps
  \begin{equation}
    \begin{gathered}
      \mu_{i,j,k}\colon\mathcal{A}_{i,j}\otimes \mathcal{A}_{j,k} \longrightarrow \mathcal{A}_{i,k} \\
      u_i\colon\mathcal{O}_{X_i}\longrightarrow \mathcal{A}_{i,i}
    \end{gathered}
  \end{equation}
  such that the associativity
  \begin{equation}
    \begin{tikzcd}
      \mathcal{A}_{i,j}\otimes \mathcal{A}_{j,k}\otimes{A}_{k,l} \arrow{r}{\mu_{i,j,k}\otimes 1} \arrow[swap]{d}{1\otimes \mu_{j,k,l}} & \mathcal{A}_{i,k}\otimes \mathcal{A}_{k,l} \arrow{d}{\mu_{i,k,l}} \\
      \mathcal{A}_{i,j}\otimes \mathcal{A}_{j,l} \arrow{r}{\mu_{i,j,l}} & \mathcal{A}_{i,l}
    \end{tikzcd}
  \end{equation}
  and unit diagrams
  \begin{equation}
    \begin{tikzcd}[column sep = small]
      \mathcal{O}_{X_i}\otimes\mathcal{A}_{i,j} \arrow{rr}{u_i\otimes 1} \arrow{dr} & & \mathcal{A}_{i,i}\otimes\mathcal{A}_{i,j} \arrow{dl}{\mu_{i,i,j}} & \mathcal{A}_{i,j}\otimes\mathcal{O}_{X_i} \arrow{rr}{1\otimes u_i} \arrow{dr} & & \mathcal{A}_{i,j}\otimes \mathcal{A}_{j,j} \arrow{dl}{\mu_{i,j,j}} \\
      & \mathcal{A}_{i,j} & & & \mathcal{A}_{i,j}
    \end{tikzcd}
  \end{equation}
  commute.

  A \emph{graded~$\mathcal{A}$\dash module} is a sequence of quasi-coherent~$\mathcal{O}_{X_i}$\dash modules~$\mathcal{M}_i$ together with maps
  \begin{equation}
    \mu_{\mathcal{M},i,j}\colon\mathcal{M}_i \otimes \mathcal{A}_{i,j} \longrightarrow \mathcal{M}_j
  \end{equation}
  compatible with the multiplication and identity maps on~$\mathcal{A}$ in the usual sense.

  A \emph{morphism of graded~$\mathcal{A}$\dash modules}~$f\colon\mathcal{M}\longrightarrow \mathcal{N}$ is a collection of~$X_i$\dash module morphisms~$f_i\colon\mathcal{M}_i\longrightarrow \mathcal{N}_i$ such that the obvious diagrams commute. The associated category is denoted~$\Gr(\mathcal{A})$.

  An~$\mathcal{A}$\dash module is \emph{right bounded} if~$\mathcal{M}_i = 0$ for~$i \gg 0$. An~$\mathcal{A}$\dash module is called \emph{torsion} if it is a filtered colimit of right bounded modules. Let~$\Tors(\mathcal{A})$ be the subcategory of~$\Gr(\mathcal{A})$ consisting of torsion modules. If~$\Gr(\mathcal{A})$ is a locally noetherian category (which is always the case for our applications, see for example \cite[theorem~3.1]{1503.03992v4}), then~$\Tors(\mathcal{A})$ is a localizing subcategory and the corresponding quotient category is denoted by~$\QGr(\mathcal{A})$.
 \end{definition}

\begin{remark}
  \label{remark:graded-sheaf-Z-algebra}
  Let~$\mathcal{R}$ be a sheaf of graded algebras on~$X$, then~$\mathcal{R}$ induces a sheaf\dash $\mathbb{Z}$\dash algebra~$\check{\mathcal{R}}$ on~$(X_i)_{i\in \mathbb{Z}}$ with~$X_i=X$ for all~$i$ via
  \begin{equation}
    \check{\mathcal{R}}_{i,j} = \mathcal{R}_{j-i}.
  \end{equation}
  It is well known that there are induced equivalences of categories
  \begin{equation}
    \Gr(\check{\mathcal{R}}) \cong \Gr(\mathcal{R}) \textrm{ and } \QGr(\check{\mathcal{R}}) \cong \QGr(\mathcal{R}).
  \end{equation}
\end{remark}

Of particular interest are so-called \emph{symmetric} sheaf\dash $\mathbb{Z}$\dash algebras, which are defined in \cite{MR2958936} and \cite{1503.03992v4}.

\begin{definition}
  Let~$X$ and~$Y$ be smooth varieties over~$k$ and let~$\mathcal{E}$ be an~$X\mhyphen Y$\dash bimodules for which all duals~$\mathcal{E}^{n*}$ exist. Then the \emph{tensor sheaf~$\mathbb{Z}$\dash algebra}~$\tensor(\mathcal{E})$ is the sheaf\dash $\mathbb{Z}$\dash algebra generated by the~$\mathcal{E}^{n*}$, more precisely
  \begin{equation}
    \tensor(\mathcal{E})_{m,n}
    =
    \begin{cases}
      0 & n<m \\
      {}_{\id} \big( \mathcal{O}_{X} \big)_{\id} & \textrm{ if $n=m$ is even}\\
      {}_{\id} \big( \mathcal{O}_{Y} \big)_{\id} & \textrm{ if $n=m$ is odd}\\
      \mathcal{E}^{m*} \otimes \ldots \otimes \mathcal{E}^{(n-1)*} & n>m\\
    \end{cases}
  \end{equation}
  The \emph{symmetric sheaf\dash$\mathbb{Z}$\dash algebra}~$\mathbb{S}(\mathcal{E})$ is the quotient of~$\tensor(\mathcal{E})$ by the relations defined by \eqref{equation:unit-morphism} above. More precisely,~$\mathbb{S}(\mathcal{E})_{m,n}$ is defined as
  \begin{equation}
    \begin{cases}
      \tensor(\mathcal{E}_i)_{m,n} & n \leq m+1\\
      \tensor(\mathcal{E})_{m,n} / \Big(\im(i_m)\otimes\ldots) + (\mathcal{E}^{m*}\otimes \im(i_{m+1}) \otimes \ldots)+\ldots + (\ldots \otimes \im(i_{m-2}))\Big) & n \geq m+2.
    \end{cases}
  \end{equation}
\end{definition}

A fundamental operation in the context of sheaf\dash$\mathbb{Z}$\dash algebras is that of twisting by a sequence of invertible bimodules. Invertible bimodules are defined in \cite{1503.03992v4} or \cite{MR2958936}, for our applications it suffices to realize that bimodules of the form~${}_{\id} \mathcal{L} _{\id}$ are invertible. As the next theorem shows, this operation induces equivalences of categories at the level of~$\Gr$ and~$\QGr$.
\begin{theorem} (see for example \cite[theorem~2.14]{1503.03992v4})
  \label{theorem:twisting}
  Let~$(X_i)_i$ and~$(Y_i)_i$ be sequences of smooth varieties over~$k$ and~$\mathcal{A}$ a sheaf\dash$\mathbb{Z}$\dash algebra on~$(X_i)_i$. Given a collection of invertible~$X_i\mhyphen Y_i$\dash bimodules~$(\mathcal{T}_i)_i$ one can construct a sheaf\dash$\mathbb{Z}$\dash algebra~$\mathcal{B}$ by
  \begin{equation}
    \label{equation:bundles-twist-definition}
    \mathcal{B}_{i,j}\coloneqq\mathcal{T}_i^{-1} \otimes \mathcal{A}_{i,j}\otimes \mathcal{T}_j
  \end{equation}
  called the \emph{twist} of~$\mathcal{A}$ by~$(\mathcal{T}_i)_i$.

  There is an equivalence of categories given by the functor
  \begin{equation}
    \mathcal{T}\colon \Gr(\mathcal{A})\cong \Gr(\mathcal{B}): \mathcal{M}_i \longrightarrow \mathcal{M}_i \otimes \mathcal{T}_i.
  \end{equation}
  If moreover~$\mathcal{A}$ and~$\mathcal{B}$ are noetherian, then this also induces an equivalence at the level of~$\QGr$
\end{theorem}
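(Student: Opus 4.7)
The plan is to verify the theorem in three stages: first construct the algebra structure on~$\mathcal{B}$, then upgrade the assignment~$\mathcal{M}_i \mapsto \mathcal{M}_i \otimes \mathcal{T}_i$ to a functor admitting a quasi-inverse, and finally check compatibility with the torsion subcategory for the~$\QGr$ statement. Throughout, the key point is that~$\mathcal{T}_i$ is invertible, so that~$\mathcal{T}_i \otimes \mathcal{T}_i^{-1} \cong {}_{\id}(\mathcal{O}_{X_i})_{\id}$ and~$\mathcal{T}_i^{-1} \otimes \mathcal{T}_i \cong {}_{\id}(\mathcal{O}_{Y_i})_{\id}$ canonically, so that insertions and deletions of these cancelling factors produce natural isomorphisms between tensor products of bimodules.

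For the algebra structure, I would define the multiplication~$\mu^{\mathcal{B}}_{i,j,k}$ by rewriting
\begin{equation*}
  \mathcal{B}_{i,j}\otimes\mathcal{B}_{j,k} = \mathcal{T}_i^{-1} \otimes \mathcal{A}_{i,j} \otimes (\mathcal{T}_j \otimes \mathcal{T}_j^{-1}) \otimes \mathcal{A}_{j,k} \otimes \mathcal{T}_k \cong \mathcal{T}_i^{-1} \otimes \mathcal{A}_{i,j} \otimes \mathcal{A}_{j,k} \otimes \mathcal{T}_k
\end{equation*}
and then applying~$\id_{\mathcal{T}_i^{-1}}\otimes\mu^{\mathcal{A}}_{i,j,k}\otimes\id_{\mathcal{T}_k}$ into~$\mathcal{T}_i^{-1}\otimes\mathcal{A}_{i,k}\otimes\mathcal{T}_k=\mathcal{B}_{i,k}$. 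The unit~$u^{\mathcal{B}}_i$ is the composite of the coevaluation~$\mathcal{O}_{Y_i}\to\mathcal{T}_i^{-1}\otimes\mathcal{T}_i$ with~$\id\otimes u^{\mathcal{A}}_i\otimes\id$. Associativity and unitality for~$\mathcal{B}$ then reduce formally to the corresponding diagrams for~$\mathcal{A}$, since the cancellation isomorphisms~$\mathcal{T}_j\otimes\mathcal{T}_j^{-1}\cong\mathcal{O}_{X_j}$ are natural in all arguments.

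The functor~$\mathcal{T}\colon\Gr(\mathcal{A})\to\Gr(\mathcal{B})$ acts on objects by~$\mathcal{M}_i\mapsto\mathcal{M}_i\otimes\mathcal{T}_i$, with action maps obtained by the same cancellation trick,
\begin{equation*}
  (\mathcal{M}_i\otimes\mathcal{T}_i)\otimes\mathcal{B}_{i,j} \cong \mathcal{M}_i\otimes\mathcal{A}_{i,j}\otimes\mathcal{T}_j \xrightarrow{\mu_{\mathcal{M},i,j}\otimes\id} \mathcal{M}_j\otimes\mathcal{T}_j,
\end{equation*}
and on morphisms by tensoring with~$\id_{\mathcal{T}_i}$. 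The inverse functor is the analogous construction using~$(\mathcal{T}_i^{-1})_i$ applied to~$\mathcal{B}$, which is easily checked to send~$\mathcal{B}$ back to~$\mathcal{A}$ and to be quasi-inverse to~$\mathcal{T}$ via the canonical isomorphisms~$\mathcal{M}_i\otimes\mathcal{T}_i\otimes\mathcal{T}_i^{-1}\cong\mathcal{M}_i$. Morphism-wise, functoriality is automatic because all the natural isomorphisms involved are natural in~$\mathcal{M}$.

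Finally, for the~$\QGr$ statement, observe that the functor~$\mathcal{T}$ preserves right-boundedness on the nose — if~$\mathcal{M}_i=0$ for~$i\gg 0$ then~$\mathcal{M}_i\otimes\mathcal{T}_i=0$ for~$i\gg 0$ — and hence preserves torsion modules, since torsion is defined as filtered colimits of right-bounded modules and~$-\otimes\mathcal{T}_i$ commutes with colimits (being a left adjoint, as~$\mathcal{T}_i$ is invertible). Under the noetherian hypothesis, the torsion subcategory is localizing on each side and~$\mathcal{T}$ therefore descends to an equivalence~$\QGr(\mathcal{A})\cong\QGr(\mathcal{B})$. The main bookkeeping obstacle is keeping track of which variety each tensor factor lives over so that the cancellations~$\mathcal{T}_j\otimes\mathcal{T}_j^{-1}$ appear in the correct middle position of each composition; once this is set up carefully, every verification is a straightforward diagram chase using naturality of the monoidal structure on bimodule categories.
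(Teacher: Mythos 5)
Your proposal is correct and is essentially the standard twisting argument: the paper itself offers no proof of this statement, simply citing \cite[theorem~2.14]{1503.03992v4}. Your construction --- defining the multiplication on $\mathcal{B}$ via the cancellation isomorphisms coming from invertibility of the $\mathcal{T}_i$, exhibiting the quasi-inverse as the twist by $(\mathcal{T}_i^{-1})_i$, and noting that both functors preserve right-bounded (hence torsion) modules so that the equivalence descends to $\QGr$ under the noetherian hypothesis --- is precisely the argument used in the cited reference.
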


We will apply this theorem to the symmetric sheaf\dash $\mathbb{Z}$\dash algebras as defined in \cite[definition~2.13]{1503.03992v4} or \cite[\S 1]{MR2958936}.

\begin{lemma}
  \label{lemma:twist-2}
  Let~$f\colon Y\rightarrow X$ be a finite morphism between smooth projective curves and let~$\mathbb{S}({}_f (\mathcal{O}_Y)_{\id})$ be the associated symmetric sheaf\dash $\mathbb{Z}$\dash algebra. Moreover denote~$\Pi_\mathbb{Z}(Y/X)$ for the sheaf\dash $\mathbb{Z}$\dash algebra (over the collection~$(X_i)_{i \in \mathbb{Z}}$ with~$X_{2j}=X$ and~$X_{2j+1}=Y$) defined as follows:
  \begin{itemize}
    \item $\Pi_{\mathbb{Z}}(Y/X)_{m,n} = 0$ whenever~$m>n$;
    \item $\Pi_{\mathbb{Z}}(Y/X)_{2j,2j} = {}_{\id}( \mathcal{O}_X)_{\id}$ for all~$j$;
    \item $\Pi_{\mathbb{Z}}(Y/X)_{2j+1,2j+1} = {}_{\id}( \mathcal{O}_Y)_{\id}$ for all~$j$;
    \item $\Pi_{\mathbb{Z}}(Y/X)_{2j,2j+1} = {}_f (\mathcal{O}_Y)_{\id}$ for all~$j$;
    \item $\Pi_{\mathbb{Z}}(Y/X)_{2j+1,2j+2} = {}_{\id}( \mathcal{O}_Y)_{f}$ for all~$j$;
    \item $\Pi_{\mathbb{Z}}(Y/X)$ is freely generated by the~$\Pi_{\mathbb{Z}}(Y/X)_{n,n+1}$ subject to the relations
      \begin{equation}
        \begin{aligned}
          {}_{\id}( \mathcal{O}_X)_{\id}&\subset{}_{\id}( f_* \mathcal{O}_Y)_{\id} \\
          &= {}_f( \mathcal{O}_X)_{f}  \\
          &= \Pi_{\mathbb{Z}}(Y/X)_{2j,2j+1} \otimes_Y \Pi_{\mathbb{Z}}(Y/X)_{2j+1,2j+2}
        \end{aligned}
      \end{equation}
      and
      \begin{equation}
        r({}_{\id}( \omega_{f}^{-1})_{\id}) \subset \Pi_{\mathbb{Z}}(Y/X)_{2j-1,2j} \otimes_Y \Pi_{\mathbb{Z}}(Y/X)_{2j,2j+1}
      \end{equation}
      where
      \begin{equation}
        \label{equation:definition-r-global}
        r\colon {}_{\id}( \omega_{f}^{-1})_{\id} \rightarrow {}_{\id}( \mathcal{O}_Y)_{f} \otimes {}_f (\mathcal{O}_Y)_{\id}
      \end{equation}
      is induced by~$f_* \dashv f^! = f^* \otimes \omega_{X/Y}$ as in \eqref{equation:upper-shriek-2}.
  \end{itemize}
    Then~$\Pi_\mathbb{Z}(Y/X)$ is a twist of~$\mathbb{S}({}_f (\mathcal{O}_Y)_{\id})$.
\end{lemma}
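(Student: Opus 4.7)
The plan is to exhibit an explicit twist whose effect is to convert the duals $\mathcal{E}^{n*}$ of $\mathcal{E}={}_f(\mathcal{O}_Y)_{\id}$, which carry powers of $\omega_{X/Y}$ by \eqref{equation:upper-shriek-3}, into the ``uniform'' generators of $\Pi_\mathbb{Z}(Y/X)$. More precisely, we have
\begin{equation*}
\mathbb{S}(\mathcal{E})_{2j,2j+1} = {}_f(\omega_{X/Y}^j)_{\id}, \qquad \mathbb{S}(\mathcal{E})_{2j+1,2j+2} = {}_{\id}(\omega_{X/Y}^{-j})_f,
\end{equation*}
and we want to land on ${}_f(\mathcal{O}_Y)_{\id}$ and ${}_{\id}(\mathcal{O}_Y)_f$ respectively. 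Writing the twist in the form $\mathcal{T}_i = {}_{\id}(\mathcal{L}_i)_{\id}$ for a line bundle $\mathcal{L}_i$ on $X_i$, and computing $\mathcal{T}_i^{-1}\otimes\mathbb{S}(\mathcal{E})_{i,i+1}\otimes\mathcal{T}_{i+1}$ via the formulas for bimodules in standard form (using the projection formula), the requirement that this equal $\Pi_\mathbb{Z}(Y/X)_{i,i+1}$ forces the choice $\mathcal{L}_{2j}=\mathcal{O}_X$ and $\mathcal{L}_{2j+1}=\omega_{X/Y}^{-j}$. This is the twist we propose to use.

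First I would verify that, with this choice, the degree-one pieces match in both parities, and that the identity bimodules are preserved: $\mathcal{T}_i^{-1}\otimes{}_{\id}(\mathcal{O}_{X_i})_{\id}\otimes\mathcal{T}_i = {}_{\id}(\mathcal{O}_{X_i})_{\id}$. Since by \cref{theorem:twisting} the twist preserves multiplication and units at the level of the tensor sheaf-$\mathbb{Z}$-algebra, it then remains to check that the two families of relations agree. The relations of $\mathbb{S}(\mathcal{E})$ are the images of the unit morphisms $i_n\colon {}_{\id}(\mathcal{O}_{X_n})_{\id}\to\mathcal{E}^{n*}\otimes\mathcal{E}^{(n+1)*}$.

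For $n=2j$, the functor corresponding to $\mathcal{E}^{2j*}\otimes\mathcal{E}^{(2j+1)*}$ is $f_*f^*(-) = -\otimes_X f_*\mathcal{O}_Y$, and $i_{2j}$ is the unit $\id\to f_*f^*$, i.e. the inclusion ${}_{\id}(\mathcal{O}_X)_{\id}\hookrightarrow{}_f(\mathcal{O}_Y)_f$. Because $\mathcal{T}_{2j}$ and $\mathcal{T}_{2j+2}$ are both trivial, the twisted relation coincides with the first relation of $\Pi_\mathbb{Z}(Y/X)$. For $n=2j-1$, the functor corresponding to $\mathcal{E}^{(2j-1)*}\otimes\mathcal{E}^{(2j)*}$ is $f^*f_*(-)\otimes\omega_{X/Y}$ and $i_{2j-1}$ is the unit of the Grothendieck duality adjunction $f_*\dashv f^!$, as recorded in \eqref{equation:upper-shriek-2}. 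Twisting on the left by $\mathcal{T}_{2j-1}^{-1}={}_{\id}(\omega_{X/Y}^{j-1})_{\id}$ and on the right by $\mathcal{T}_{2j+1}={}_{\id}(\omega_{X/Y}^{-j})_{\id}$ multiplies the source ${}_{\id}(\mathcal{O}_Y)_{\id}$ by $\omega_{X/Y}^{-1}$ and leaves the target unchanged, which is exactly the morphism $r$ in~\eqref{equation:definition-r-global}.

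Once both generators and relations have been matched, \cref{theorem:twisting} applied to the tensor sheaf-$\mathbb{Z}$-algebra $\tensor(\mathcal{E})$ and passage to the quotient yields the desired identification $\Pi_\mathbb{Z}(Y/X)\cong\mathcal{T}_\bullet^{-1}\otimes\mathbb{S}(\mathcal{E})\otimes\mathcal{T}_\bullet$. The one subtle point is the identification at $n=2j-1$: one must track carefully that the twisted unit of $f_*\dashv f^!$ really is the morphism $r$ defined by dual bases as in \cref{rem:dual-bases}, and not merely a morphism with the same source and target. This is a local check after trivialising $\omega_{X/Y}$, so can be reduced to the computation already performed in \cref{lemma:veronese-to-clifford}.
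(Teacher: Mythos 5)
Your proposal is correct and follows essentially the same route as the paper: the same twist (trivial at even indices, a power of $\omega_{X/Y}$ at odd indices --- yours being the inverse of the paper's only because you twist $\mathbb{S}({}_f(\mathcal{O}_Y)_{\id})$ into $\Pi_\mathbb{Z}(Y/X)$ rather than the other way around), matching of the degree-one pieces, and then matching of the two families of quadratic relations. The only divergence is in the final subtle point you flag: the paper identifies the twisted unit with $r$ abstractly, using that both are unit maps whose underlying bimodules are twists of each other together with $\left({}_{\id}(\omega_{X/Y}^{-j})_{\id}\otimes{}_{\id}(\mathcal{O}_Y)_f\right)^* = \left({}_{\id}(\mathcal{O}_Y)_f\right)^*\otimes{}_{\id}(\omega_{X/Y}^{j})_{\id}$, whereas you propose a local dual-basis verification (for which \cref{rem:dual-bases} and the local description in \cref{lemma:HYX-symmetric} are the more apt references than \cref{lemma:veronese-to-clifford}); either finish is fine.
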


\begin{proof}
  Using \eqref{equation:upper-shriek-3} it follows immediately that
  \begin{equation}
    \left(\mathbb{S}({}_f (\mathcal{O}_Y)_{\id}) \right)_{2j,2j+1} = (\Pi_{\mathbb{Z}}(Y/X))_{2j,2j+1} \otimes {}_{\id}(\omega_{X/Y}^j)_{\id}
  \end{equation}
  and
  \begin{equation}
    \left(\mathbb{S}({}_f (\mathcal{O}_Y)_{\id}) \right)_{2j+1,2j+2} = {}_{\id}(\omega_{X/Y}^{-j})_{\id} \otimes (\Pi_{\mathbb{Z}}(Y/X))_{2j+1,2j+2}.
  \end{equation}

  Now let~$\mathcal{T}_i$ be defined by
  \begin{equation}
    \mathcal{T}_i\coloneqq
    \begin{cases}
      {}_{\id}( \mathcal{O}_X )_{\id} & i=2j \\
      {}_{\id}( \omega_{X/Y}^j )_{\id} & i=2j+1.
    \end{cases}
  \end{equation}
  We then claim that
  \begin{equation}
    \left(\mathbb{S}({}_f (\mathcal{O}_Y)_{\id}) \right)_{m,n} \cong \mathcal{T}_m^{-1} \otimes \Pi_{\mathbb{Z}}(Y/X)_{m,n} \otimes \mathcal{T}_n
  \end{equation}
  holds for all~$m,n$.

  As both algebras are generated in degree 1 and have quadratic relations, it suffices to check the claim for~$n-m = 0,1,2$. By the above the claim holds for~$n-m=0,1$.

  For~$n=m+2=2j+2$ we have
  \begin{equation}
    \left(\mathbb{S}({}_f (\mathcal{O}_Y)_{\id}) \right)_{2j,2j+2} = {}_{\id} \left( f_* \mathcal{O}_Y /  \mathcal{O}_X \right)_{\id} = (\Pi_{\mathbb{Z}}(Y/X))_{2j,2j+2}
  \end{equation}

  To get the claim for~$n=m+2$,~$m=2j+1$ it suffices to check that
  \begin{equation}
    \begin{aligned}
      i_{2j+1} ({}_{\id}( \mathcal{O}_Y)_{\id})
      &={}_{\id}(\omega_{X/Y}^{-j})_{\id} \otimes  r({}_{\id}( \omega_{f}^{-1})_{\id}) \otimes {}_{\id}(\omega_{X/Y}^{j+1})_{\id}  \\
      &={}_{\id}(\omega_{X/Y}^{-j})_{\id} \otimes r({}_{\id}( \omega_{f}^{-1})_{\id}) \otimes {}_{\id}(\omega_{X/Y})_{\id} \otimes {}_{\id}(\omega_{X/Y}^j)_{\id}
    \end{aligned}
  \end{equation}
  This in turn follows by using the fact that $i_{2j+1}$ and $r$ are defined as unit maps, the fact that the bimodules underlying these unit maps are twists of each other and the fact that
  \begin{equation}
     \left({}_{\id}(\omega_{X/Y}^{-j})_{\id} \otimes {}_{\id}(\mathcal{O}_Y)_f \right)^* = \left({}_{\id}(\mathcal{O}_Y)_f \right)^* \otimes {}_{\id}(\omega_{X/Y}^{j})_{\id}.
  \end{equation}
\end{proof}

\begin{remark}
  \label{remark:Pi-ZYX-2-periodic}
  The sheaf\dash$\mathbb{Z}$\dash algebra~$\Pi_\mathbb{Z}(Y/X)$ as introduced above is 2-periodic, i.e.~for each~$i$ and~$j$ there is an isomorphism
  \begin{equation}
    \left(\Pi_\mathbb{Z}(Y/X)\right)_{i,j} \cong \left(\Pi_\mathbb{Z}(Y/X)\right)_{i+2,j+2}
  \end{equation}
  and these isomorphisms are compatible with the multiplication in~$\Pi_\mathbb{Z}(Y/X)$.

  In particular let~$\mathrm{H}(Y/X)$ be the graded sheaf of algebras on~$X$ defined as follows:
  \begin{itemize}
    \item $\mathrm{H}(Y/X)_0 = \mathcal{O}_X$;
    \item $\mathrm{H}(Y/X)_1 = f_* \mathcal{O}_Y / \mathcal{O}_X$;
    \item $\mathrm{H}(Y/X)$ is generated by~$\mathrm{H}(Y/X)_1$ subject to the relation
      \begin{equation}
        f_* \omega_{X/Y}^{-1} \subset \mathrm{H}(Y/X)_1 \otimes \mathrm{H}(Y/X)_1 =  f_* \mathcal{O}_Y / \mathcal{O}_X \otimes f_* \mathcal{O}_Y / \mathcal{O}_X
      \end{equation}
      which is induced by
      \begin{equation}
        \begin{aligned}
          {}_{\id} (f_* \omega_{X/Y}^{-1}) _{\id} & = {}_f (\omega_{X/Y}) _f \\
          &= {}_f (\mathcal{O}_Y)_{\id} \otimes {}_{\id} ( \omega_{X/Y}^{-1}) _{\id} \otimes {}_{\id}(\mathcal{O}_Y) _f \\
          &\overset{r}{\rightarrow} {}_f (\mathcal{O}_Y)_{\id} \otimes {}_{\id}(\mathcal{O}_Y) _f \otimes {}_f (\mathcal{O}_Y)_{\id} \otimes {}_{\id}(\mathcal{O}_Y) _f \\
          &= {}_{\id} (f_* \mathcal{O}_Y \otimes f_* \mathcal{O}_Y) _{\id} \\
          &\rightarrow {}_{\id} (f_* \mathcal{O}_Y/ \mathcal{O}_X \otimes f_* \mathcal{O}_Y / \mathcal{O}_X) _{\id};
        \end{aligned}
      \end{equation}
  \end{itemize}
  then there is an isomorphism of sheaf\dash $\mathbb{Z}$\dash algebras:
  \begin{equation}
    \label{equation:2-veronese-PiYX}
    \Pi_\mathbb{Z}(Y/X)^{(2)} \cong \check{H}(Y/X)
  \end{equation}
\end{remark}

Using \cref{theorem:twisting}, 
\cref{lemma:twist-2}, \cref{remark:Pi-ZYX-2-periodic} and \cref{remark:graded-sheaf-Z-algebra} we have equivalences of categories
\begin{equation}
  \label{equation:twisting-equivalent-categories}
  \begin{aligned}
    \QGr(\mathbb{S}({}_f (\mathcal{O}_{\mathbb{P}^1})_{\id}))
    &\cong \QGr(\Pi_{\mathbb{Z}}(\mathbb{P}^1/\mathbb{P}^1)) \\
    &\cong \QGr(\mathrm{H}(\mathbb{P}^1/\mathbb{P}^1)).
  \end{aligned}
\end{equation}

As a last result of this section we mention that the above category is preserved by the~$\PGL_2 \times \PGL_2$\dash action on the set of degree~4~maps~$f\colon\mathbb{P}^1 \rightarrow \mathbb{P}^1$ (where the action is given by coordinate changes of on the domain and codomain).
\begin{lemma}
  Let~$f\colon\mathbb{P}^1 \rightarrow \mathbb{P}^1$ be a finite map and let~$\varphi, \psi \in \Aut(\mathbb{P}^1)$, then
  \begin{equation}
    \QGr(\mathbb{S}({}_f (\mathcal{O}_{\mathbb{P}^1})_{\id})) \cong \QGr(\mathbb{S}({}_{\varphi \circ f \circ \psi} (\mathcal{O}_{\mathbb{P}^1})_{\id})).
  \end{equation}
\end{lemma}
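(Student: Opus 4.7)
The plan is to realize $\mathbb{S}({}_{g}(\mathcal{O}_{\mathbb{P}^1})_{\id})$, with $g \coloneqq \varphi \circ f \circ \psi$, as a twist of $\mathbb{S}({}_f(\mathcal{O}_{\mathbb{P}^1})_{\id})$ in the sense of \cref{theorem:twisting}, via a sequence of invertible bimodules manufactured from $\varphi$ and $\psi$. Concretely, on the alternating sequence $(X_i)_i$ with $X_{2j}=X_{2j+1}=\mathbb{P}^1$, I would set
\[
\mathcal{T}_{2j} \coloneqq {}_{\id}(\mathcal{O}_{\mathbb{P}^1})_{\varphi}, \qquad \mathcal{T}_{2j+1} \coloneqq {}_{\psi}(\mathcal{O}_{\mathbb{P}^1})_{\id}.
\]
Because $\varphi$ and $\psi$ are automorphisms, these are invertible bimodules with respective inverses ${}_{\varphi}(\mathcal{O}_{\mathbb{P}^1})_{\id}$ and ${}_{\id}(\mathcal{O}_{\mathbb{P}^1})_{\psi}$, so \cref{theorem:twisting} applies.

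First I would check the degree-one pieces of the twist. Using the formulas \eqref{equation:upper-shriek-3} together with the identity $\omega_g \cong \psi^*\omega_f$ for the relative dualizing sheaves (a consequence of the chain rule $g^! = \psi^! f^! \varphi^! = \psi^* \circ f^! \circ \varphi^*$ applied to the isomorphisms $\varphi$, $\psi$, combined with $g^!(\mathcal{O}_{\mathbb{P}^1}) = \omega_g$), a direct computation of pushforwards and pullbacks along $\varphi$, $\psi$, and $f$ yields
\[
\mathcal{T}_{2j}^{-1} \otimes {}_f(\omega_f^{\otimes j})_{\id} \otimes \mathcal{T}_{2j+1} \;\cong\; {}_g(\omega_g^{\otimes j})_{\id},
\]
agreeing with $(\mathbb{S}({}_g(\mathcal{O}_{\mathbb{P}^1})_{\id}))_{2j,2j+1}$. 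The analogous identity for the odd-to-even components is verified in the same way.

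Next I would propagate this to all of the $\mathbb{Z}$-algebra. Since $\mathbb{S}(\mathcal{E})$ is generated in degree one by the duals $\mathcal{E}^{n*}$ subject to the quadratic relations coming from the unit morphisms $i_n$ of \eqref{equation:unit-morphism}, and since tensoring with an invertible bimodule on either side commutes with the formation of bimodule duals and with the unit/counit of the corresponding adjunction, conjugation by $(\mathcal{T}_i)_i$ transports the unit morphisms for $f$ onto those for $g$. This identifies the twist $\mathcal{B}_{i,j} = \mathcal{T}_i^{-1} \otimes (\mathbb{S}({}_f(\mathcal{O}_{\mathbb{P}^1})_{\id}))_{i,j} \otimes \mathcal{T}_j$ with $\mathbb{S}({}_g(\mathcal{O}_{\mathbb{P}^1})_{\id})$ as sheaf-$\mathbb{Z}$-algebras, and then \cref{theorem:twisting} furnishes the desired equivalence of $\QGr$-categories.

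The main obstacle is the bookkeeping around the unit morphisms: one must verify that the naturality of Grothendieck duality under precomposition and postcomposition by the isomorphisms $\psi$ and $\varphi$ is strong enough to identify the unit morphisms $i_n^{(f)}$ and $i_n^{(g)}$ (and not merely the duals $\mathcal{E}^{n*}$ and $(\mathcal{E}')^{n*}$ individually). This reduces to the compatibility of the trace map associated to $f$ with the one associated to $g$ under the isomorphism $\omega_g \cong \psi^*\omega_f$, after which everything else is a formal manipulation with invertible bimodules.
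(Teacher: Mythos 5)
Your proposal is correct and follows essentially the same route as the paper: the same twist sequence $\mathcal{T}_{2j}={}_{\id}(\mathcal{O}_{\mathbb{P}^1})_{\varphi}$, $\mathcal{T}_{2j+1}={}_{\psi}(\mathcal{O}_{\mathbb{P}^1})_{\id}$, the same verification of the degree-one components via $\omega_{\varphi\circ f\circ\psi}\cong\psi^*\omega_{X/Y}$, and an appeal to \cref{theorem:twisting}. The compatibility of the twist with the quadratic relations, which you sketch via naturality of the unit/trace maps, is exactly the step the paper leaves as an exercise to the reader.
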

\begin{proof}
  By \cref{theorem:twisting} it suffices to show that~$\mathbb{S}({}_f (\mathcal{O}_{\mathbb{P}^1})_{\id})$ and~$\mathbb{S}({}_{\varphi \circ f \circ \psi} (\mathcal{O}_{\mathbb{P}^1})_{\id})$ are twists of each other. We claim that
  \begin{equation}
    \left(\mathbb{S}({}_{\varphi \circ f \circ \psi} (\mathcal{O}_{\mathbb{P}^1})_{\id})\right)_{i,j} \cong \mathcal{T}_i^{-1} \otimes \left(\mathbb{S}({}_f (\mathcal{O}_{\mathbb{P}^1})_{\id})\right)_{i,j} \otimes \mathcal{T}_j.
  \end{equation}
  holds for all~$i$ and~$j$ if we choose
  \begin{equation}
    \mathcal{T}_i\coloneqq
    \begin{cases}
      {}_{\id} \mathcal{O}_\varphi & \textrm{ if $i$ is even} \\
      {}_\psi \mathcal{O}_{\id} & \textrm{if $i$ is odd}
    \end{cases}.
  \end{equation}
  For example we have that
  \begin{equation}
    \begin{aligned}
      \left(\mathbb{S}({}_{\varphi \circ f \circ \psi} (\mathcal{O}_{\mathbb{P}^1})_{\id})\right)_{2,3} & = {}_{\varphi \circ f \circ \psi} \left(\omega_{\varphi \circ f \circ \psi}\right)_{\id} \\
      & \cong {}_{\varphi \circ f \circ \psi} \left(\omega_\psi \otimes \psi^* \omega_{X/Y} \otimes \psi^*f^* \omega_\varphi \right)_{\id} \\
      & \cong {}_{\varphi \circ f \circ \psi} \left(\mathcal{O}_{\mathbb{P}^1} \otimes \psi^* \omega_{X/Y} \otimes \psi^*f^* \mathcal{O}_{\mathbb{P}^1} \right)_{\id} \\
      & \cong {}_{\varphi \circ f \circ \psi} \left(\psi^* \omega_{X/Y} \right)_{\id} \\
      & \cong {}_{\varphi} ( \mathcal{O}_{\mathbb{P}^1})_{\id} \otimes {}_f \left(\omega_{X/Y} \right)_{\id} \otimes {}_\psi ( \mathcal{O}_{\mathbb{P}^1})_{\id}\\
      & \cong \left({}_{\id} ( \mathcal{O}_{\mathbb{P}^1})_{\varphi}\right)^{-1} \otimes \left(\mathbb{S}({}_f (\mathcal{O}_{\mathbb{P}^1})_{\id})\right)_{2,3} \otimes {}_\psi ( \mathcal{O}_{\mathbb{P}^1})_{\id}.
    \end{aligned}
  \end{equation}
  We leave it as an exercise to the reader to check that this twisting is compatible with the quadratic relations of~$\mathbb{S}({}_f (\mathcal{O}_{\mathbb{P}^1})_{\id})$ and~$\mathbb{S}({}_{\varphi \circ f \circ \psi} (\mathcal{O}_{\mathbb{P}^1})_{\id})$.
\end{proof}

\subsection{Symmetric sheaves of graded algebras}
\label{subsection:symmetric-sheaves}
This section is devoted to generalizing \cref{lemma:symmetric-relations-to-clifford}, \cref{proposition:map-to-clifford-is-iso} and \cref{lemma:veronese-to-clifford} to the level of sheaves of algebras. Throughout this section, all schemes are assumed to have~2~invertible, and for ease of statement we will also assume that they are irreducible.

\begin{definition}
  \label{definition:symmetric-sheaves-of-algebras}
  Let~$\mathcal{H} = \mathcal{H}_0 \oplus \mathcal{H}_1 \oplus \ldots$ be a graded sheaf of algebras on a scheme~$X$. We say~$\mathcal{H}$ is a \emph{symmetric sheaf of graded algebras} if the following conditions hold:
  \begin{itemize}
    \item $\mathcal{H}_1$ is a locally free sheaf on~$X$;
    \item there is a surjective morphism of sheaves of graded algebras
      \begin{equation}
        \varphi\colon\tensor_{\mathcal{O}_X}(\mathcal{H}_1) \twoheadrightarrow \mathcal{H};
      \end{equation}
    \item $\varphi$ is an isomorphism in degree 0 and 1;
    \item $\ker(\varphi) \cong  \mathcal{R} \otimes_{\mathcal{O}_X} \tensor_{\mathcal{O}_X}(\mathcal{H}_1)$ where~$\mathcal{R}$ is a direct summand of~$\Sym_{\mathcal{O}_X,2}(\mathcal{H}_1) \subset \tensor_{\mathcal{O}_X}(\mathcal{H}_1)_2$.
  \end{itemize}
\end{definition}

The following is immediate from the local nature of the construction of~$\Sym_{\mathcal{O}_X,2}(-)$ and~$\tensor_{\mathcal{O}_X}(-)$.
\begin{lemma}
  \label{lemma:symmetric-sheaf-locally}
  Let~$\mathcal{H}$ be a sheaf of graded algebras on~$X$. Then the following are equivalent:
  \begin{enumerate}
    \item $\mathcal{H}$ is a symmetric sheaf of graded algebras on~$X$.
    \item for each point~$p \in X$ we have that~$\mathcal{H}_p$ is a graded~($\mathcal{O}_{X,p}$-)algebra with symmetric relations as in \cref{subsection:symmetric-relations-clifford-comparison}.
  \end{enumerate}
\end{lemma}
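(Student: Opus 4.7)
The plan is to observe that the four conditions defining a symmetric sheaf of graded algebras are entirely local: they can be translated back and forth between~$\mathcal{H}$ and its stalks because the constructions~$\tensor_{\mathcal{O}_X}(-)$ and~$\Sym_{\mathcal{O}_X,2}(-)$ commute with taking stalks, as do kernels and cokernels of sheaf morphisms. The lemma therefore reduces to an unpacking of definitions, and the main work is to check each of the four clauses separately against the local definition from \cref{subsection:symmetric-relations-clifford-comparison}.

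For the direction~$(1)\Rightarrow(2)$, I would localize the defining exact sequence
\[
  0\to \mathcal{R}\otimes_{\mathcal{O}_X}\tensor_{\mathcal{O}_X}(\mathcal{H}_1)\to \tensor_{\mathcal{O}_X}(\mathcal{H}_1)\to\mathcal{H}\to 0
\]
at a point~$p\in X$. The stalk~$(\mathcal{H}_1)_p$ is a finitely generated free~$\mathcal{O}_{X,p}$\dash module since~$\mathcal{H}_1$ is locally free, and taking stalks preserves direct summands, so~$\mathcal{R}_p$ is a direct summand of~$\Sym^2_{\mathcal{O}_{X,p}}((\mathcal{H}_1)_p)$. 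This presents~$\mathcal{H}_p$ as~$\tensor_{\mathcal{O}_{X,p}}((\mathcal{H}_1)_p)/(\mathcal{R}_p)$, which is a graded algebra with symmetric relations in the sense of \cref{subsection:symmetric-relations-clifford-comparison}.

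For the converse~$(2)\Rightarrow(1)$, I would reconstruct the global data. The multiplication in~$\mathcal{H}$ gives a natural morphism~$\varphi\colon\tensor_{\mathcal{O}_X}(\mathcal{H}_1)\to\mathcal{H}$ of graded~$\mathcal{O}_X$\dash algebras. Local freeness of~$\mathcal{H}_1$, surjectivity of~$\varphi$, and the property that~$\varphi$ is an isomorphism in degrees~$0$ and~$1$, can all be verified on stalks, where they follow immediately from the hypothesis (together with the standing assumption that~$\mathcal{H}$ and in particular~$\mathcal{H}_1$ are coherent~$\mathcal{O}_X$\dash modules, so that free stalks assemble to a locally free sheaf). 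Setting~$\mathcal{R}\coloneqq\ker(\varphi)_2\subset\tensor_{\mathcal{O}_X}(\mathcal{H}_1)_2$, the hypothesis gives that each stalk~$\mathcal{R}_p$ sits as a direct summand inside~$\Sym^2_{\mathcal{O}_{X,p}}((\mathcal{H}_1)_p)$. Since being a direct summand of a locally free sheaf is equivalent to the quotient being locally free, which is a stalk\dash local condition, one concludes that~$\mathcal{R}$ is globally a direct summand of~$\Sym_{\mathcal{O}_X,2}(\mathcal{H}_1)$.

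The main obstacle is the last clause: showing that~$\ker(\varphi)\cong\mathcal{R}\otimes_{\mathcal{O}_X}\tensor_{\mathcal{O}_X}(\mathcal{H}_1)$ globally, rather than only stalk\dash wise. I would handle this by constructing the natural comparison morphism, obtained by extending the inclusion~$\mathcal{R}\hookrightarrow\ker(\varphi)$ via right multiplication, and then verifying that it is an isomorphism on every stalk thanks to the symmetric\dash relations hypothesis in~$(2)$. Since a morphism of sheaves that is an isomorphism on all stalks is an isomorphism, this completes the equivalence.
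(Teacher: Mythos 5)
Your overall strategy---localize each clause of \cref{definition:symmetric-sheaves-of-algebras} and use that $\tensor_{\mathcal{O}_X}(-)$, $\Sym_{\mathcal{O}_X,2}(-)$, kernels and images commute with taking stalks---is exactly the locality argument the paper has in mind (it offers no proof beyond asserting this), and your direction $(1)\Rightarrow(2)$ is fine. The problem is the last step of $(2)\Rightarrow(1)$. Since $\varphi_p$ is a morphism of algebras, hypothesis $(2)$ identifies $\ker(\varphi_p)$ with the \emph{two-sided} ideal $(R_p)$ generated by the degree-$2$ relations, whereas your comparison map $\mathcal{R}\otimes_{\mathcal{O}_X}\tensor_{\mathcal{O}_X}(\mathcal{H}_1)\to\ker(\varphi)$, built from right multiplication, only reaches the right ideal $\mathcal{R}\cdot\tensor_{\mathcal{O}_X}(\mathcal{H}_1)$. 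These differ from degree $3$ onwards, and the symmetry of the relations does not close the gap: for $F=kx\oplus ky$ and $R=k(xy+yx)$ (a direct summand of $\Sym^2F$) the two-sided ideal has $4$-dimensional degree-$3$ part, while $R\otimes F$ is only $2$-dimensional. So the map you propose is not surjective on stalks, and no verification ``thanks to the symmetric-relations hypothesis'' can make it an isomorphism. The clause $\ker(\varphi)\cong\mathcal{R}\otimes_{\mathcal{O}_X}\tensor_{\mathcal{O}_X}(\mathcal{H}_1)$ must be read as ``$\ker(\varphi)$ is the two-sided ideal sheaf generated by $\mathcal{R}$'' (this is how it is used in \cref{lemma:symmetric-relations-to-clifford-global}, where $\mathcal{H}=\tensor_{\mathcal{O}_X}(\mathcal{H}_1)/\mathcal{R}$ denotes the quotient by the ideal generated by $\mathcal{R}$); with that reading the step becomes genuinely stalk-local and easy: $\ker(\varphi)$ and the image of $\tensor_{\mathcal{O}_X}(\mathcal{H}_1)\otimes_{\mathcal{O}_X}\mathcal{R}\otimes_{\mathcal{O}_X}\tensor_{\mathcal{O}_X}(\mathcal{H}_1)\to\tensor_{\mathcal{O}_X}(\mathcal{H}_1)$ are subsheaves of $\tensor_{\mathcal{O}_X}(\mathcal{H}_1)$ that coincide on every stalk by $(2)$, hence coincide.

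Two smaller points. The assertion ``direct summand of a locally free sheaf $\Leftrightarrow$ locally free quotient'' is only valid in the locally split sense: stalkwise splittings do not assemble into a global splitting (a short exact sequence of locally free sheaves, such as the Euler sequence on $\mathbb{P}^1$, need not split). Since condition $(2)$ is purely stalkwise, the ``direct summand'' clause can only be meant locally---which is also all the paper ever uses, cf.\ \cref{proposition:description-Q}, where local freeness of the relevant quotient is proved directly---so you should weaken your conclusion rather than claim a global direct summand. Finally, your appeal to coherence of $\mathcal{H}_1$ to pass from free stalks to local freeness is an extra finiteness hypothesis not in the statement; it is harmless here because the lemma is only applied to $\mathrm{H}(Y/X)$, whose graded pieces are coherent, but it is worth flagging as an implicit assumption rather than a ``standing'' one.
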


We use this lemma to prove the following.

\begin{lemma}
  \label{lemma:HYX-symmetric}
  Let~$X$ and~$Y$ be smooth varieties over a field~$k$ of characteristic different from 2 and let~$f\colon Y \rightarrow X$ be a finite morphism of degree~$n$. Let~$\mathrm{H}(Y/X)$ be the associated sheaf of graded algebras as in \cref{remark:Pi-ZYX-2-periodic}. Then~$\mathrm{H}(Y/X)$ is a symmetric sheaf of graded algebras on~$X$.
\end{lemma}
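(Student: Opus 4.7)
The plan is to use \cref{lemma:symmetric-sheaf-locally} to reduce the global claim to a pointwise verification, and then to recognise each stalk as a generalised preprojective algebra whose relations are automatically symmetric by \cref{lemma:veronese-to-clifford}. First I will check that $\mathrm{H}(Y/X)_1 = f_*\mathcal{O}_Y/\mathcal{O}_X$ is locally free on $X$: since $f$ is finite between smooth varieties of the same dimension it is flat by miracle flatness, so $f_*\mathcal{O}_Y$ is locally free of rank $n$; when $n$ is invertible on $X$, the normalised trace map $\tfrac{1}{n}\trace\colon f_*\mathcal{O}_Y \to \mathcal{O}_X$ splits the structure inclusion, yielding $f_*\mathcal{O}_Y \cong \mathcal{O}_X \oplus \mathrm{H}(Y/X)_1$ and hence local freeness.

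Next, for each $p \in X$ set $C = \mathcal{O}_{X,p}$ and $D = (f_*\mathcal{O}_Y)_p$, a free $C$-module of rank $n$. Since $\omega_{Y/X}$ is an invertible sheaf on $Y$, its restriction to the finite set $f^{-1}(p)$ is free of rank one as a module over the semilocal ring $D$. Combined with Grothendieck duality $f_*\omega_{Y/X} \cong \sheafHom_{\mathcal{O}_X}(f_*\mathcal{O}_Y, \mathcal{O}_X)$ from \eqref{equation:upper-shriek}, this provides an isomorphism $\Hom_C(D,C) \cong D$ of $D$-modules, so $D/C$ is a relative Frobenius extension of rank $n$ in the sense of \cref{subsection:veronese-clifford-comparison}. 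Identifying the stalk at $p$ of the global unit morphism
\begin{equation}
  r\colon{}_{\id}(f_*\omega_{X/Y}^{-1})_{\id} \to {}_{\id}(f_*\mathcal{O}_Y \otimes f_*\mathcal{O}_Y)_{\id}
\end{equation}
appearing in \cref{remark:Pi-ZYX-2-periodic} with the local relation map $r\colon D \to D \otimes_C D$ from \cref{definition:generalized-preprojective}, one concludes that $\mathrm{H}(Y/X)_p$ is the generalised preprojective algebra $\mathrm{H}(D/C)$ as in \eqref{equation:definition-HDC2}.

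Once the local identification is in place, the dual basis computation in the proof of \cref{lemma:veronese-to-clifford} applies verbatim: the element $r_\varphi = \sum_i e_i \otimes f_i$ is basis-independent and invariant under swapping the tensor factors by \eqref{equation:ei-and-fi-commute}, so its image $R_p$ in $(D/i(C))^{\otimes 2}$ lies in $\Sym_{C,2}(D/i(C))$; freeness of the relevant $C$-modules together with the Hilbert series calculation from \cref{proposition:veronese-to-clifford-iso} furthermore shows $R_p$ is a direct summand. The local direct summands assemble into a global direct summand $\mathcal{R} \subset \Sym_{\mathcal{O}_X,2}(\mathrm{H}(Y/X)_1)$, verifying the remaining conditions of \cref{definition:symmetric-sheaves-of-algebras}. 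The main subtlety I anticipate is matching the adjunction-based $r$ from \cref{remark:Pi-ZYX-2-periodic} with the dual basis shape of $r$ in \cref{definition:generalized-preprojective}: this requires unwinding the $f_* \dashv f^!$ counit via the identifications in \eqref{equation:upper-shriek}--\eqref{equation:upper-shriek-3} and relying on \cite[lemma~3.24]{1503.03992v4}, but is essentially routine once phrased correctly.
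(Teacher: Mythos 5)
Your proposal is correct and takes essentially the same route as the paper: reduce via \cref{lemma:symmetric-sheaf-locally} to stalks, identify $\mathrm{H}(Y/X)_p$ with $\mathrm{H}\big((f_*\mathcal{O}_Y)_p/\mathcal{O}_{X,p}\big)$ for a relative Frobenius pair, and deduce symmetry of the relations from the basis-independence and self-duality of $r_\varphi$ as in the proof of \cref{lemma:veronese-to-clifford}. The step you defer as ``routine''---matching the adjunction-theoretic $r$ of \cref{remark:Pi-ZYX-2-periodic} with the dual-basis $r$ of \cref{definition:generalized-preprojective}---is precisely what the paper carries out, via two commutative diagrams of functors and the projection formula, so your sketch points at exactly the paper's argument.
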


\begin{proof}
  Using \cref{lemma:symmetric-sheaf-locally} it suffices to prove that~$\mathrm{H}(Y/X)_p$ is a graded algebra with symmetric relations as in \cref{subsection:symmetric-relations-clifford-comparison}. For this we first use \cite[lemma~3.2.7]{1503.03992v4} which (among other things) shows that~$(f_* \mathcal{O}_Y)_p / \mathcal{O}_{X,p}$ is relative Frobenius of rank~$n$. For example one has an isomorphism of~$(f_* \mathcal{O}_Y)_p$-modules
  \begin{equation}
    \label{equation:omega-f-p-locally}
    (f_* \mathcal{O}_Y)_p \cong (\omega_{X/Y})_p \cong \Hom_{\mathcal{O}_{X,p}}((f_* \mathcal{O}_Y)_p / \mathcal{O}_{X,p}).
  \end{equation}
  We now claim that
  \begin{equation} \label{equation:f-to-frobenius-pair}
    \mathrm{H}(Y/X)_p = \mathrm{H}\big((f_* \mathcal{O}_Y)_p / \mathcal{O}_{X,p}\big)
  \end{equation}
  where the right hand side was defined in \eqref{equation:definition-HDC}. The lemma follows from this claim and the fact that~$\mathrm{H}((f_* \mathcal{O}_Y)_p / \mathcal{O}_{X,p})$ is a graded algebra with symmetric relations (see for example the proof of \cref{lemma:veronese-to-clifford}).

  By comparing \eqref{equation:definition-HDC2} with \cref{remark:Pi-ZYX-2-periodic} we see that the claim holds in degree 0 and 1: both algebras can be written as quotients of the tensor algebra~$\tensor_{\mathcal{O}_{X,p}}((f_* \mathcal{O}_Y)_p / \mathcal{O}_{X,p})$ subject to quadratic relations.

  Recall that the quadratic relations in~$\mathrm{H}((f_* \mathcal{O}_Y)_p / \mathcal{O}_{X,p})$ were obtained by composing the morphisms~$r$ in \eqref{equation:morphism-r-locally} with the obvious morphism
  \begin{equation}
    \label{equation:obvious-pi}
    \pi\colon (f_* \mathcal{O}_Y)_p \otimes_{\mathcal{O}_{X,p} } (f_* \mathcal{O}_Y)_p \rightarrow (f_* \mathcal{O}_Y)_p / \mathcal{O}_{X,p} \otimes_{\mathcal{O}_{X,p} } (f_* \mathcal{O}_Y)_p / \mathcal{O}_{X,p}.
  \end{equation}
  A closer examination of the morphisms~$r$ in \eqref{equation:morphism-r-locally}, see for example the proof of \cite[lemma~3.4.2]{1503.03992v4}, shows that the forgetful functor~$\Mod((f_* \mathcal{O}_Y)_p) \rightarrow \Mod(\mathcal{O}_{X,p})$ is left adjoint to the functor~$- \otimes_{\mathcal{O}_{X,p}}(f_* \mathcal{O}_Y)_p : \Mod(\mathcal{O}_{X,p}) \rightarrow \Mod((f_* \mathcal{O}_Y)_p)$ and that~$r$ coincides with the associated unit morphism
  \begin{equation}
    \label{equation:unit-morphism-locally}
    (f_* \mathcal{O}_Y)_p \rightarrow  (f_* \mathcal{O}_Y)_p \otimes_{\mathcal{O}_{X,p}}(f_* \mathcal{O}_Y)_p.
  \end{equation}

  Similarly the quadratic relations in~$\mathrm{H}(Y/X)_p$ were obtained as the image of~$\pi \circ r'$ where~$\pi$ is as in \eqref{equation:obvious-pi} and~$r'$ is obtained by localizing the unit morphism in \eqref{equation:definition-r-global}. Hence in order to prove the claim it suffices to show that the localization of the unit morphism in \eqref{equation:definition-r-global} coincides with the unit morphism in \eqref{equation:unit-morphism-locally}. For this we use the isomorphism~$(f_* \mathcal{O}_Y)_p \cong (\omega_{X/Y})_p$ as in \eqref{equation:omega-f-p-locally} and the commutativity of the following diagrams
  \begin{equation}
    \begin{tikzcd}[column sep = large]
      \Qcoh(\mathcal{O}_Y) \arrow{r}{f_*=-\otimes\prescript{}{\id}{(\mathcal{O}_Y)}_f} \arrow[d, "f_{*,p}"] & \Qcoh(\mathcal{O}_X) \arrow[d, "(-)_p"] \\
      \Mod((f_*\mathcal{O}_Y)_p) \arrow[r, "\textrm{forget}"] & \Mod(\mathcal{O}_{X,p})
    \end{tikzcd}
  \end{equation}
  and
  \begin{equation}
    \begin{tikzcd}[column sep = large]
      \Qcoh(\mathcal{O}_X) \arrow{r}{f^*=-\otimes\prescript{}{f}{(\mathcal{O}_Y)_{\id}}} \arrow[d, "(-)_p"] & \Qcoh(\mathcal{O}_Y) \arrow[d, "f_{*,p}"] \\
      \Mod(\mathcal{O}_{X,p}) \arrow[r, "- \otimes_{\mathcal{O}_{X,p}} (f_*\mathcal{O}_Y)_p "] & \Mod((f_*\mathcal{O}_Y)_p)
    \end{tikzcd}
  \end{equation}
  where for the second diagram, commutativity follows from the projection formula.
\end{proof}

The local nature of the constructions in \cref{definition:clifford-algebra-sheaf} allow us to generalize \cref{lemma:symmetric-relations-to-clifford} and \cref{proposition:map-to-clifford-is-iso} as follows.
\begin{lemma}
  \label{lemma:symmetric-relations-to-clifford-global}
  Let~$\mathcal{H} = \tensor_{\mathcal{O}_X}(\mathcal{H}_1)/ \mathcal{R}$ be a symmetric sheaf of graded algebras on~$X$. Let~$\mathcal{Q} = \Sym_{\mathcal{O}_X}^2(\mathcal{H}_1))/ \mathcal{R}$, $\mathcal{A} = \Sym_{\mathcal{O}_X} \mathcal{Q}$ and~$\mathcal{E} = \mathcal{H}_1 \otimes_{\mathcal{O}_X} \mathcal{A}$.

  Let~$q\colon\Sym_{\mathcal{A}}^2(\mathcal{E}) \rightarrow \mathcal{A}$ be induced by the quotient map~$\tilde{q}\colon \Sym_{\mathcal{O}_X}^2(\mathcal{H}_1)) \rightarrow \mathcal{Q}$; i.e.
  \begin{equation}
    q\colon \Sym_\mathcal{A}^2(\mathcal{E}) = \Sym_{\mathcal{O}_X}^2(\mathcal{H}_1) \otimes_{\mathcal{O}_X} \mathcal{A} \rightarrow \mathcal{Q} \otimes_{\mathcal{O}_X} \mathcal{A} \overset{\mu}{\rightarrow} \mathcal{A}
  \end{equation}
  where~$\mu$ is given by multiplication in~$\mathcal{A}$.

  Then the inclusions~$\mathcal{O}_X \hookrightarrow \mathcal{A}$ and~$\mathcal{H}_1 \hookrightarrow \mathcal{E}$ induce a epimorphism
  \begin{equation}
    \label{equation:symmetric-relations-to-clifford-sheaf}
    \varphi\colon \mathcal{H} \twoheadrightarrow \clifford_{\mathcal{A}}(\mathcal{E},q).
  \end{equation}
\end{lemma}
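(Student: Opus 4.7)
The plan is to sheafify the arguments from \cref{lemma:symmetric-relations-to-clifford} and \cref{proposition:map-to-clifford-is-iso}, using \cref{lemma:symmetric-sheaf-locally} to reduce both the construction of $\varphi$ and the verification of surjectivity to the local statements already proved in \cref{subsection:symmetric-relations-clifford-comparison}. Since the Clifford algebra in \cref{definition:clifford-algebra-sheaf} is defined by sheafification of a presheaf of quotients of tensor algebras, it is compatible with localisation at points of $X$, and similarly for $\Sym_{\mathcal{O}_X}^2$, $\tensor_{\mathcal{O}_X}$, and the formation of $\mathcal{A}$, $\mathcal{E}$ and $\mathcal{Q}$.

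First I would construct $\varphi$ as follows. The inclusions $\mathcal{O}_X \hookrightarrow \mathcal{A}$ and $\mathcal{H}_1 \hookrightarrow \mathcal{E} = \mathcal{H}_1 \otimes_{\mathcal{O}_X} \mathcal{A}$ induce a morphism of sheaves of algebras
\begin{equation}
\tensor_{\mathcal{O}_X}(\mathcal{H}_1) \longrightarrow \tensor_{\mathcal{A}}(\mathcal{E}) \longrightarrow \clifford_{\mathcal{A}}(\mathcal{E},q).
\end{equation}
To see that this factors through $\mathcal{H} = \tensor_{\mathcal{O}_X}(\mathcal{H}_1)/\mathcal{R}$, it suffices (by sheafification) to check that on each open set a local section of $\mathcal{R}$ is sent to zero. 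But $\mathcal{R}$ is a direct summand of $\Sym_{\mathcal{O}_X,2}(\mathcal{H}_1)$, so local sections are sums of symmetric tensors $v \otimes w + w \otimes v$. In the Clifford algebra such an element equals $q(vw)$, and because $q$ was defined as the $\mathcal{A}$\dash linear extension of the quotient map $\tilde q\colon \Sym_{\mathcal{O}_X}^2(\mathcal{H}_1) \to \mathcal{Q} = \Sym_{\mathcal{O}_X}^2(\mathcal{H}_1)/\mathcal{R}$, the image of $\mathcal{R}$ under $q$ vanishes. This produces the desired morphism $\varphi$.

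For surjectivity, I would argue stalkwise: a morphism of sheaves is an epimorphism if and only if it is surjective on stalks. Fix $p \in X$. By \cref{lemma:symmetric-sheaf-locally} the stalk $\mathcal{H}_p$ is a graded $\mathcal{O}_{X,p}$\dash algebra with symmetric relations, of the form $\tensor_{\mathcal{O}_{X,p}}(F)/(R)$ with $F = (\mathcal{H}_1)_p$ and $R = \mathcal{R}_p$. Since $\Sym_{\mathcal{O}_X}^2$, the quotient defining $\mathcal{Q}$, and $\Sym_{\mathcal{O}_X}$ all commute with localisation, $\mathcal{A}_p$ and $\mathcal{E}_p$ coincide with the data $A$ and $E$ built from $F$ and $R$ as in \cref{subsection:symmetric-relations-clifford-comparison}, and $q_p$ is the corresponding quadratic form. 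The stalk of the Clifford algebra is the Clifford algebra of the stalk (via the sheafification in \cref{definition:clifford-algebra-sheaf}), so the stalk of $\varphi$ at $p$ is precisely the morphism of \cref{proposition:map-to-clifford-is-iso} for this data, which is surjective.

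The main technical point is the compatibility of the Clifford-algebra construction with stalks, but this is immediate from \cref{definition:clifford-algebra-sheaf} because sheafification commutes with taking stalks and both $\tensor_{\mathcal{A}}$ and the quadratic relations are formed pointwise. No further obstacles are expected, since verifying that $\varphi$ is well defined reduces to the same calculation $v \otimes w + w \otimes v \mapsto q(vw) = 0$ as in the affine case, and surjectivity is a purely local condition.
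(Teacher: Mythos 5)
Your argument is correct and is exactly the argument the paper leaves implicit: the paper merely asserts that the lemma "follows from the local nature of the constructions" and gives no further details, while you spell out the construction of $\varphi$ and the stalkwise reduction to \cref{proposition:map-to-clifford-is-iso} via \cref{lemma:symmetric-sheaf-locally}. No gaps.
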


We now combine \cref{lemma:symmetric-relations-to-clifford-global} with \cref{lemma:HYX-symmetric} to obtain the following.
\begin{lemma}
  \label{lemma:H4-clifford}
  Let~$X$ and~$Y$ be smooth varieties over a field~$k$ of characteristic different from 2 and let~$f\colon Y \rightarrow X$ be a finite morphism of degree~4. Let~$\mathrm{H}(Y/X)$ and~$\clifford_{\mathcal{A}}(\mathcal{E},q)$ be as above. Then there is an isomorphism
  \begin{equation}
    \label{equation:HYX-iso-clifford}
    \mathrm{H}(Y/X) \cong \clifford_{\mathcal{A}}(\mathcal{E},q).
  \end{equation}
\end{lemma}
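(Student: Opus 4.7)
The plan is to reduce the global statement to the local statement already proved in \cref{proposition:veronese-to-clifford-iso}. By \cref{lemma:HYX-symmetric} the sheaf of graded algebras~$\mathrm{H}(Y/X)$ is symmetric, so \cref{lemma:symmetric-relations-to-clifford-global} immediately yields a surjective morphism of graded sheaves of algebras
\begin{equation}
  \varphi\colon\mathrm{H}(Y/X)\twoheadrightarrow\clifford_{\mathcal{A}}(\mathcal{E},q).
\end{equation}
It therefore remains only to establish injectivity.

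To do so, I would check that~$\varphi$ is an isomorphism on stalks at every point~$p\in X$. On the left, the argument given in the proof of \cref{lemma:HYX-symmetric} (specifically \eqref{equation:f-to-frobenius-pair}) identifies~$\mathrm{H}(Y/X)_p$ with the local generalised-preprojective construction~$\mathrm{H}\big((f_*\mathcal{O}_Y)_p/\mathcal{O}_{X,p}\big)$, and since~$f$ has degree~4, the pair~$(f_*\mathcal{O}_Y)_p/\mathcal{O}_{X,p}$ is a relative Frobenius extension of rank~4. On the right, the constructions of~$\Sym^2_{\mathcal{O}_X}$, $\tensor_{\mathcal{O}_X}$ and the Clifford algebra from \cref{definition:clifford-algebra-sheaf} are all manifestly compatible with localisation, so the stalk~$\clifford_{\mathcal{A}}(\mathcal{E},q)_p$ is exactly the local Clifford algebra~$\clifford_{A_p}(E_p,q_p)$ built in \cref{subsection:symmetric-relations-clifford-comparison} from the relative Frobenius data at~$p$.

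Since \cref{lemma:symmetric-relations-to-clifford-global} is itself the sheafification of \cref{lemma:symmetric-relations-to-clifford} and \cref{proposition:map-to-clifford-is-iso}, the stalk~$\varphi_p$ coincides with the morphism~$\gamma\colon\mathrm{H}(D/C)\to\clifford_A(E,q)$ of \eqref{equation:veronese-to-clifford} for the rank-4 relative Frobenius pair~$D/C=(f_*\mathcal{O}_Y)_p/\mathcal{O}_{X,p}$. By \cref{proposition:veronese-to-clifford-iso} this map is an isomorphism. Hence~$\varphi_p$ is an isomorphism for every~$p\in X$, and therefore~$\varphi$ itself is an isomorphism.

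The main obstacle in carrying out this plan cleanly is the bookkeeping required to identify the stalks of the globally defined objects~$\mathcal{Q}$, $\mathcal{A}$, $\mathcal{E}$ and the quadratic form~$q$ with the local data~$Q$, $A$, $E$ and~$q$ appearing in \cref{proposition:veronese-to-clifford-iso}, and then to verify that the stalk of~$\varphi$ is literally the map~$\gamma$ of \eqref{equation:veronese-to-clifford}. Once this compatibility is spelled out, the degree-4 hypothesis is used only to invoke the local rank calculation in \cref{proposition:veronese-to-clifford-iso}, after which the rest is formal.
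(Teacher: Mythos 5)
Your proposal is correct and follows essentially the same approach as the paper: obtain the epimorphism from \cref{lemma:symmetric-relations-to-clifford-global} together with \cref{lemma:HYX-symmetric}, then check injectivity stalkwise using the identification \eqref{equation:f-to-frobenius-pair} and the rank-4 case of \cref{proposition:veronese-to-clifford-iso}. The extra care you spend on identifying stalks of $\mathcal{A}$, $\mathcal{E}$, $q$ with the local data is exactly the bookkeeping the paper leaves implicit.
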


\begin{proof}
  Combining \cref{lemma:symmetric-relations-to-clifford-global} and \cref{lemma:HYX-symmetric} we already know that there is an epimorphism
  \begin{equation}
    \varphi\colon \mathrm{H}(Y/X) \twoheadrightarrow \clifford_{\mathcal{A}}(\mathcal{E},q).
  \end{equation}
  It hence suffices to show that~$\varphi$ is in fact an isomorphism, and this can be checked locally. Hence let~$p$ be any point in~$X$. We must show that
  \begin{equation}
    \varphi_p \colon \mathrm{H}(Y/X)_p = \mathrm{H}((f_* \mathcal{O}_Y)_p / \mathcal{O}_{X,p}) \twoheadrightarrow \left(\clifford_{\mathcal{A}}(\mathcal{E},q)\right)_p = \clifford_{\mathcal{A}_p}(\mathcal{E}_p,q_p)
  \end{equation}
  is an isomorphism. This follows from \cref{proposition:veronese-to-clifford-iso} by noticing that~$\varphi_p$ coincides with \eqref{equation:veronese-to-clifford}.
\end{proof}

We are particularly interested in the case where~$Y=X=\mathbb{P}^1$. Hence from now on fix a finite, degree~4 morphism~$f\colon\mathbb{P}^1 \rightarrow \mathbb{P}^1$. We continue this section by describing the sheaf of algebras~$\mathcal{A}$ and the~$\mathcal{A}$\dash module~$\mathcal{E}$ in \eqref{equation:HYX-iso-clifford}. We will use the following easy lemmas.

\begin{lemma}
  \label{lemma:pushforward-line-bundles}
  We have for~$n \in \mathbb{Z}$ and~$i=0,1,2,3$ that
  \begin{equation}
    f_*(\mathcal{O}_{\mathbb{P}^1}(4n+i))\cong\mathcal{O}_{\mathbb{P}^1}(n)^{\oplus i+1}\oplus\mathcal{O}_{\mathbb{P}^1}(n-1)^{3-i}.
  \end{equation}

  \begin{proof}
    Because~$f$ is a finite morphism between regular schemes we have that it is necessarily flat.
    Hence~$f_*(\mathcal{O}_{\mathbb{P}^1}(4n+i))$ must be locally free. By Grothendieck's splitting theorem it must split as a direct sum of line bundles. The exact splitting can be found using that
    \begin{equation}
      \begin{aligned}
        \Hom_{\mathbb{P}^1}(f_* \left(\mathcal{O}_{\mathbb{P}^1}(4n+i) \right), \mathcal{O}_{\mathbb{P}^1}(a))
        &\cong \Hom_{\mathbb{P}^1}(\mathcal{O}_{\mathbb{P}^1}(4n+i) , f^* \left(\mathcal{O}_{\mathbb{P}^1}(a)\right)) \\
        &\cong \Hom_{\mathbb{P}^1}(\mathcal{O}_{\mathbb{P}^1}(4n+i) , \mathcal{O}_{\mathbb{P}^1}(4a))
      \end{aligned}
    \end{equation}
    which is~$\max \{0,4a-4n-i\}$\dash dimensional.
  \end{proof}
\end{lemma}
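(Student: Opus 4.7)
My plan is to apply Grothendieck's splitting theorem to the rank\dash $4$ locally free sheaf $f_*\mathcal{O}_{\mathbb{P}^1}(4n+i)$ and then to pin down the splitting type by testing against Serre twists via the projection formula. As already observed in the paper's own sketch, finiteness of $f$ between smooth curves implies flatness, so $f_*\mathcal{O}_{\mathbb{P}^1}(4n+i)$ is locally free of rank~$4$; Grothendieck's splitting theorem then gives a decomposition $\bigoplus_{j=1}^{4}\mathcal{O}_{\mathbb{P}^1}(a_j)$ with $a_1\geq\ldots\geq a_4$, and the task reduces to identifying the multiset $\{a_j\}$.

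The next step is to use $f^{*}\mathcal{O}_{\mathbb{P}^1}(1)\cong\mathcal{O}_{\mathbb{P}^1}(4)$ (a consequence of $\deg f=4$) together with the projection formula to obtain, for every $m\in\mathbb{Z}$, the isomorphism $f_*\mathcal{O}_{\mathbb{P}^1}(4n+i)\otimes\mathcal{O}_{\mathbb{P}^1}(m)\cong f_*\mathcal{O}_{\mathbb{P}^1}(4n+i+4m)$. Taking dimensions of global sections, and using $\dim\HH^0(\mathbb{P}^1,\mathcal{O}_{\mathbb{P}^1}(d))=\max\{0,d+1\}$, this yields the numerical identity
\[
  \sum_{j=1}^{4}\max\{0,\,a_j+m+1\}\;=\;\max\{0,\,4n+i+4m+1\}
\]
for every $m\in\mathbb{Z}$.

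The final step is to read off the $a_j$ by specialising $m$ to three values. For $m=-n-1$ the right-hand side equals $\max\{0,i-3\}=0$ (since $0\leq i\leq 3$), so $a_j\leq n$ for all $j$. For $m=-n$ it equals $i+1$, which combined with the previous bound forces exactly $i+1$ of the $a_j$ to equal $n$. For $m=-n+1$ it equals $i+5$; subtracting the contribution $2(i+1)$ of the summands with $a_j=n$ leaves $3-i$ to be distributed over the remaining $3-i$ indices, each satisfying $a_j\leq n-1$ and therefore contributing $\max\{0,a_j-n+2\}\leq 1$, which forces each of these remaining $a_j$ to equal $n-1$ exactly. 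This gives the claimed decomposition.

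The approach is entirely standard, the only ingredients being flatness, Grothendieck splitting, and the projection formula, so I do not anticipate a genuine obstacle. The one place to be careful is the direction of the adjunction: the projection formula directly gives the right bookkeeping, whereas a $\Hom$\dash adjunction with $f_*$ in the first slot would naturally involve $f^{!}$ and hence a twist by the relative dualising sheaf $\omega_{\mathbb{P}^1/\mathbb{P}^1}$.
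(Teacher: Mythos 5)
Your proof is correct, and while it shares the paper's skeleton (flatness of $f$, hence local freeness of the pushforward, then Grothendieck splitting, then pinning down the twists by pairing against line bundles), the step that identifies the splitting type is genuinely different — and, as it happens, the more robust of the two. The paper computes $\Hom_{\mathbb{P}^1}(f_*\mathcal{O}_{\mathbb{P}^1}(4n+i),\mathcal{O}_{\mathbb{P}^1}(a))$ and rewrites it as $\Hom(\mathcal{O}_{\mathbb{P}^1}(4n+i),f^*\mathcal{O}_{\mathbb{P}^1}(a))$; this is precisely the pitfall you flag in your last sentence, since the right adjoint of $f_*$ is $f^!(-)=f^*(-)\otimes\omega_{X/Y}$ rather than $f^*$, so taken literally that display (and the stated count $\max\{0,4a-4n-i\}$) does not reproduce the Hom-dimensions of the asserted splitting $\mathcal{O}(n)^{\oplus i+1}\oplus\mathcal{O}(n-1)^{\oplus 3-i}$, which are $(i+1)\max\{0,a-n+1\}+(3-i)\max\{0,a-n+2\}$; to make that route exact one must either carry the $\omega_{X/Y}\cong\mathcal{O}_{\mathbb{P}^1}(6)$ twist through Grothendieck duality or put the pushforward in the second slot. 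Your argument does the latter implicitly: computing $\HH^0$ of $f_*\mathcal{O}(4n+i)\otimes\mathcal{O}(m)\cong f_*\mathcal{O}(4n+i+4m)$ (projection formula plus $f^*\mathcal{O}(1)\cong\mathcal{O}(4)$, and $\HH^0(X,f_*\mathcal{F})=\HH^0(Y,\mathcal{F})$ for the finite map $f$) is the same as computing $\Hom(\mathcal{O}(-m),f_*\mathcal{O}(4n+i))$, which only uses the correct adjunction $f^*\dashv f_*$, and your three specialisations $m=-n-1,-n,-n+1$ do force exactly $i+1$ summands $\mathcal{O}(n)$ and $3-i$ summands $\mathcal{O}(n-1)$. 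So both methods buy the same lemma, but yours is correct as written, whereas the paper's sketch needs the dualising twist (or the switch of variables you describe) to be literally accurate.
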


\begin{lemma}
  \label{lemma:description-F}
  We have that
  \begin{equation}
    \mathcal{F}_{Y/X}\coloneqq\mathrm{H}(\mathbb{P}^1/\mathbb{P}^1)_1 \cong\mathcal{O}_{\mathbb{P}^1}(-1)^{\oplus 3}.
  \end{equation}

  \begin{proof}
    By \cref{lemma:pushforward-line-bundles} we have that~$f_*(\mathcal{O}_{\mathbb{P}^1})\cong\mathcal{O}_{\mathbb{P}^1}\oplus\mathcal{O}_{\mathbb{P}^1}(-1)^{\oplus 3}$. As the structure morphism~$\mathcal{O}_{\mathbb{P}^1} \rightarrow f_*(\mathcal{O}_{\mathbb{P}^1})$ is nonzero we have that it is necessarily an isomorphism between~$\mathcal{O}_{\mathbb{P}^1}$ and the summand~$\mathcal{O}_{\mathbb{P}^1}$ of~$f_*(\mathcal{O}_{\mathbb{P}^1})$.
  \end{proof}
\end{lemma}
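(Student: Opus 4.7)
My plan is to combine the explicit description of $\mathrm{H}(\mathbb{P}^1/\mathbb{P}^1)_1$ from \cref{remark:Pi-ZYX-2-periodic} with the computation of the pushforward in \cref{lemma:pushforward-line-bundles}. By construction $\mathrm{H}(\mathbb{P}^1/\mathbb{P}^1)_1 = f_*\mathcal{O}_{\mathbb{P}^1}/\mathcal{O}_{\mathbb{P}^1}$, where the inclusion of $\mathcal{O}_{\mathbb{P}^1}$ is the unit of the adjunction $f^* \dashv f_*$, i.e.\ the structure morphism of the $\mathcal{O}_{\mathbb{P}^1}$\dash algebra $f_*\mathcal{O}_{\mathbb{P}^1}$. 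So the task reduces to identifying this subsheaf inside an explicit splitting of $f_*\mathcal{O}_{\mathbb{P}^1}$.

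Specialising \cref{lemma:pushforward-line-bundles} to $n = 0$ and $i = 0$ gives
\begin{equation}
  f_*\mathcal{O}_{\mathbb{P}^1} \cong \mathcal{O}_{\mathbb{P}^1} \oplus \mathcal{O}_{\mathbb{P}^1}(-1)^{\oplus 3}.
\end{equation}
To analyse the structure morphism, I would use that
\begin{equation}
  \Hom_{\mathbb{P}^1}\bigl(\mathcal{O}_{\mathbb{P}^1},\, \mathcal{O}_{\mathbb{P}^1} \oplus \mathcal{O}_{\mathbb{P}^1}(-1)^{\oplus 3}\bigr) \cong k \oplus 0,
\end{equation}
so any nonzero map $\mathcal{O}_{\mathbb{P}^1} \to f_*\mathcal{O}_{\mathbb{P}^1}$ factors through the first summand. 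Since the unit $\mathcal{O}_{\mathbb{P}^1} \to f_*\mathcal{O}_{\mathbb{P}^1}$ is nonzero (indeed injective, as $f$ is finite and surjective), and a nonzero endomorphism of $\mathcal{O}_{\mathbb{P}^1}$ is an isomorphism, the structure morphism identifies $\mathcal{O}_{\mathbb{P}^1}$ with the first summand of the splitting. Taking the quotient leaves exactly $\mathcal{O}_{\mathbb{P}^1}(-1)^{\oplus 3}$, which is the claim.

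There is no real obstacle: the statement is essentially a bookkeeping consequence of \cref{lemma:pushforward-line-bundles} together with the vanishing of $\Hom(\mathcal{O}_{\mathbb{P}^1}, \mathcal{O}_{\mathbb{P}^1}(-1))$. The only point that requires a moment's care is justifying that the structure morphism is nonzero, which follows from the fact that $f_*\mathcal{O}_{\mathbb{P}^1}$ is a sheaf of $\mathcal{O}_{\mathbb{P}^1}$\dash algebras with a global unit section.
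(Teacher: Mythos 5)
Your proposal is correct and follows essentially the same route as the paper: identify $\mathrm{H}(\mathbb{P}^1/\mathbb{P}^1)_1$ with $f_*\mathcal{O}_{\mathbb{P}^1}/\mathcal{O}_{\mathbb{P}^1}$, apply \cref{lemma:pushforward-line-bundles} with $n=i=0$, and observe that the nonzero structure morphism must identify $\mathcal{O}_{\mathbb{P}^1}$ with the trivial summand. The only difference is that you spell out the vanishing of $\Hom(\mathcal{O}_{\mathbb{P}^1},\mathcal{O}_{\mathbb{P}^1}(-1))$, which the paper's proof leaves implicit.
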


\begin{lemma}
  \label{lemma:description-I}
  We have that
  \begin{equation}
    \omega_{X/Y} \cong\mathcal{O}_{\mathbb{P}^1}(6).
  \end{equation}

  \begin{proof}
    By construction
    \begin{equation}
      \begin{aligned}
        f_* \omega_{X/Y}
        &\cong\sheafHom(f_* \mathcal{O}_{\mathbb{P}^1}, \mathcal{O}_{\mathbb{P}^1}) \\
        &\cong\sheafHom(\mathcal{O}_{\mathbb{P}^1} \oplus \mathcal{O}_{\mathbb{P}^1}(-1)^{\oplus 3}, \mathcal{O}_{\mathbb{P}^1}) \\
        &\cong\mathcal{O}_{\mathbb{P}^1} \oplus \mathcal{O}_{\mathbb{P}^1}(1)^{\oplus 3}\\
        &\cong f_* (\mathcal{O}_{\mathbb{P}^1}(6))
      \end{aligned}
    \end{equation}
    where we applied \cref{lemma:pushforward-line-bundles} twice. The result now follows as~$\mathcal{I}$ is an invertible sheaf and~$f_*$ is faithful (because~$f$ is affine).
  \end{proof}

\end{lemma}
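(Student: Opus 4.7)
The plan is to unpack the defining property of $\omega_{X/Y}$, namely $f_*\omega_{X/Y}\cong\sheafHom_{\mathcal{O}_{\mathbb{P}^1}}(f_*\mathcal{O}_{\mathbb{P}^1},\mathcal{O}_{\mathbb{P}^1})$, and then match both sides against \cref{lemma:pushforward-line-bundles} to force the answer to be $\mathcal{O}_{\mathbb{P}^1}(6)$. The key observation is that, because $f$ is affine and $\omega_{X/Y}$ is a line bundle, the functor $f_*$ is faithful on invertible sheaves; hence it is enough to exhibit an abstract isomorphism $f_*\omega_{X/Y}\cong f_*\mathcal{O}_{\mathbb{P}^1}(6)$ of $\mathcal{O}_{\mathbb{P}^1}$\dash modules.

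First I would compute $f_*\mathcal{O}_{\mathbb{P}^1}$ by specialising \cref{lemma:pushforward-line-bundles} to $n=0$, $i=0$, obtaining $f_*\mathcal{O}_{\mathbb{P}^1}\cong\mathcal{O}_{\mathbb{P}^1}\oplus\mathcal{O}_{\mathbb{P}^1}(-1)^{\oplus 3}$ (alternatively, this was already extracted in the proof of \cref{lemma:description-F}). Dualising termwise yields
\begin{equation*}
  \sheafHom_{\mathcal{O}_{\mathbb{P}^1}}(f_*\mathcal{O}_{\mathbb{P}^1},\mathcal{O}_{\mathbb{P}^1})\cong\mathcal{O}_{\mathbb{P}^1}\oplus\mathcal{O}_{\mathbb{P}^1}(1)^{\oplus 3}.
\end{equation*}

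Second I would apply \cref{lemma:pushforward-line-bundles} again, this time with $n=1$, $i=2$, so that $4n+i=6$ and the formula gives $f_*\mathcal{O}_{\mathbb{P}^1}(6)\cong\mathcal{O}_{\mathbb{P}^1}(1)^{\oplus 3}\oplus\mathcal{O}_{\mathbb{P}^1}$. This is the same decomposition as above, so $f_*\omega_{X/Y}\cong f_*\mathcal{O}_{\mathbb{P}^1}(6)$. Faithfulness of $f_*$ (which follows from $f$ being affine) then upgrades this to $\omega_{X/Y}\cong\mathcal{O}_{\mathbb{P}^1}(6)$.

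The only step that requires care is the last one: one needs to know that $\omega_{X/Y}$ is actually a line bundle before appealing to faithfulness, since in general $f_*$ can collapse non\dash isomorphic quasi\dash coherent sheaves into isomorphic ones on the base. Because $f$ is a finite flat morphism between smooth curves it is Gorenstein, so $\omega_{X/Y}$ is invertible and the argument goes through. As a reassuring sanity check, the adjunction formula $\omega_{X/Y}\cong\omega_{\mathbb{P}^1}\otimes f^{*}\omega_{\mathbb{P}^1}^{\vee}\cong\mathcal{O}_{\mathbb{P}^1}(-2)\otimes\mathcal{O}_{\mathbb{P}^1}(8)\cong\mathcal{O}_{\mathbb{P}^1}(6)$ gives the same value, using $\deg f=4$.
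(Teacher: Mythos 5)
Your proposal is correct and follows essentially the same route as the paper: the same chain $f_*\omega_{X/Y}\cong\sheafHom(f_*\mathcal{O}_{\mathbb{P}^1},\mathcal{O}_{\mathbb{P}^1})\cong\mathcal{O}_{\mathbb{P}^1}\oplus\mathcal{O}_{\mathbb{P}^1}(1)^{\oplus 3}\cong f_*\mathcal{O}_{\mathbb{P}^1}(6)$, using \cref{lemma:pushforward-line-bundles} twice and then invertibility of $\omega_{X/Y}$ together with faithfulness of $f_*$ for an affine $f$. Your added remarks (Gorenstein-ness guaranteeing invertibility, and the adjunction-formula sanity check giving $\mathcal{O}_{\mathbb{P}^1}(-2)\otimes\mathcal{O}_{\mathbb{P}^1}(8)\cong\mathcal{O}_{\mathbb{P}^1}(6)$) are consistent extras, not a different argument.
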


We can use these lemmas to obtain the following.
\begin{proposition}
  \label{proposition:description-Q}
  Let~$\mathcal{Q} \coloneqq \Sym_{\mathbb{P}^1}^2\big(\mathcal{F}_{Y/X}/f_*(\omega_{X/Y}^{-1})\big)$ as above.

  We have that
  \begin{equation}
    \mathcal{Q}\cong\mathcal{O}_{\mathbb{P}^1}(-2)\oplus\mathcal{O}_{\mathbb{P}^1}(-1).
  \end{equation}
\end{proposition}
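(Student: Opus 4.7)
The plan is to exhibit $\mathcal{Q}$ as the cokernel of an explicit morphism of vector bundles on $\mathbb{P}^1$, and then pin down its splitting type cohomologically. First I would combine \cref{lemma:description-F}, \cref{lemma:description-I} and \cref{lemma:pushforward-line-bundles} (the latter with $n=-2$ and $i=2$) to identify the two bundles appearing in the defining short exact sequence
\begin{equation*}
  0 \to f_*\omega_{X/Y}^{-1} \xrightarrow{\rho} \Sym^2_{\mathcal{O}_{\mathbb{P}^1}}(\mathcal{F}_{Y/X}) \to \mathcal{Q} \to 0
\end{equation*}
as
\begin{equation*}
  \Sym^2_{\mathcal{O}_{\mathbb{P}^1}}(\mathcal{F}_{Y/X}) \cong \mathcal{O}_{\mathbb{P}^1}(-2)^{\oplus 6}, \qquad f_*\omega_{X/Y}^{-1} = f_*\mathcal{O}_{\mathbb{P}^1}(-6) \cong \mathcal{O}_{\mathbb{P}^1}(-2)^{\oplus 3}\oplus\mathcal{O}_{\mathbb{P}^1}(-3).
\end{equation*}

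Second, I would verify that $\rho$ is injective with locally free cokernel of rank $2$. Locally at a point $p\in\mathbb{P}^1$, the image of $\rho_p$ is the rank-$4$ module of quadratic relations of the local generalised preprojective algebra $\mathrm{H}\big((f_*\mathcal{O}_Y)_p/\mathcal{O}_{X,p}\big)$, as follows from the local Clifford presentation of \cref{lemma:H4-clifford} and \cref{proposition:veronese-to-clifford-iso}. Since the source has rank $4$ and maps onto this rank-$4$ image already at the generic point, $\rho$ is injective generically, and hence (as both sides are torsion-free sheaves on an integral curve) everywhere injective. The quotient $\mathcal{Q}$ is then locally free of rank $2$, and a first-Chern-class computation gives $\deg\mathcal{Q} = -12 - (-9) = -3$.

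Third, I would twist the above short exact sequence by $\mathcal{O}_{\mathbb{P}^1}(1)$ and take cohomology. From
\begin{equation*}
  (f_*\omega_{X/Y}^{-1})(1) \cong \mathcal{O}(-1)^{\oplus 3}\oplus\mathcal{O}(-2), \qquad \Sym^2\mathcal{F}_{Y/X}(1) \cong \mathcal{O}(-1)^{\oplus 6}
\end{equation*}
one immediately reads off that both $H^0$-terms vanish, $H^1$ of the second term vanishes, and Serre duality gives $H^1((f_*\omega_{X/Y}^{-1})(1)) \cong k$. The long exact sequence collapses to $0\to H^0(\mathcal{Q}(1)) \to k \to 0$, so $H^0(\mathcal{Q}(1))\cong k$. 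By Grothendieck's splitting theorem, $\mathcal{Q}(1)\cong\mathcal{O}(a)\oplus\mathcal{O}(b)$ with $a+b=-1$; the only such splitting with one-dimensional $H^0$ is $(a,b)=(0,-1)$. This forces $\mathcal{Q}\cong\mathcal{O}_{\mathbb{P}^1}(-2)\oplus\mathcal{O}_{\mathbb{P}^1}(-1)$, as claimed.

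The main obstacle is the second step: confirming that the image of $\rho$ at each stalk has rank exactly $4$. This follows from the rank computation in \cref{proposition:veronese-to-clifford-iso}, which shows that $\mathrm{H}(D/C)_2$ is locally free of the predicted rank; the symmetry of the relations together with the rank-$3$ contribution of $\wedge^2 \mathcal{F}_{Y/X}$ then pins down the rank of the relations as exactly $4$, giving injectivity of $\rho$ at every stalk and a locally free cokernel.
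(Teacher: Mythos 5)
Your argument is correct, and its skeleton is the paper's: the same defining short exact sequence, the same identifications $\Sym^2\mathcal{F}_{Y/X}\cong\mathcal{O}_{\mathbb{P}^1}(-2)^{\oplus 6}$ and $f_*\omega_{X/Y}^{-1}\cong\mathcal{O}_{\mathbb{P}^1}(-2)^{\oplus 3}\oplus\mathcal{O}_{\mathbb{P}^1}(-3)$, and the same local reduction to a relative Frobenius pair $D/C$. You differ in the two delicate sub-steps. For injectivity and local freeness of the cokernel, the paper uses the freeness of rank $12$ of $(0,1_D)\bigl(\Pi_C(D)\bigr)_2(0,1_D)$ from \cite[lemma~3.10]{MR3565443} to get injectivity, and then a separate fibrewise count $\dim_k Q=6-4=2$ over a field together with an argument as in \cite[\S 3]{MR3565443} for local freeness; you instead use that $\mathrm{H}(D/C)_2=F^{\otimes 2}/R$ is free of rank $5$, so that the symmetric relation module $R$ is a rank-$4$ direct summand, which gives stalkwise injectivity of $\rho$ and exhibits $\Sym_2F/R$ as the kernel of a surjection of free modules of ranks $5$ and $3$ (the $\wedge^2$ part), hence locally free of rank $2$ in one stroke. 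For the splitting type, the paper combines the determinant with the constraint that a quotient of $\mathcal{O}_{\mathbb{P}^1}(-2)^{\oplus 6}$ has summands of degree $\geq-2$; you twist by $\mathcal{O}_{\mathbb{P}^1}(1)$ and read off $h^0(\mathcal{Q}(1))=1$ from the long exact sequence, which is equally valid and perhaps more mechanical.

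Two cautions. First, in your second step the sentence deducing that ``$\mathcal{Q}$ is then locally free of rank $2$'' from generic injectivity alone is not valid as stated: an injective map of vector bundles on a curve can have torsion cokernel. Your closing paragraph is what actually repairs this (fibrewise injectivity, i.e.\ the image being a direct summand at every stalk), so that argument should be promoted to the main text rather than treated as an afterthought. Second, be careful with the citation: the proof of \cref{proposition:veronese-to-clifford-iso} itself quotes that $Q$ is free of rank $2$, which is precisely the local content of the present proposition, so invoking that proposition (or \cref{lemma:H4-clifford}, which depends on it) wholesale would look circular. What you actually need is only the freeness and rank of $\mathrm{H}(D/C)_2$, which comes from \cite[lemma~3.10]{1503.03992v4} (equivalently the rank statement in \cite[lemma~3.10]{MR3565443}) and is independent of the present proposition; cite that directly and your argument is clean.
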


\begin{proof}
  We wish to prove that
  \begin{enumerate}
    \item the morphism~$r\colon f_*(\omega_{X/Y}^{-1}) \rightarrow \Sym_{\mathbb{P}^1}^2(\mathrm{H}(\mathbb{P}^1/\mathbb{P}^1)_1)$ is injective;
    \item the quotient~$\mathcal{Q}$ is a locally free sheaf.
  \end{enumerate}
  Assuming that these hold there is a short exact sequence
  \begin{equation}
    \label{equation:Q-as-cokernel}
    0 \rightarrow \mathcal{O}_{\mathbb{P}^1}(-2)^{\oplus 3} \oplus \mathcal{O}_{\mathbb{P}^1}(-3) \rightarrow \mathcal{O}_{\mathbb{P}^1}(-2)^{\oplus 6} \rightarrow \mathcal{Q} \rightarrow 0
  \end{equation}
  where we used \cref{lemma:description-F} to obtain~$\Sym_{\mathbb{P}^1}^2(\mathcal{F}) \cong \mathcal{O}_{\mathbb{P}^1}(-2)^{\oplus 6}$ and \cref{lemma:description-I} together with \cref{lemma:pushforward-line-bundles} to find~$f_*(\omega_{X/Y}^{-1}) \cong f_*(\mathcal{O}_{\mathbb{P}^1}(-6)) \cong \mathcal{O}_{\mathbb{P}^1}(-2)^{\oplus 3} \oplus \mathcal{O}_{\mathbb{P}^1}(-3)$. If~$\mathcal{Q}$ is locally free it splits as a sum of two line bundles~$\mathcal{O}_{\mathbb{P}^1}(i) \oplus \mathcal{O}_{\mathbb{P}^1}(j)$ where~$i \geq j \geq -2$. By \cite[exercise~II.5.16.d]{MR0463157} we find
  \begin{equation}
    \mathcal{O}_{\mathbb{P}^1}(-12) \cong \mathcal{O}_{\mathbb{P}^1}(-9) \otimes \mathcal{O}_{\mathbb{P}^1}(i+j)
  \end{equation}
  with unique solution~$i=-1, j=-2$. It hence remains to prove that~$r$ is injective and that is cokernel is locally free.

  The injectivity is checked locally. As in the proof of \cref{lemma:HYX-symmetric},~$r$~is locally given by
  \begin{equation}
    D \rightarrow {}_DD_C \otimes_CD_D: 1_D \mapsto \sum_{i=1}^4 e_i \otimes f_i
  \end{equation}
  where~$D = (f_* \mathcal{O}_Y)_p$, $C = \mathcal{O}_{X,p}$ and~$e_1, e_2, e_3, e_4$ and~$f_1, f_2, f_3, f_4$ form dual bases for~$D$ as~$C$\dash module. As~$\coker(r) = (0,1_D)\left((\Pi_C(D))_2\right)(0,1_D)$ is a free~$C$-module of rank~12 by \cite[lemma~3.10]{MR3565443}, $r$~must necessarily be injective.

  That the cokernel of~$r$ is locally free can also be checked locally. Hence let~$Q= \Sym_C^2(D/C) / \im(r)$ for a relative Frobenius pair~$D/C$. A computation similar to the one carried out in \cite[\S3]{MR3565443} reduces to showing that~$\dim_k(Q)$ does not depend on~$D$ in the case where~$C$ is an algebraically closed field~$k$. This in turn follows from the equality
  \begin{equation}
    \dim_k(Q) = \dim_k(\Sym_k^2(D/k)) - \dim_k(D) = 6 - 4 = 2 .
  \end{equation}
\end{proof}

We can now conclude the description in the following corollary.

\begin{corollary}
  \label{corollary:H-is-clifford}
  There exists an isomorphism
  \begin{equation}
    \mathrm{H}(\mathbb{P}^1 / \mathbb{P}^1) \cong \clifford_{\Sym_{\mathbb{P}^1}(\mathcal{O}_{\mathbb{P}^1}(-2)\oplus\mathcal{O}_{\mathbb{P}^1}(-1))} \left(\mathcal{O}_{\mathbb{P}^1}(-1)^{\oplus 3} \otimes \Sym_{\mathbb{P}^1}(\mathcal{O}_{\mathbb{P}^1}(-2)\oplus\mathcal{O}_{\mathbb{P}^1}(-1)), q \right)
  \end{equation}
\end{corollary}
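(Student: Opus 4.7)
The plan is essentially to assemble the previously established ingredients into a single statement. By \cref{lemma:H4-clifford} applied to the degree~4 map~$f\colon \mathbb{P}^1\to\mathbb{P}^1$, we already have an isomorphism
\begin{equation*}
  \mathrm{H}(\mathbb{P}^1/\mathbb{P}^1) \cong \clifford_{\mathcal{A}}(\mathcal{E},q)
\end{equation*}
where~$\mathcal{A}=\Sym_{\mathcal{O}_{\mathbb{P}^1}}\mathcal{Q}$, $\mathcal{E}=\mathcal{H}(\mathbb{P}^1/\mathbb{P}^1)_1\otimes_{\mathcal{O}_{\mathbb{P}^1}}\mathcal{A}$, and~$\mathcal{Q}$ is the quotient described in \cref{lemma:symmetric-relations-to-clifford-global} and \cref{proposition:description-Q}. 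So the only task is to replace~$\mathcal{H}(\mathbb{P}^1/\mathbb{P}^1)_1$ and~$\mathcal{Q}$ by their explicit forms.

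First I would invoke \cref{lemma:description-F} to identify the degree~1 piece as~$\mathcal{O}_{\mathbb{P}^1}(-1)^{\oplus 3}$. Then I would invoke \cref{proposition:description-Q} to identify~$\mathcal{Q}$ with~$\mathcal{O}_{\mathbb{P}^1}(-2)\oplus\mathcal{O}_{\mathbb{P}^1}(-1)$, hence~$\mathcal{A}\cong \Sym_{\mathbb{P}^1}(\mathcal{O}_{\mathbb{P}^1}(-2)\oplus\mathcal{O}_{\mathbb{P}^1}(-1))$. Substituting these two identifications into the expression~$\clifford_{\mathcal{A}}(\mathcal{H}_1\otimes_{\mathcal{O}_{\mathbb{P}^1}}\mathcal{A},q)$ yields exactly the Clifford algebra on the right hand side of the corollary.

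There is essentially no obstacle: the real work was done in verifying that~$\mathrm{H}(\mathbb{P}^1/\mathbb{P}^1)$ is a symmetric sheaf of graded algebras (\cref{lemma:HYX-symmetric}), that the map to the Clifford algebra is an isomorphism when the rank is~4 (\cref{proposition:veronese-to-clifford-iso} and its globalisation \cref{lemma:H4-clifford}), and in the cohomological computations of \cref{lemma:pushforward-line-bundles,lemma:description-F,lemma:description-I,proposition:description-Q}. The corollary is simply the statement obtained by plugging the specific line-bundle data on~$\mathbb{P}^1$ into the abstract Clifford description, so the proof is a one-sentence assembly citing these three results.
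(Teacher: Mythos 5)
Your proposal is correct and matches the paper exactly: the corollary is stated in the paper without a separate proof, precisely because it is the assembly of \cref{lemma:H4-clifford} with the identifications of \cref{lemma:description-F} and \cref{proposition:description-Q}, which is what you do. Nothing is missing.
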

We can moreover twist everything in the construction by the appropriate~$\mathcal{O}_{\mathbb{P}^1}(i)$'s to find and equivalence of categories
\begin{eqnarray*}
& \qgr \left( \clifford_{\Sym_{\mathbb{P}^1}(\mathcal{O}_{\mathbb{P}^1}(-2)\oplus\mathcal{O}_{\mathbb{P}^1}(-1))} \left(\mathcal{O}_{\mathbb{P}^1}(-1)^{\oplus 3} \otimes \Sym_{\mathbb{P}^1}(\mathcal{O}_{\mathbb{P}^1}(-2)\oplus\mathcal{O}_{\mathbb{P}^1}(-1)), q \right) \right) & \\
& \downarrow \ \cong & \\
& \qgr \left( \clifford_{\Sym_{\mathbb{P}^1}(\mathcal{O}_{\mathbb{P}^1}\oplus\mathcal{O}_{\mathbb{P}^1}(1))} \left(\mathcal{O}_{\mathbb{P}^1}^{\oplus 3} \otimes \Sym_{\mathbb{P}^1}(\mathcal{O}_{\mathbb{P}^1}\oplus\mathcal{O}_{\mathbb{P}^1}(1)), q \right) \right) &
\end{eqnarray*}

\begin{remark}
  \label{remark:q-global}
  The morphism~$q\colon\Sym^2(\mathcal{O}_{\mathbb{P}^1}^{\oplus 3}) \rightarrow \mathcal{O}_{\mathbb{P}^1} \oplus \mathcal{O}_{\mathbb{P}^1}(1)$ in \eqref{equation:veronese-to-clifford} is obtained from a surjective map~$q\colon\Sym^2(E) \rightarrow V$ where~$E\coloneqq\HH^0(\mathbb{P}^1,\mathcal{O}_{\mathbb{P}^1}^{\oplus 3})$ and~$V\coloneqq\HH^0(\mathbb{P}^1,\mathcal{O}_{\mathbb{P}^1} \oplus \mathcal{O}_{\mathbb{P}^1}(1))$ because these sheaves are generated by their global sections, and the dimensions of the Hom's between the sheaves is the same as the dimension of the Hom's between their global sections.
\end{remark}

\section{Quaternion orders on \texorpdfstring{$\mathbb{F}_1$}{F1} as Clifford algebras}
\label{section:blowup-as-clifford}
We will now write the construction from \cite{MR3847233} in the case responsible for type~$m=2$ in~\eqref{equation:type-Bm} as a Clifford algebra. This allows us to compare it to the Clifford algebra obtained in \cref{section:nc-P1-as-clifford}, which is done in \cref{section:comparison}.

In \cref{subsection:construction-as-blowup} we quickly recall the construction of \cite{MR3847233}. In \cref{subsection:quaternionic-noncommutative-planes} we describe how we can describe the Artin--Schelter regular algebras in the special case where the order of the automorphism is~2. In \cref{subsection:blowing-up-clifford-algebras} we can then use the description as a graded Clifford algebra before blowing up to give a description as a Clifford algebra with values in a line bundle after blowing up, which allows us to write the associated abelian category in the same way as was obtained in \cref{section:nc-P1-as-clifford}. This can be summarised as follows.

\begin{equation*}
  \begin{array}{cr}
    \coh p^* \mathcal{S} \\
    \big\downarrow & \text{\cref{lemma:sheaf-is-clifford-algebra}} \\
    \coh p^* \big( \clifford_{\mathbb{P}^2}(\Sym^2(E) \otimes_k \mathcal{O}_{\mathbb{P}^2}, q, \mathcal{O}_{\mathbb{P}^2}(1))_0 \big) \\
    \big\downarrow & \text{\cref{subsection:blowing-up-clifford-algebras}} \\
    \coh\clifford_{\mathbb{F}_1}(\Sym^2(E) \otimes_k \mathcal{O}_{\mathbb{F}_1}, q, \mathcal{O}_{\mathbb{F}_1}(1))_0 \\
    \big\downarrow & \S \ref{subsection:clifford-algebras-with-values} \\
    \qgr\clifford_{\Sym(\mathcal{O}_{\mathbb{P}^1} \oplus \mathcal{O}_{\mathbb{P}^1}(1))}\big(E \otimes_k \Sym(\mathcal{O}_{\mathbb{P}^1} \oplus \mathcal{O}_{\mathbb{P}^1}(1)), q\big)
  \end{array}
\end{equation*}

\subsection{Construction of the surface as a blowup}
\label{subsection:construction-as-blowup}
In this section we quickly recall the construction from \cite{MR3847233}. The idea is to use the maximal order on~$\mathbb{P}^2$ induced from a quadratic~3\dash dimensional Artin--Schelter regular algebra finite over its center, and blow up a point~$p$ on~$\mathbb{P}^2$ \emph{outside} the ramification locus of the maximal order. The pullback of the maximal order is again a maximal order, now on~$\Bl_x\mathbb{P}^2$. Using a generalisation of Orlov's blowup formula to the case of maximal orders on smooth projective varieties we can then construct derived categories with a full and strong exceptional collection whose Cartan matrix is of the form described in \cite{1607.04246v1} for all~$m\geq 2$. For the purpose of this paper we are only interested in the case~$m=2$, where we wish to show that the construction from op.~cit.~is equivalent to the construction from~\cite{1503.03992v4}.

The setup for this situation is as follows. Let~$A$ be a~3\dash dimensional quadratic Artin--Schelter regular algebra, which is finite over its center.
Then in \cite{MR1880659} it was shown that the center of the sheaf of algebras induced on the Proj of the center of the graded algebra is always isomorphic to~$\mathbb{P}^2$. Hence we always obtain a maximal order~$\mathcal{S}$ on~$\mathbb{P}^2$. Because~$\coh\mathcal{S}\cong\qgr A$, the derived category~$\derived^\bounded(\coh\mathcal{S})$ has a full and strong exceptional collection, mimicking that of~$\mathbb{P}^2$. This maximal order has a ramification divisor~$C$ and an Azumaya locus~$\mathbb{P}^2\setminus C$. The divisor~$C$ is a cubic curve, which is an isogeny of another cubic curve~$E$ which is the point scheme used in the classification of these algebras.

The final step is to consider a point~$x$ in the Azumaya locus. There is a fat point module of the graded algebra~$A$ associated to this point, which defines a skyscraper sheaf supported at~$x$ whose fibre is a simple module for~$\mathcal{S}_p\otimes k(x)$. In \cite[\S3.3]{MR3847233} the first two authors have shown how it is possible to generalise Orlov's blowup formula to this setting. Let us denote the morphisms involved in the blowup as follows.

\begin{equation}
  \begin{tikzcd}
    E=\mathbb{P}^1 \arrow[d] \arrow[r] & \Bl_x\mathbb{P}^2 \arrow[d, "p"] \\
    x \arrow[r] & \mathbb{P}^2.
  \end{tikzcd}
\end{equation}

Then there exist a semiorthogonal decomposition of~$\derived^\bounded(\coh p^*\mathcal{S})$ in terms of~$\derived^\bounded(\coh\mathcal{S})$ and an exceptional object which is the (noncommutative) structure sheaf of the exceptional fibre. This gives the existence of a full and strong exceptional collection in~$\derived^\bounded(\coh p^*\mathcal{S})$. We can summarise the result of \cite{MR3847233} in the case we are interested in as follows, where the unexplained terminology regarding elliptic triples will be introduced shortly.

\begin{theorem}
  \label{theorem:belmans-presotto-construction}
  Let~$A$ be a quadratic~3\dash dimensional Artin--Schelter regular algebra associated to an elliptic triple~$(E,\sigma,\mathcal{L})$ for which the order of~$\sigma$ is~2. With the sheaf of maximal orders~$\mathcal{S}$ as above, and~$p\colon\Bl_x\mathbb{P}^2\to\mathbb{P}^2$ the blowup in a point~$x\in\mathbb{P}^2\setminus C$ in the Azumaya locus, there exists a full and strong exceptional collection
  \begin{equation}
    \label{equation:blowup-collection}
    \derived^\bounded(p^*\mathcal{S})
    =
    \left\langle
      p^*\mathcal{S}_0,
      p^*\mathcal{S}_1,
      p^*\mathcal{S}_2,
      p^*\mathcal{F}
    \right\rangle
  \end{equation}
  where~$\mathcal{S}_i\coloneqq\widetilde{A(i)}$ and~$\mathcal{F}$ is the skyscraper associated to a fat point module, and whose Gram matrix is mutation equivalent to type~$\mathrm{B}_2$.
\end{theorem}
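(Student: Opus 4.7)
The plan is to follow the strategy of~\cite{MR3847233} step by step, specialised to the case where the automorphism~$\sigma$ of the elliptic triple~$(E,\sigma,\mathcal{L})$ has order~$2$. The starting point is the exceptional collection on~$\mathbb{P}^2$ coming from the maximal order~$\mathcal{S}$. Because~$A$ is a~$3$\dash dimensional quadratic Artin--Schelter regular algebra finite over its center, the equivalence~$\coh\mathcal{S}\cong\qgr A$ lifts the Beilinson collection on~$\mathbb{P}^2$ to a full and strong exceptional collection
\[
  \derived^\bounded(\coh\mathcal{S})=\langle \mathcal{S}_0,\mathcal{S}_1,\mathcal{S}_2\rangle
\]
with~$\mathcal{S}_i=\widetilde{A(i)}$. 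The Gram matrix of this collection is determined by~$\dim_k A_j$ for~$j=0,1,2$, which for a quadratic AS-regular algebra of dimension~$3$ gives the matrix whose~$(i,j)$\dash entry is~$\binom{j-i+2}{2}$.

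Next I would apply the noncommutative blowup formula from~\cite[\S3.3]{MR3847233}. Since~$x$ lies in the Azumaya locus of~$\mathcal{S}$, the stalk~$\mathcal{S}_p\otimes k(x)$ is a matrix algebra, and the associated fat point module of~$A$ defines a simple object~$\mathcal{F}$ in~$\coh\mathcal{S}$ supported at~$x$. The noncommutative Orlov-type blowup formula then produces a semiorthogonal decomposition
\[
  \derived^\bounded(\coh p^*\mathcal{S})=\langle p^*\mathcal{S}_0,p^*\mathcal{S}_1,p^*\mathcal{S}_2,p^*\mathcal{F}\rangle
\]
where the last component is the image of~$\mathcal{F}$ under the spherical-type functor associated to the exceptional fibre~$E\cong\mathbb{P}^1$; the key point that~$\sigma$ has order~$2$ enters in guaranteeing that the fat point at~$x$ has multiplicity~$2$, which is what makes this single object~$p^*\mathcal{F}$ the correct replacement for Orlov's twisted exceptional sheaves in the commutative situation. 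Fullness and strongness follow from fullness and strongness of~$\langle\mathcal{S}_0,\mathcal{S}_1,\mathcal{S}_2\rangle$ together with the fact that the extra block contributed by the blowup is orthogonal to the pullback block in the required direction and is itself exceptional.

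The remaining step is to compute the Gram matrix and identify it, up to mutation, with~$(\mathrm{B}_2)$ from~\eqref{equation:type-Bm}. The upper~$3\times 3$ block is just the Gram matrix of~$\langle\mathcal{S}_0,\mathcal{S}_1,\mathcal{S}_2\rangle$. For the last column one uses the formula~$\Ext^\bullet(p^*\mathcal{S}_i,p^*\mathcal{F})=\Ext^\bullet(\mathcal{S}_i,\mathcal{F})$ (from~$p_*p^*=\id$ on the~$\mathcal{S}$\dash side, or equivalently adjunction combined with the projection formula), which reduces the computation to the fibre dimension of~$\mathcal{F}$ at~$x$, giving~$\dim\Hom(\mathcal{S}_i,\mathcal{F})=\mult(\mathcal{F})=2$ for every~$i$ and~$\Ext^{\geq 1}$ vanishing in the Azumaya locus. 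The resulting Gram matrix is~$\begin{pmatrix}1&3&6&2\\0&1&3&2\\0&0&1&2\\0&0&0&1\end{pmatrix}$, and a short sequence of standard mutations (the same ones used in~\cite[\S6]{1607.04246v1} to bring the bundle collection~\eqref{equation:bundle-collection} into the form~$(\mathrm{B}_2)$) shows this is mutation equivalent to~$(\mathrm{B}_2)$.

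The main obstacle is the noncommutative blowup formula itself: verifying that the functor~$p^*$ on~$\coh\mathcal{S}$ has the right semiorthogonality properties and that a single copy of~$p^*\mathcal{F}$ (rather than a twisted pair, as would be the case for a point in the ramification locus) generates the complementary piece. This is exactly the content of~\cite[\S3.3]{MR3847233}, so the proof reduces to invoking that result together with the elementary Ext computation and the mutation step.
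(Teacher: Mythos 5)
Your proposal is correct and follows essentially the same route as the paper: the paper does not reprove this theorem but, exactly as you do, invokes the noncommutative Orlov-type blowup formula of \cite[\S3.3]{MR3847233} applied to the skyscraper of the multiplicity-two fat point at the chosen Azumaya point, records the same Gram matrix you computed, and cites the mutation equivalence with type~$\mathrm{B}_2$. The only nuance is attribution of that last step: for the blowup collection the mutation equivalence is \cite[proposition~14]{MR3847233} (the mutations of \cite[\S6]{1607.04246v1} concern the bundle collection, whose Gram matrix is different), but this does not affect the substance of your argument.
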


The Gram matrix for the exceptional colletion in \eqref{equation:blowup-collection} is given by
\begin{equation}
  \begin{pmatrix}
    1 & 3 & 6 & 2 \\
    0 & 1 & 3 & 2 \\
    0 & 0 & 1 & 2 \\
    0 & 0 & 0 & 1
  \end{pmatrix}.
\end{equation}
It was shown in \cite[proposition~14]{MR3847233} that~$\mathrm{B}_2$ is mutation equivalent to this matrix.

\subsection{Quaternionic noncommutative planes}
\label{subsection:quaternionic-noncommutative-planes}
We wish to understand all quadratic AS-regular algebras that can appear in \cref{theorem:belmans-presotto-construction}, and we want to write everything in terms of Clifford algebras. There exists a complete classification of 3\dash dimensional quadratic Artin--Schelter regular algebras, due to Artin--Tate--Van den Bergh \cite{MR1086882} and Bondal--Polishchuk \cite{MR1230966}. The Artin--Tate--Van den Bergh approach to the classification is in terms of triples of geometric data~\cite[definition~4.5]{MR1086882}.
\begin{definition}
  An \emph{elliptic triple} is a triple~$(C,\sigma,\mathcal{L})$ where
  \begin{enumerate}
    \item $C$ is a divisor of degree~3\ in~$\mathbb{P}^2$;
    \item $\sigma\in\Aut(C)$;
    \item $\mathcal{L}$ is a very ample line bundle of degree~3 on~$C$.
  \end{enumerate}
  We say that it is a \emph{regular triple} if moreover
  \begin{equation} \label{equation:regular-triple}
    \mathcal{L}\otimes(\sigma^*\circ\sigma^*(\mathcal{L}))\cong\sigma^*(\mathcal{L})\otimes\sigma^*(\mathcal{L}).
  \end{equation}
\end{definition}
We are only interested in algebras which are finite over their center, and by \cite[theorem~7.1]{MR1128218} we have that this is the case if and only if the order of~$\sigma$ is finite.

\begin{remark}
  \label{remark:finite-order-curves}
  The condition that~$\sigma$ is of finite order gives a restriction on the curves which can appear in a regular triple: out of~9~possible point schemes, only~4~remain. These are
  \begin{enumerate}
    \item the elliptic curves,
    \item the nodal cubic,
    \item a conic and line in general position,
    \item a triangle of lines.
  \end{enumerate}
  This can be done explicitly by describing the automorphism groups of cubic curves, and then take into account the required compatibility between~$\sigma$ and~$\mathcal{L}$ to define a regular triple.

  Another way of obtaining this restriction is by using the description of the ramification data for maximal orders on~$\mathbb{P}^2$. In this case the Artin--Mumford sequence can be used to show that these~4~curves are the only possible ramification divisors.

  A third way in which it is possible to see this restriction is in terms of the \'etale local structure of a maximal order, which shows that the singularities are necessarily at most nodal \cite{MR1880659}.

  Finally, after proving that the specific algebras we are interested in (those for which~$\sigma$ is not just finite, but of order~2) can be described using graded Clifford algebras, we will see how for those algebras there is yet another way of seeing these~4~curves, namely as discriminants of nets of conics without basepoints.
\end{remark}

As the construction in \cite{1503.03992v4} only concerns the case~$m=2$ of type~\eqref{equation:type-Bm} in the classification of \cite{1607.04246v1}, we can restrict the general construction from \cite{MR3847233} to the case where the order of~$\sigma$ is~2. Then the fat points all have multiplicity~2. Let us introduce some terminology for these.
\begin{definition}
  Let~$A$ be a quadratic 3\dash dimensional Artin--Schelter regular algebra. We say that~$A$ (resp.~$\qgr A$) is \emph{quaternionic} if the automorphism in the associated elliptic triple is of order~2.
\end{definition}
Observe that Artin--Schelter regular algebras whose automorphism~$\sigma$ has order~6 \emph{also} have fat point modules of multiplicity~2 \cite[theorem~7.3]{MR1128218}. But using the proof of \cref{proposition:quaternionic-is-graded-clifford} we will show in \cref{lemma:order-6} that these algebras are not new when considered up to Zhang twist.

We will now show that all quaternionic noncommutative planes can be written as a graded Clifford algebra, up to Zhang twist. This result does not appear as such in the literature, but it follows almost immediately by combining the classification of elliptic triples and the description of ``Clifford quantum~$\mathbb{P}^2$'s'' in \cite[corollary~4.8]{MR2320448}. These are precisely the~3\dash dimensional graded Clifford algebras which are Artin--Schelter regular, i.e.~which are associated to a basepoint-free net of conics. The following proposition can be seen as a converse to this result. Here the condition that the characteristic of~$k$ is not~3 is important.

\begin{proposition}
  \label{proposition:quaternionic-is-graded-clifford}
  Let~$A = A(C,\sigma,\mathcal{L})$ be a quaternionic Artin--Schelter regular algebra. Then~$A$ is the Zhang twist of a graded Clifford algebra.

  \begin{proof}
    Recall from \cite[theorem~1.2]{MR2776790} that~$A$ is a Zhang twist of~$A'$ if and only if their associated~$\mathbb{Z}$\dash algebras~$\check{A}$ and~$\check{A'}$ are isomorphic. Now~$\check{A} = A(C,\mathcal{L},\sigma^*\mathcal{L})$ is a quadratic Artin--Schelter regular~$\mathbb{Z}$\dash algebra in the sense of \cite{MR2836401}. It is shown in \cite[theorem~3.5]{MR2836401} (based upon the results in \cite{MR1128218}) that there is a morphism of algebraic groups~$\eta\colon\Pic_0C\to\Aut C$ where~$\Pic_0C$ consists of all line bundles having degree~0 on each component of~$C$. Moreover we have by \cite[theorem~4.2.2]{MR2836401} that every quadratic Artin--Schelter regular~$\mathbb{Z}$\dash algebra~$A(C,\mathcal{L}_0,\mathcal{L}_1)$ is of the form~$\check{A'}$ where~$A'= A(C,\mathcal{L}_0,\sigma')$ and~$\sigma' \in \im(\eta)$. In particular~$A$ is a Zhang twist of a quadratic Artin--Schelter regular algebra\footnote{It is customary to refer to a quadratic Artin--Schelter regular algebra~$A=A(C,\mathcal{L},\sigma)$ as a \emph{translation algebra} whenever~$\sigma \in \im(\eta)$.}~$A'$ whose associated~$C$\dash automorphism lies in~$\im(\eta)$. We claim that~$A'$ is a graded Clifford algebra.

    A closer investigation of \cite[\S 5]{MR1128218} shows that for every~$\mathcal{G} \in \Pic_0C$ we have that~$\eta(\mathcal{G})$ is uniquely defined by the following property:
    \begin{equation}
      \label{equation:eta-for-nonsingular-points}
      \mathcal{O}_C(\eta(\mathcal{G})(p)) \cong \mathcal{G}(p)
    \end{equation}
    if~$p$ is a nonsingular point of~$C$ and
    \begin{equation}
      \eta(\mathcal{G})(p) = p
    \end{equation}
    if~$p$ is a singular point of~$C$. In particular if all points of~$C$ are singular (which only happens when~$C$ is a triple line), the only finite order automorphism in~$\im(\eta)$ is~$\id_C$. However in this case~$A'$ does not allow fat point modules. This is a contradiction, because the existence (and multiplicity) of fat point modules is an invariant of the category~$\QGr(A)$ and by construction there are equivalences of categories
    \begin{equation}
      \QGr(A) \cong \QGr(\check{A}) \cong \QGr(\check{A'}) \cong \QGr(A').
    \end{equation}
    Hence we can assume that~$C$ contains a nonsingular point~$p$. Now assume that~$\eta(\mathcal{G})^n = \id_C$, then by \eqref{equation:eta-for-nonsingular-points} we find
    \begin{equation}
      \mathcal{O}_C(p) \cong \mathcal{G}^{\otimes n}(p)
    \end{equation}
    and hence
    \begin{equation}
      \mathcal{O}_C \cong \mathcal{G}^{\otimes n}.
    \end{equation}
    This implies that~$\eta$ preserves the order of an element in~$\Pic_0C$. By \eqref{equation:regular-triple} we know that~$\mathcal{L}^{-1} \otimes \sigma^* \mathcal{L}$ has order~2\ in~$\Pic_0C$, and hence~$\sigma' = \eta(\mathcal{L}^{-1} \otimes \sigma^* \mathcal{L})$ has order~2 as well. By the description of~$\Pic_0C$ as e.g.~in \cite[table~1]{MR1230966}, we see that~$\Pic_0C$ only admits order~2 elements when~$C$ is an elliptic curve, a nodal cubic, the union of a conic and a line in general position or a triangle of lines, as explained in \cref{remark:finite-order-curves}.

    In the case of an elliptic curve,~$A'$ is a quaternionic Sklyanin algebra and it is shown in \cref{example:quaternionic-sklyanin} that such an~$A'$ is a graded Clifford algebra. For the other~3~options for the point scheme~$C$ there is a unique order~2~element in~$\Pic_0C$ and the associated algebra~$A'$ is the quotient of~$k\langle x,y,z \rangle$ by the relations
    \begin{equation}
        \left\{
          \begin{aligned}
            xy+yx&=0 \\
            yz+zy&=c_1x^2 \\
            zx+xz&=c_2y^2,
          \end{aligned}
        \right.
      \end{equation}
    where~$(c_1,c_2) = (1,1), (0,1), (0,0)$ for the 3~cases respectively. These algebras are graded Clifford algebra by \cref{example:quaternionic-special}.
  \end{proof}
\end{proposition}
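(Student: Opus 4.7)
The plan is to combine Zhang-twist equivalences with the classification of quadratic AS-regular $\mathbb{Z}$-algebras. The first step is to invoke \cite[theorem~1.2]{MR2776790}: two AS-regular algebras are Zhang twists of each other exactly when their associated $\mathbb{Z}$-algebras are isomorphic. Since $\check{A} = A(C, \mathcal{L}, \sigma^* \mathcal{L})$ is a quadratic AS-regular $\mathbb{Z}$-algebra, the classification \cite[theorem~4.2.2]{MR2836401} produces a translation algebra $A' = A(C, \mathcal{L}_0, \sigma')$ (i.e.~one with $\sigma' \in \im(\eta)$, where $\eta\colon\Pic_0 C \to \Aut C$ is the canonical morphism) whose $\mathbb{Z}$-algebra coincides with $\check{A}$. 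Thus $A$ is a Zhang twist of~$A'$, and the whole problem reduces to showing that every translation algebra with $\sigma'$ of order~2 is a graded Clifford algebra.

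The next step pins down which point schemes~$C$ can arise. I would show that $\eta$ preserves orders: using the explicit formulas $\mathcal{O}_C(\eta(\mathcal{G})(p)) \cong \mathcal{G}(p)$ at smooth points and the fact that $\eta(\mathcal{G})$ fixes the singular locus, an $n$-torsion automorphism in $\im(\eta)$ forces the corresponding $\mathcal{G}$ to be $n$-torsion provided $C$ has at least one smooth point. The regularity condition \eqref{equation:regular-triple} identifies $\sigma'$ with $\eta(\mathcal{L}^{-1} \otimes \sigma^* \mathcal{L})$, so the quaternionic hypothesis yields a 2-torsion element of $\Pic_0 C$. This rules out the triple-line case, where $\Pic_0 C$ has no 2-torsion and the only possible translation algebra is a polynomial ring admitting no fat points; existence of fat point modules is an invariant of $\QGr A$ and hence stable under Zhang twist, yielding the contradiction.

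The remaining four possibilities for~$C$ are precisely those listed in \cref{remark:finite-order-curves}. For each I would identify the (essentially unique) 2-torsion element of $\Pic_0 C$ and write down the relations of the corresponding translation algebra. In the three singular cases the relations should simplify to
\begin{equation}
xy + yx = 0, \quad yz + zy = c_1 x^2, \quad zx + xz = c_2 y^2,
\end{equation}
with $(c_1, c_2) \in \{(1,1), (0,1), (0,0)\}$, which is manifestly a graded Clifford presentation. The main obstacle will be the smooth elliptic case, where $A'$ is a quaternionic Sklyanin algebra: producing a Clifford presentation by hand requires either an explicit change of generators or an appeal to known results on Clifford quantum~$\mathbb{P}^2$'s such as \cite{MR2320448}. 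The other cases and the Zhang-twist reduction are essentially bookkeeping once the order-preservation of $\eta$ and the right 2-torsion element are identified.
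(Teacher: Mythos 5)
Your proposal follows essentially the same route as the paper: the Zhang-twist reduction via \cite[theorem~1.2]{MR2776790} and \cite[theorem~4.2.2]{MR2836401}, the order-preservation of~$\eta$ at smooth points, the exclusion of the triple line via absence of fat points, and the final case-by-case identification of the translation algebra with an explicit graded Clifford algebra (with the Sklyanin case deferred to \cite{MR2320448}). The only cosmetic difference is that you rule out the triple line by observing $\Pic_0 C$ has no 2-torsion, whereas the paper argues directly that the only finite-order element of $\im(\eta)$ is the identity; both land on the same ``no fat points'' contradiction.
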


The classification of \cite[corollary~4.8]{MR2320448} is a converse to \cref{proposition:quaternionic-is-graded-clifford}: it turns out that quaternionic translation algebras are precisely the graded Clifford algebras, and that they describe all quaternionic noncommutative planes.

\begin{remark}
  The algebra~$A'$ constructed in the above proposition is a graded~$k$\dash algebra with symmetric relations. As such \cref{lemma:symmetric-relations-to-clifford} and \cref{proposition:map-to-clifford-is-iso} show the existence of an epimorphism~$\varphi\colon A' \rightarrow \clifford_k(E,q)$. Both~$A'$ and the Clifford algebra~$\clifford_k(E,q)$ are graded~$k$\dash algebras with Hilbert series~$1/(1-t)^3$, hence~$\varphi$ is an isomorphism. This gives an alternative proof for showing that~$A'$ is a Clifford algebra, but it does not highlight the fact that~$\clifford_k(E,q)$ is a graded Clifford algebra.
\end{remark}

\begin{example}
  Not every quaternionic Artin--Schelter regular algebra is a graded Clifford algebra on the nose. By the proof of \cref{proposition:quaternionic-is-graded-clifford} we can also say that not every Artin--Schelter algebra is a translation algebra. As an example we can consider
  \begin{equation}
    k\langle x,y,z\rangle/(xz-zx,yz-zy,xy+yx).
  \end{equation}
  The point scheme~$C$ is the triangle of lines defined by~$xyz$, and the automorphism is given by rescaling by~$-1$ in one component, and exchanging the two others. In a translation algebra the automorphism needs to preserve the components and rescale them in the same way. The prescribed Zhang twist in this case is induced from the automorphism of the degree~1~part which has~$z\mapsto -z$, and we obtain the Clifford algebra associated to the net~$N^{\mathrm{E}}$ in \eqref{equation:nets-of-conics-singular-discriminant}.
\end{example}

We now show why the algebras with an automorphism of order~6 (and hence fat points of multiplicity~2) can be ignored for the purposes of this paper.
\begin{lemma}
  \label{lemma:order-6}
  Let~$A=A(C,\sigma,\mathcal{L})$ be a an Artin--Schelter regular algebra such that the order of~$\sigma$ is~6. Then~$A$ is the Zhang twist of a quaternionic Artin--Schelter regular algebra.

  \begin{proof}
    We have that~$\sigma=\sigma^3\circ\sigma^{-2}$, where~$\sigma^{-2}$ is of order~3. Moreover~$\sigma^{-2}$ commutes with~$\sigma$, and it extends to an automorphism of the ambient~$\mathbb{P}^2$ in which~$C$ is embedded using~$\mathcal{L}$. Hence up to a Zhang twist as in the proof of \cref{proposition:quaternionic-is-graded-clifford} we can also work with the elliptic triple~$(C,\sigma^{3},\mathcal{L})$, which is quaternionic.
  \end{proof}
\end{lemma}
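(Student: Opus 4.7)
The plan is to mimic the ``twist away the non-quaternionic part'' strategy already used in the proof of \cref{proposition:quaternionic-is-graded-clifford}, now applied to the factorization of an order-$6$ automorphism into its $2$\dash torsion and $3$\dash torsion parts. Explicitly, I would first record that since $\sigma$ has order $6$, the elements $\sigma^3$ and $\sigma^{-2}=\sigma^4$ have order $2$ and $3$ respectively and commute, so $\sigma = \sigma^3 \circ \sigma^{-2}$ is the usual primary decomposition. The goal is to eliminate the order-$3$ factor via a Zhang twist, leaving the elliptic triple $(C,\sigma^3,\mathcal{L})$, which is quaternionic by definition.

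Next I would verify that the order-$3$ automorphism $\sigma^{-2}\in\Aut(C)$ extends to an automorphism $\tilde{\tau}$ of the ambient $\mathbb{P}^2=\mathbb{P}(\HH^0(C,\mathcal{L})^\vee)$. This is the analogue of the step in the proof of \cref{proposition:quaternionic-is-graded-clifford} where one identifies the subgroup $\im(\eta)\subset\Aut(C)$ of translations as precisely those automorphisms compatible with $\mathcal{L}$ up to twist, and uses the classification of $C$ (again only elliptic curves, nodal cubics, a conic-plus-line, or a triangle of lines carry an order-$3$ automorphism compatible with a very ample $\mathcal{L}$ of degree~$3$). The point is that such $\sigma^{-2}$ is a translation in $\im(\eta)$, and by the discussion preceding \cite[theorem~4.2.2]{MR2836401} translations always lift to linear automorphisms of the embedding $\mathbb{P}^2$.

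With $\tilde{\tau}$ in hand, I would induce from it a graded $k$\dash algebra automorphism of $A$ which is the identity on the generators in degree~$1$ after the change of basis, and use it to perform the twisting procedure from \cite{MR2776790}: at the level of $\mathbb{Z}$\dash algebras one has $\check{A}=A(C,\mathcal{L},\sigma^*\mathcal{L})$, and replacing $\sigma$ by $\sigma\circ\tilde{\tau}|_C = \sigma\circ\sigma^{-2}\circ\sigma^{-2}=\sigma^{-1}$ (or, more cleanly, twisting by $\sigma^{2}|_C$) gives an isomorphic $\mathbb{Z}$\dash algebra and hence a Zhang-twist equivalent graded algebra $A'=A(C,\sigma^3,\mathcal{L})$. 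Since $\sigma^3$ has order $2$, the algebra $A'$ is quaternionic, finishing the proof.

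The main obstacle is the middle step: checking that the order-$3$ automorphism really does extend to the projective embedding and really is in the image of the translation map $\eta\colon\Pic_0 C\to\Aut C$ so that the Zhang twist apparatus of \cref{proposition:quaternionic-is-graded-clifford} applies verbatim. Everything else is formal once we know we can write $\sigma^{-2}=\eta(\mathcal{G})$ for some $\mathcal{G}\in\Pic_0 C$ of order dividing~$3$; and the existence of such a $\mathcal{G}$ follows, as in the proof of \cref{proposition:quaternionic-is-graded-clifford}, from the order-preserving property of $\eta$ together with the fact that in each of the four remaining point schemes $\Pic_0 C$ does contain $3$\dash torsion elements.
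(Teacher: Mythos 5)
Your proposal follows the paper's proof essentially verbatim: the same primary decomposition $\sigma=\sigma^3\circ\sigma^{-2}$, the same observation that the order-$3$ factor extends to a linear automorphism of the ambient $\mathbb{P}^2$, and the same conclusion by Zhang-twisting to $(C,\sigma^3,\mathcal{L})$. One small slip in the intermediate computation: $\sigma\circ\sigma^{-2}\circ\sigma^{-2}=\sigma^{-3}=\sigma^{3}$ (since $\sigma^3$ is an involution), not $\sigma^{-1}$; fortunately your stated conclusion $A'=A(C,\sigma^3,\mathcal{L})$ is the correct one, matching the parenthetical remark about twisting by $\sigma^2$.
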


\paragraph{Nets of conics}
Because we are only interested in the associated category~$\qgr A$, we can restrict ourselves to considering graded Clifford algebras if we are only interested in those algebras which have fat point modules of multiplicity~2. We will now recall the algebras from \cite[corollary~4.8]{MR2320448}.

In the case of~$n=3$ the linear system of quadrics is known as a net of conics, and there exists a complete classification of these \cite{MR0432666}. Because we are only interested in basepoint-free nets of conics we can summarise the part of this classification with the notation of op.~cit.~as follows. Recall that the \emph{discriminant} is the locus of the singular conics in the linear system. Such a singular conic is either two lines intersecting in a point or a double line. The double lines in the net of conics correspond precisely to the singularities of the discriminant for the basepoint-free nets. In \cref{table:basepoint-free-nets-of-conics} the situation is summarised for the types which are relevant to us, for a complete classification one is referred to~\cite[table~2]{MR0432666}.

\begin{table}
  \centering
  \begin{tabular}{ccc}
    \toprule
    type     & discriminant                       & number of double lines \\
    \midrule
    A        & elliptic curve                     & 0 \\
    B        & nodal cubic                        & 1 \\
    D        & conic and line in general position & 2 \\
    E        & triangle of lines                  & 3 \\
    \bottomrule
  \end{tabular}
  \caption{Base-point free nets of conics}
  \label{table:basepoint-free-nets-of-conics}
\end{table}

\begin{example}[Quaternionic Sklyanin algebras]
  \label{example:quaternionic-sklyanin}
  3\dash dimensional Sklyanin algebras associated at points of order~2 are an interesting class of graded Clifford algebras. It can be shown that such an algebra has a presentation of the form
  \begin{equation}
    \left\{
      \begin{aligned}
        xy+yx&=cz^2 \\
        yz+zy&=cx^2 \\
        zx+xz&=cy^2
      \end{aligned}
    \right.,
  \end{equation}
  where~$c\neq 0$,~$c^3\neq 8$ and~$c^3\neq-1$ \cite[example~3.5]{1412.7001v2}.

  If we identify~$x=x_1$, $y=x_2$ and~$z=x_3$ these algebras are Clifford algebras associated to the net of conics given by
  \begin{equation}
    M_1=
    \begin{pmatrix}
      2 & 0 & 0 \\
      0 & 0 & c \\
      0 & c & 0
    \end{pmatrix},
    M_2=
    \begin{pmatrix}
      0 & 0 & c \\
      0 & 2 & 0 \\
      c & 0 & 0
    \end{pmatrix},
    M_3=
    \begin{pmatrix}
      0 & c & 0 \\
      c & 0 & 0 \\
      0 & 0 & 2
    \end{pmatrix},
  \end{equation}
  or equivalently the symmetric matrix
  \begin{equation}
    M=
    \begin{pmatrix}
      2x^2 & cz^2 & cy^2 \\
      cz^2 & 2y^2 & cx^2 \\
      cy^2 & cx^2 & 2z^2
    \end{pmatrix}
  \end{equation}
  In this case the central elements~$y_i$ are~$x^2$, $y^2$ and~$z^2$ which are in turn the norms of the elements~$x,y,z$ in degree~1. Computing the determinant of this matrix to describe the point modules we get the discriminant curve
  \begin{equation}
    (2c^3 + 8)x^2y^2z^2 - 2c^2(x^6 + y^6 + z^6).
  \end{equation}
\end{example}

\begin{example}[Special quaternionic algebras]
  \label{example:quaternionic-special}
  We have seen in \cref{proposition:quaternionic-is-graded-clifford} that there are three other isomorphism classes of graded Clifford algebras which are relevant to us, besides the generic case of a quaternionic Sklyanin algebra. Their nets of conics are given by
  \begin{equation}
    \begin{aligned}
      M^{\mathrm{B}}
      &=
      \begin{pmatrix}
        2x^2 & 0 & y^2 \\
        0 & 2y & x^2 \\
        y^2 & x^2 & 2z^2
      \end{pmatrix},
      \\
      M^{\mathrm{D}}
      &=
      \begin{pmatrix}
        2x^2 & 0 & 0 \\
        0 & 2y^2 & x^2 \\
        0 & x^2 & 2z^2
      \end{pmatrix},
      \\
      M^{\mathrm{E}}
      &=
      \begin{pmatrix}
        2x^2 & 0 & 0 \\
        0 & 2y^2 & 0 \\
        0 & 0 & 2z^2
      \end{pmatrix}
    \end{aligned}
  \end{equation}
  where~B, D and~E refer to the classification in \cref{table:basepoint-free-nets-of-conics}. The particular choice of basis is made to be compatible with \cite[corollary~4.8]{MR2320448}. The nets of conics are in turn given by
  \begin{equation}
    \label{equation:nets-of-conics-singular-discriminant}
    \begin{aligned}
      N^{\mathrm{B}}
      &=
      \langle x^2+yz, y^2+xz, z^2\rangle,
      \\
      N^{\mathrm{D}}
      &=
      \langle x^2+yz,y^2,z^2\rangle,
      \\
      N^{\mathrm{E}}
      &=
      \langle x^2,y^2,z^2\rangle.
    \end{aligned}
  \end{equation}
\end{example}

\begin{remark}
  It is also possible to study the derived category of the Clifford algebra on~$\mathbb{P}^2$ using the derived category of the associated standard conic bundle, as in \cite[\S5.3]{MR3133293}.
\end{remark}

\subsection{Blowing up Clifford algebras}
\label{subsection:blowing-up-clifford-algebras}
Using \cref{proposition:quaternionic-is-graded-clifford} we get that the Artin--Schelter regular algebras we need to consider in the construction can be written as graded Clifford algebras, as our construction is insensitive to Zhang twist. We now show using a chain of easy lemmas how it is possible to write the maximal order~$p^*\mathcal{S}$ on~$\Bl_x\mathbb{P}^2$ as a Clifford algebra in the sense of \cref{subsection:clifford-algebras-with-ample-values}.

\begin{lemma}
  \label{lemma:sheaf-is-clifford-algebra}
  There exists an isomorphism
  \begin{equation}
    \mathcal{S}
    \cong
    \clifford_{\mathbb{P}^2}(E\otimes_k\mathcal{O}_{\mathbb{P}^2},q,\mathcal{O}_{\mathbb{P}^2}(1))_0.
  \end{equation}
\end{lemma}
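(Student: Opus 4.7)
The plan is to reduce via a Zhang twist to the case where $A$ is itself a graded Clifford algebra, and then invoke \cref{proposition:ample-comparison} to pass from linear algebra over a polynomial ring to a sheaf-level statement on $\mathbb{P}^2$. By \cref{proposition:quaternionic-is-graded-clifford}, up to Zhang twist we may assume $A=\clifford(M)$ for some triple of symmetric matrices $(M_1,M_2,M_3)$ describing a basepoint-free net of conics. Since $\qgr$ is insensitive to Zhang twist and $\mathcal{S}$ is characterised by $\coh\mathcal{S}\cong\qgr A$, this replacement does not affect the lemma.

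By \cref{remark:graded-clifford-is-clifford}, the graded Clifford algebra $A=\clifford(M)$ coincides with $\clifford_{k[y_1,y_2,y_3]}(E\otimes_k k[y_1,y_2,y_3],q)$, where $E=\bigoplus_i kx_i$ and $q$ is the quadratic form determined by the~$M_i$. Interpreting this as a graded $\mathcal{A}$\dash algebra with $\mathcal{A}=k[y_1,y_2,y_3]$ doubly graded (so that the $y_i$ sit in degree~$2$), we are exactly in the setting of \cref{proposition:ample-comparison} with $X=\Spec k$, $Y=\mathbb{P}^2$, $\mathcal{L}=\mathcal{O}_{\mathbb{P}^2}(1)$, and $\mathcal{E}=E$: then $\mathcal{A}=\bigoplus_n f_*\mathcal{O}_{\mathbb{P}^2}(n)$, the form $Q$ of the proposition matches the $q$ above, and $\clifford_\mathcal{A}(E\otimes_{\mathcal{O}_X}\mathcal{A},Q)=A$.

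The sheaf $\mathcal{S}$ is by definition the image of $A$ under the Serre correspondence $\QGr\mathcal{A}\cong\Qcoh\mathbb{P}^2$, so \cref{proposition:ample-comparison} directly identifies it with $\clifford_0(f^*E,q,\mathcal{O}_{\mathbb{P}^2}(1))=\clifford_{\mathbb{P}^2}(E\otimes_k\mathcal{O}_{\mathbb{P}^2},q,\mathcal{O}_{\mathbb{P}^2}(1))_0$, since $f^*E=E\otimes_k\mathcal{O}_{\mathbb{P}^2}$. The main non-formal bookkeeping that I expect to require care is twofold: first, reconciling the natural grading on $A$ (with $|x_i|=1$, $|y_i|=2$) with the doubled grading on $\mathcal{A}$ used in \cref{proposition:ample-comparison}; and second, verifying that the Zhang twist produced by \cref{proposition:quaternionic-is-graded-clifford} does not alter the sheaf-of-algebras structure on $\mathbb{P}^2$ up to the relevant isomorphism, which ultimately follows because the twist is by a character of the degree $1$ part and is invisible after restricting to the even subalgebra over the center. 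All remaining steps are formal consequences of \cref{proposition:ample-comparison} and \cref{remark:graded-clifford-is-clifford}.
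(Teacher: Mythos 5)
The paper states this lemma without a printed proof, treating it as immediate from \cref{proposition:quaternionic-is-graded-clifford}, \cref{remark:graded-clifford-is-clifford} together with the example following \cref{proposition:ample-comparison}, and \cref{proposition:ample-comparison} applied to the structure morphism $\mathbb{P}^2\to\Spec k$ with $\mathcal{L}=\mathcal{O}_{\mathbb{P}^2}(1)$ --- exactly the route you take, so your argument is correct and matches the intended one. The only point to phrase with care is the Zhang-twist reduction: the paper only invokes insensitivity of the construction (i.e.\ of the resulting categories, via $\qgr A\cong\qgr A'$ and $\coh\mathcal{S}\cong\qgr A$) to Zhang twist, which is all that is needed here, rather than the finer claim that the twist leaves the sheaf-of-algebras structure on $\mathbb{P}^2$ untouched.
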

In particular we also have that
\begin{equation}
  p^*\mathcal{S}
  \cong
  p^*\clifford_{\mathbb{P}^2}(E\otimes_k\mathcal{O}_{\mathbb{P}^2},q,\mathcal{O}_{\mathbb{P}^2}(1))_0.
\end{equation}

By functoriality of the Clifford algebra construction we then get the following description.
\begin{lemma}
  There exists an isomorphism
  \begin{equation}
    p^*\mathcal{S}
    \cong
    \clifford_{\Bl_x\mathbb{P}^2}(E\otimes_k\mathcal{O}_{\Bl_x\mathbb{P}^2},q,\mathcal{O}_{\Bl_x\mathbb{P}^2}(1))_0.
  \end{equation}

  \begin{proof}
    This follows from the isomorphism
    \begin{equation}
      p^*\clifford_{\mathbb{P}^2}(E\otimes_k\mathcal{O}_{\mathbb{P}^2},q,\mathcal{O}_{\mathbb{P}^2}(1))_0
      \cong
      \clifford_{\Bl_x\mathbb{P}^2}(E\otimes_k\mathcal{O}_{\Bl_x\mathbb{P}^2},q,\mathcal{O}_{\Bl_x\mathbb{P}^2}(H))_0.
    \end{equation}
  \end{proof}
\end{lemma}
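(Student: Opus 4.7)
The plan is to deduce the statement directly from the preceding lemma, which identifies~$\mathcal{S}$ with the even Clifford algebra~$\clifford_{\mathbb{P}^2}(E\otimes_k\mathcal{O}_{\mathbb{P}^2},q,\mathcal{O}_{\mathbb{P}^2}(1))_0$ on~$\mathbb{P}^2$. Once this is in place, applying~$p^*$ reduces everything to a base-change statement for the even Clifford algebra construction along the blowup morphism~$p\colon\Bl_x\mathbb{P}^2\to\mathbb{P}^2$.

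Concretely, I would proceed in three short steps. First, pull back the isomorphism of the preceding lemma to obtain
\begin{equation}
  p^*\mathcal{S}\cong p^*\clifford_{\mathbb{P}^2}(E\otimes_k\mathcal{O}_{\mathbb{P}^2},q,\mathcal{O}_{\mathbb{P}^2}(1))_0.
\end{equation}
Second, invoke the base change property of the even Clifford algebra recalled at the end of \cref{subsection:clifford-algebras-with-values} (\cite[lemma~3.4]{MR1260714}), which applies because~$p$ is a morphism of schemes on which~$2$ is invertible, to get
\begin{equation}
  p^*\clifford_{\mathbb{P}^2}(E\otimes_k\mathcal{O}_{\mathbb{P}^2},q,\mathcal{O}_{\mathbb{P}^2}(1))_0\cong\clifford_{\Bl_x\mathbb{P}^2}(p^*(E\otimes_k\mathcal{O}_{\mathbb{P}^2}),p^*q,p^*\mathcal{O}_{\mathbb{P}^2}(1))_0.
\end{equation}
Third, identify the pulled-back data: because~$E$ is a constant~$k$\dash vector space, we have~$p^*(E\otimes_k\mathcal{O}_{\mathbb{P}^2})\cong E\otimes_k\mathcal{O}_{\Bl_x\mathbb{P}^2}$, while~$p^*\mathcal{O}_{\mathbb{P}^2}(1)\cong\mathcal{O}_{\Bl_x\mathbb{P}^2}(1)$ under the convention (also used in the excerpt) that the twisting sheaf on~$\Bl_x\mathbb{P}^2$ denotes the pullback of the hyperplane class~$H$ from~$\mathbb{P}^2$. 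Composing these identifications yields the desired isomorphism.

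There is essentially no obstacle: the main point is that the even Clifford algebra associated to an~$\mathcal{L}$\dash valued quadratic form was defined via the affine morphism to the total space of~$\mathcal{L}$, and both the total space construction and the formation of the Clifford algebra from \cref{subsection:clifford-algebras} commute with flat base change (in fact with arbitrary base change, by the cited lemma). The only mild subtlety is to keep track of the notational distinction between the hyperplane class on~$\mathbb{P}^2$ and its pullback on~$\Bl_x\mathbb{P}^2$; this is just a matter of making the convention explicit and does not require any additional argument.
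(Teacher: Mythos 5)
Your argument is correct and is essentially the paper's own proof: the paper likewise deduces the claim from the preceding lemma together with the base-change isomorphism $p^*\clifford_{\mathbb{P}^2}(E\otimes_k\mathcal{O}_{\mathbb{P}^2},q,\mathcal{O}_{\mathbb{P}^2}(1))_0\cong\clifford_{\Bl_x\mathbb{P}^2}(E\otimes_k\mathcal{O}_{\Bl_x\mathbb{P}^2},q,\mathcal{O}_{\Bl_x\mathbb{P}^2}(H))_0$, which rests on the compatibility of the even Clifford construction with pullback recalled in \cref{subsection:clifford-algebras-with-values}. Your additional remarks on identifying $p^*(E\otimes_k\mathcal{O}_{\mathbb{P}^2})$ and on the convention $\mathcal{O}_{\Bl_x\mathbb{P}^2}(1)=p^*\mathcal{O}_{\mathbb{P}^2}(1)=\mathcal{O}_{\Bl_x\mathbb{P}^2}(H)$ just make explicit what the paper leaves implicit.
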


Then by \cref{proposition:ample-comparison} we get the following description of the Clifford algebra, which puts the category of coherent sheaves over~$p^*\mathcal{S}$ on the same footing as that of the noncommutative~$\mathbb{P}^1$\dash bundle as in \cref{corollary:H-is-clifford}. This will allow us to compare the two constructions in \cref{section:comparison}.

\begin{corollary}
  \label{corollary:pS-is-clifford}
  There exists exists an equivalence of categories
  \begin{equation}
    \coh p^*\mathcal{S}
    \cong
    \qgr_{\mathbb{P}^1}\clifford_{\Sym_{\mathbb{P}^1}(\mathcal{O}_{\mathbb{P}^1}\oplus\mathcal{O}_{\mathbb{P}^1}(1))}(E\otimes_k\Sym_{\mathbb{P}^1}(\mathcal{O}_{\mathbb{P}^1}\oplus\mathcal{O}_{\mathbb{P}^1}(1)),q).
  \end{equation}

  \begin{proof}
    In \eqref{equation:F1-BlxP2} we have seen the classical isomorphism
    \begin{equation}
      \Bl_x\mathbb{P}^2\cong\relProj\Sym_{\mathbb{P}^1}(\mathcal{O}_{\mathbb{P}^1}\oplus\mathcal{O}_{\mathbb{P}^1}(1)).
    \end{equation}
    We apply \cref{proposition:ample-comparison} to the morphism~$\pi\colon\Bl_x\mathbb{P}^2\cong\mathbb{F}_1\to\mathbb{P}^1$, where
    \begin{equation}
      \begin{aligned}
        \mathcal{E}&\coloneqq E\otimes_k\mathcal{O}_{\mathbb{P}^2}, \\
        \mathcal{L}&\coloneqq p^*(\mathcal{O}_{\mathbb{P}^2}(1)),
      \end{aligned}
    \end{equation}
    and therefore
    \begin{equation}
      \mathcal{A}\cong\Sym_{\mathbb{P}^1}(\mathcal{O}_{\mathbb{P}^1}\oplus\mathcal{O}_{\mathbb{P}^1}(1))
    \end{equation}
    using the identification
    \begin{equation}
      \HH^0(\mathbb{P}^1,\mathcal{O}_{\mathbb{P}^1}\oplus\mathcal{O}_{\mathbb{P}^1}(1))=\HH^0(\mathbb{P}^2,\mathcal{O}_{\mathbb{P}^2}(1))
    \end{equation}
    In this situation we have that~$p^*(\mathcal{O}_{\mathbb{P}^2}(1)$ corresponds to the shift by~1\ in the sheaf of graded algebras~$\Sym_{\mathbb{P}^1}(\mathcal{O}_{\mathbb{P}^1}\oplus\mathcal{O}_{\mathbb{P}^1}(1))$ on~$\mathbb{P}^1$. The result follows.
  \end{proof}
\end{corollary}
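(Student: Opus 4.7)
The plan is to combine the even Clifford algebra description on~$\Bl_x\mathbb{P}^2$ obtained in the previous lemma with the relative Proj presentation of~$\mathbb{F}_1$ over~$\mathbb{P}^1$, and then invoke \cref{proposition:ample-comparison} to transport the Clifford picture from~$\Bl_x\mathbb{P}^2$ down to~$\mathbb{P}^1$. More concretely, the preceding lemma already provides
\begin{equation*}
  p^*\mathcal{S} \cong \clifford_{\Bl_x\mathbb{P}^2}(E\otimes_k\mathcal{O}_{\Bl_x\mathbb{P}^2}, q, p^*\mathcal{O}_{\mathbb{P}^2}(1))_0,
\end{equation*}
so the category~$\coh p^*\mathcal{S}$ is the category of coherent modules on~$\Bl_x\mathbb{P}^2$ over this sheaf of algebras.

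Next I would set up the correct relative geometry. Under the classical isomorphism~$\mathbb{F}_1\cong\Bl_x\mathbb{P}^2$ appearing in \eqref{equation:F1-BlxP2}, the morphism~$\pi\colon\mathbb{F}_1\to\mathbb{P}^1$ realises~$\Bl_x\mathbb{P}^2$ as~$\relProj_{\mathbb{P}^1}\Sym_{\mathbb{P}^1}(\mathcal{O}_{\mathbb{P}^1}\oplus\mathcal{O}_{\mathbb{P}^1}(1))$. I would take the line bundle~$\mathcal{L}\coloneqq p^*\mathcal{O}_{\mathbb{P}^2}(1)$, which is the~$\mathcal{O}(1)$ of this relative Proj presentation and is therefore~$\pi$\dash relatively ample; pushing forward one computes~$\pi_*\mathcal{L}\cong\mathcal{O}_{\mathbb{P}^1}\oplus\mathcal{O}_{\mathbb{P}^1}(1)$ (matching the identification of global sections~$\mathrm{H}^0(\mathbb{P}^1,\mathcal{O}_{\mathbb{P}^1}\oplus\mathcal{O}_{\mathbb{P}^1}(1))=\mathrm{H}^0(\mathbb{P}^2,\mathcal{O}_{\mathbb{P}^2}(1))$), and so the graded algebra from \eqref{equation:algebra-A} is
\begin{equation*}
  \mathcal{A} = \bigoplus_{n\geq 0}\pi_*(\mathcal{L}^{\otimes n}) \cong \Sym_{\mathbb{P}^1}(\mathcal{O}_{\mathbb{P}^1}\oplus\mathcal{O}_{\mathbb{P}^1}(1)).
\end{equation*}

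With these choices in hand I would apply \cref{proposition:ample-comparison} to the vector bundle~$\mathcal{E}=E\otimes_k\mathcal{O}_{\mathbb{P}^1}$ on~$\mathbb{P}^1$ and its pullback~$\pi^*\mathcal{E}=E\otimes_k\mathcal{O}_{\mathbb{F}_1}$, together with the~$\mathcal{L}$\dash valued quadratic form~$q$ obtained from the previous lemma. The proposition identifies the coherent sheaf of even Clifford algebras~$\clifford_0(\pi^*\mathcal{E},q,\mathcal{L})$ on~$\mathbb{F}_1$ with the sheaf on~$\mathbb{F}_1$ associated (via relative Serre's theorem) to the graded~$\mathcal{A}$\dash algebra~$\clifford_{\mathcal{A}}(\mathcal{E}\otimes_{\mathcal{O}_{\mathbb{P}^1}}\mathcal{A},Q)$. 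Combining this with the previous lemma and passing to module categories through the equivalence~$\Qcoh\Bl_x\mathbb{P}^2\cong\QGr_{\mathbb{P}^1}\mathcal{A}$ (and noting the isomorphism is finite-type so restricts to coherent/noetherian modules) yields the claimed description of~$\coh p^*\mathcal{S}$.

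The bulk of the work has been done in \cref{proposition:ample-comparison} and in identifying~$p^*\mathcal{S}$ as an even Clifford algebra sheaf, so the only genuinely new piece is the geometric identification of~$\mathcal{L}=p^*\mathcal{O}_{\mathbb{P}^2}(1)$ as the relative~$\mathcal{O}(1)$ for the projection~$\pi$ and the resulting computation of~$\pi_*\mathcal{L}$. This is where I expect the main (though fairly mild) obstacle: one must be careful that the blowup point~$x$ is chosen so that the~$\mathbb{P}^1\cong\mathbb{F}_1\to\mathbb{P}^1$ coming from lines through~$x$ is the projection~$\pi$ in the relative Proj presentation, and that the pullback of a line in~$\mathbb{P}^2$ equals a fibre of~$\pi$ plus the exceptional divisor, giving the splitting~$\pi_*\mathcal{L}\cong\mathcal{O}_{\mathbb{P}^1}\oplus\mathcal{O}_{\mathbb{P}^1}(1)$. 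Once that identification is in place the rest is a direct application of the preceding machinery.
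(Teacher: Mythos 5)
Your proposal is correct and follows essentially the same route as the paper: identify $\Bl_x\mathbb{P}^2\cong\mathbb{F}_1\cong\relProj_{\mathbb{P}^1}\Sym_{\mathbb{P}^1}(\mathcal{O}_{\mathbb{P}^1}\oplus\mathcal{O}_{\mathbb{P}^1}(1))$, take $\mathcal{L}=p^*\mathcal{O}_{\mathbb{P}^2}(1)$ as the relative $\mathcal{O}(1)$ so that $\mathcal{A}\cong\Sym_{\mathbb{P}^1}(\mathcal{O}_{\mathbb{P}^1}\oplus\mathcal{O}_{\mathbb{P}^1}(1))$, and apply \cref{proposition:ample-comparison} together with the preceding lemma identifying $p^*\mathcal{S}$ with the even Clifford algebra on $\Bl_x\mathbb{P}^2$. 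Your added care about $\pi_*\mathcal{L}$ and the relative Serre equivalence only makes explicit what the paper leaves implicit.
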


We state and prove the following easy observation, and come back to this in \cref{remark:gldim-2}.
\begin{proposition}
  \label{proposition:gldim-2}
  The global dimension of~$p^*\mathcal{S}$ is~2.

  \begin{proof}
    Because we have blown up a point in the Azumaya locus, we have that~$\mathcal{S}|_{\mathbb{P}^2\setminus\{x\}}\cong (p^*\mathcal{S})|_{\Bl_x\mathbb{P}^2\setminus E}$. Because we can check global dimension in the stalks, we conclude that the global dimension is~$\leq 2$ in the points of~$\Bl_x\mathbb{P}^2\setminus E$. For points on~$E$ we use that the global dimension of an Azumaya algebra is equal to the global dimension of the ring.
  \end{proof}
\end{proposition}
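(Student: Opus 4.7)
The plan is to establish the bound by checking global dimension stalk-by-stalk on $\Bl_x\mathbb{P}^2$, splitting into two regions according to whether the point lies on the exceptional divisor $E$ or not. The underlying geometric facts needed are that $p$ restricts to an isomorphism $\Bl_x\mathbb{P}^2\setminus E \xrightarrow{\sim} \mathbb{P}^2\setminus\{x\}$, that $\mathcal{S}$ already has global dimension $2$ as a maximal order coming from a $3$-dimensional Artin--Schelter regular algebra on the smooth surface $\mathbb{P}^2$, and that $x$ was chosen in the Azumaya locus of $\mathcal{S}$.

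First I would reduce the problem to a local one: the global dimension of a sheaf of coherent algebras on a noetherian scheme equals the supremum of the global dimensions of its stalks, so it suffices to prove $\gldim (p^*\mathcal{S})_y \leq 2$ for every closed point $y \in \Bl_x\mathbb{P}^2$ (the lower bound $\geq 2$ is automatic because a torsion skyscraper at a general point of $\Bl_x\mathbb{P}^2$ needs a length-two projective resolution over the underlying regular local ring of dimension~$2$).

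For $y \notin E$, the map $p$ identifies a Zariski neighborhood of $y$ with a neighborhood of $p(y) \in \mathbb{P}^2\setminus\{x\}$ and carries $p^*\mathcal{S}$ to $\mathcal{S}$ under this identification, so $(p^*\mathcal{S})_y \cong \mathcal{S}_{p(y)}$; since $\mathcal{S}$ has global dimension $2$ on $\mathbb{P}^2$, we are done in this case. For $y \in E$, I would use that $x$ lies in the Azumaya locus: by definition, there exist an étale neighborhood $U\to\mathbb{P}^2$ of $x$ and an integer $n$ such that $\mathcal{S}|_U \cong M_n(\mathcal{O}_U)$. Pulling back along $p$ yields $(p^*\mathcal{S})|_{p^{-1}(U)} \cong M_n(\mathcal{O}_{p^{-1}(U)})$, and $p^*\mathcal{S}$ is therefore Azumaya in a neighborhood of $E$. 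Since Morita equivalence preserves global dimension, the stalk $(p^*\mathcal{S})_y$ has the same global dimension as $\mathcal{O}_{\Bl_x\mathbb{P}^2, y}$, which is a regular local ring of Krull dimension $2$ because $\Bl_x\mathbb{P}^2$ is a smooth surface.

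The only mild subtlety in carrying out this plan is the handling of the Azumaya locus under pullback: I need to make sure that the étale-local trivialization of $\mathcal{S}$ near $x$ really does pull back to an étale-local trivialization of $p^*\mathcal{S}$ along $E$, which is a formal consequence of $p^*$ being a monoidal functor on quasi-coherent sheaves of algebras together with the fact that $p$ is an isomorphism away from $E$ and surjective onto $U$. With that in hand, both cases give an upper bound of $2$, proving the proposition.
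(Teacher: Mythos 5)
Your proposal is correct and follows essentially the same route as the paper: check global dimension at stalks, identify $(p^*\mathcal{S})_y$ with $\mathcal{S}_{p(y)}$ away from the exceptional divisor where $\mathcal{S}$ already has global dimension~$2$, and for points on~$E$ use that $x$ lies in the Azumaya locus so that the stalk is an Azumaya algebra over the regular $2$\dash dimensional local ring, whose global dimension it therefore shares. Your additional details (the lower bound via a skyscraper and the pullback of the étale trivialization) merely make explicit what the paper's terse argument leaves implicit.
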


\section{Comparing the two constructions}
\label{section:comparison}
By \cref{section:nc-P1-as-clifford,section:blowup-as-clifford} the categories~$\qgr(\mathbb{S}({}_f(\mathcal{O}_{\mathbb{P}^1}(1))_{\id}))$ and~$\coh(p^*\mathcal{S})$ are equivalent to a category of the form
\begin{equation}
  \label{equation:final-description}
  \qgr\Big( \clifford_{\Sym(\mathcal{O}_{\mathbb{P}^1} \oplus \mathcal{O}_{\mathbb{P}^1}(1))}\big(E \otimes_k \Sym(\mathcal{O}_{\mathbb{P}^1} \oplus \mathcal{O}_{\mathbb{P}^1}(1)), q\big) \Big)
\end{equation}
where~$q$ is obtained from a surjective map~$\Sym^2(E) \rightarrow V$. Using the isomorphism~$\Bl_x\mathbb{P}^2\cong\mathbb{F}_1$ and the blowup~$p\colon\Bl_x\mathbb{P}^2\to\mathbb{P}^2$ we also write the associated category using
\begin{equation}
  \label{equation:final-description-as-blowup}
  p^*\clifford_{\mathbb{P}^2}(E\otimes_k\mathcal{O}_{\mathbb{P}^2},q,\mathcal{O}_{\mathbb{P}^2}(1))_0.
\end{equation}
by inverting the construction in \cref{subsection:blowing-up-clifford-algebras}.

Using this description the comparison of the abelian categories boils down to understanding how the geometric data defining both categories can be compared. We do this by explaining how we can go back from the data as in \eqref{equation:final-description} to the morphism~$f\colon\mathbb{P}^1\to\mathbb{P}^1$, and how this gives an inverse to the results of \cref{section:nc-P1-as-clifford}.

In \cref{subsection:geometry-linear-systems} we discuss the geometry of the linear systems we are interested in: we will need to understand the classification of pencils of binary quartics \cite{MR1636607} and pencils of conics \cite{MR0432666}, in order to set up a correspondence between pencils of binary quartics on one hand, and nets of conics together with the choice of a smooth conic in the net on the other. To set up the correspondence we need to discuss a branched covering of the dual of the net of conics in some detail, this is done in \cref{subsection:branched-coverings}. This allows us to obtain the comparison between the geometric data in \cref{subsection:geometric-comparison}. Finally we discuss the comparison of the categories in \cref{subsection:categorical-comparison}.

In \cref{subsection:final-remarks} we discuss how the constructions which are compared in this paper relate to other notions of noncommutative blowing up and noncommutative~$\mathbb{P}^1$\dash bundles. In particular, there is a conjectural relationship between the constructions in this paper and these other notions, obtained by suitably degenerating the construction. And we discuss how the Hochschild cohomology of these categories is expected to behave, based on the automorphisms of the geometric data.

\subsection{Nets of conics and pencils of binary quartics}
\label{subsection:geometry-linear-systems}
In this section we will recall some results on linear systems which we will need to relate the two constructions. The two types which we want to compare are nets of conics (which are used to define graded Clifford algebras) and pencils of binary quartics (which are used to define noncommutative~$\mathbb{P}^1$\dash bundles). In \cref{subsection:branched-coverings} we will construct a natural~$4:1$\dash cover of~$\mathbb{P}^2$ by~$\mathbb{P}^2$ associated to a net of conics, which will allow us to compare pencils of binary quartics to nets of conics together with the choice of a smooth conic in the net, as in \cref{subsection:geometric-comparison}.

\paragraph{Pencils of conics} {\ } \\
As a tool in setting up the correspondence between nets of conics together with a point and pencils of binary quartics, we need to say a few words on pencils of conics. These are~2\dash dimensional subspaces of~$\HH^0(\mathbb{P}^2,\mathcal{O}_{\mathbb{P}^2}(2))$. Their classification is completely classical. It is also the first (non-trivial) case of Segre's classification of pencils of quadrics using Segre symbols. It is given in \cref{table:pencils-of-conics}, and the main observation is that there is a correspondence between the base locus and the types of singular conics which appear in the pencil.

\begin{table}[ht]
  \centering
  \begin{tabular}{cccc}
    \toprule
    Segre symbol & base locus  & \# singular fibres & \# double lines \\
    \midrule
    $[1,1,1]$    & $(1,1,1,1)$ & 3                  & 0 \\
    $[2,1]$      & $(2,1,1)$   & 2                  & 0 \\
    $[3]$        & $(3,1)$     & 2                  & 0 \\
    $[(1,1),1]$  & $(2,2)$     & 2                  & 1 \\
    $[(2,1)]$    & $(4)$       & 1                  & 1 \\
    \bottomrule
  \end{tabular}
  \caption{Pencils of conics}
  \label{table:pencils-of-conics}
\end{table}
The number of singular fibres and the number of double lines will allow us to describe the ramification of~$f\colon\mathbb{P}^1\to\mathbb{P}^1$ as in \cref{table:case-by-case}.

\paragraph{Nets of conics} {\ } \\
In \cref{subsection:graded-clifford-algebras}~$n-1$\dash dimensional linear systems of~$n-1$\dash dimensional quadrics were used to define graded Clifford algebras. We are interested in the special case of nets of conics, i.e.~of~3\dash dimensional subspaces of~$\HH^0(\mathbb{P}^2,\mathcal{O}_{\mathbb{P}^2}(2))$.

We will consider a net of conics as a surjective morphism~$\phi\colon\Sym^2E\to V$, where~$E$ and~$V$ are~3\dash dimensional vector spaces. Then the net itself is~$\mathbb{P}(V^\vee)$, whilst the conics live in~$\mathbb{P}(E)$. Associated to this there is the \emph{discriminant} of the net, which is the locus in~$\mathbb{P}(V^\vee)$ of the singular conics in the net. It is also known as the \emph{Hessian curve}.

The \emph{Jacobian} is the union of the singular points of each conic in~$\mathbb{P}(E)$. Both the discriminant and Jacobian are cubics, but they live in different spaces and they are not necessarily of the same type.

In \cref{subsection:branched-coverings} we will construct the morphism~$\Theta\colon\mathbb{P}(E)\to\mathbb{P}(V)$, sending a point to the subpencil which has this point in its baselocus. This is a~$4:1$\dash cover. The \emph{branch curve} inside~$\mathbb{P}(V)$ is the locus of points whose fibres do not have~4~distinct points, Generically the branch curve is the dual of the discriminant, hence it is of degree~6, but if the discriminant is zero the degree is lower.

The \emph{ramification curve} inside~$\mathbb{P}(E)$ are those points which are multiple points of a fiber. Using \cref{table:pencils-of-conics} this also means that the associated subpencil of conics has at most two singular points, hence the ramification curve is equal to the Jacobian. In \cref{table:curves} we give an overview of all the curves associated to a net of conics. Observe that the point scheme for an Artin--Schelter regular Clifford algebra is a double cover of the ramification curve, and hence coincides with the double cover of the discriminant constructed by considering the two lines in~$\mathbb{P}(E)$ associated to the singular conic parametrised by a point on the discriminant. Hence the curves living in the same ambient~$\mathbb{P}^2$ coincide.

\begin{table}
  \centering
  \begin{tabular}{clcc}
    \toprule
                         & curve                        & ambient space        & degree \\
    \midrule
    $\Delta$             & discriminant                 & $\mathbb{P}(V^\vee)$ & 3 \\
    $J$                  & Jacobian                     & $\mathbb{P}(E)$      & 3 \\
    $R$                  & ramification curve           & $\mathbb{P}(E)$      & 3\\
    $B$                  & branch curve                 & $\mathbb{P}(V)$      & $\leq 6$ \\
    $C$                  & point scheme                 & $\mathbb{P}(E^\vee)$ & 3 \\
    $\widetilde{\Delta}$ & double cover of discriminant & $\mathbb{P}(E^\vee)$ & 3 \\\addlinespace
                         & conic in the net             & $\mathbb{P}(E)$      & 2 \\
    \bottomrule
  \end{tabular}
  \caption{Curves associated to a net of conics}
  \label{table:curves}
\end{table}

The classification of nets of conics is obtained in \cite{MR0432666}. We are only interested in basepoint-free nets of conics, otherwise the associated graded Clifford algebra is not Artin--Schelter regular. The classification in this case was summarized in \cref{table:basepoint-free-nets-of-conics}, with the labeling from \cite{MR0432666}. The double lines in the net of conics necessarily correspond to the singularities in the discriminant for a basepoint-free net of conics.


We now give an example of a net of conics. It will be important in understanding a special element in the classification of noncommutative~$\mathbb{P}^1$\dash bundles, see also \cref{example:type-A-pencil-of-quartics,subsection:final-remarks}.

\begin{example}
  \label{example:type-D-net-of-conics}
  Consider the net of conics given by~$x^2,y^2,z^2+2xy$. It is (up to base change) the only net of conics of type~D. Its discriminant is a conic and line in general position, and the two singularities of the discriminant correspond to the double lines~$x^2$ and~$y^2$. The Jacobian turns out to be a triangle of lines, but we will not use this.
\end{example}

\paragraph{Pencils of binary quartics} {\ } \\
The only input needed for the construction of a noncommutative~$\mathbb{P}^1$\dash bundle in the sense of \cite{1503.03992v4} is a finite morphism~$f\colon\mathbb{P}^1\to\mathbb{P}^1$ of degree~4. This data is equivalent to the data of a basepoint-free pencil of binary quartics, i.e.~a~2\dash dimensional subspace of~$\HH^0(\mathbb{P}^1,\mathcal{O}_{\mathbb{P}^1}(4))$. Indeed, if~$\langle f_1,f_2\rangle$ is such a pencil, then~$[x:y]\mapsto[f_1(x,y):f_2(x,y)]$ is a well-defined finite morphism~$f\colon\mathbb{P}^1\to\mathbb{P}^1$. These pencils have been studied in detail in \cite{MR1636607}, and we will quickly recall the relevant results here.

As for nets of conics, we can associate the \emph{Jacobian} and the \emph{discriminant} to a pencil of binary quartics. In this case the Jacobian represents the branch points of the associated morphism, and the discriminant are the images of these. We can moreover describe the branching behaviour of the associated morphism using its \emph{symbol}: algebraically it is obtained by considering the roots of the discriminant and listing the multiplicities of the remaining factors. Geometrically it describes the cycle type of the monodromy around each branch point. Its notation is not to be confused with that of Segre symbols as in \cref{table:pencils-of-conics}.

The classification is given in \cref{table:case-by-case}, where we have already included the comparison to the net of conics, from \cref{theorem:quadruples-are-isomorphic}.

\begin{example}
  \label{example:type-A-pencil-of-quartics}
  The ``most degenerate'' morphism~$f\colon\mathbb{P}^1\to\mathbb{P}^1$ is given by~$[x^4:y^4]$. It is ramified in~$[0:1]$ and~$[1:0]$ only, where~4 branches come together, hence its symbol is~$[(4)(4)]$. It corresponds to type~A in the classification of \cref{table:case-by-case}.
\end{example}

\begin{remark}
  In \cite[\S2]{MR1636607} it is remarked that there is no pencil of binary quartics whose symbol is~$[(2,2)(2,2)(3,1)]$, which is excluded by considering the monodromy of the would-be corresponding morphism~$f\colon\mathbb{P}^1\to\mathbb{P}^1$ of degree~4. Using the correspondence we have set up between nets of conics and pencils of binary quartics we can also argue that this case cannot occur using the geometry of cubics: there is no cubic curve with~2~singularities and~1~inflection point.
\end{remark}

%

\subsection{Branched coverings and nets of conics}
\label{subsection:branched-coverings}
Let~$V$ and~$E$ be~3\dash dimensional vector spaces, and let
\begin{equation}
  \phi\colon\Sym^2E\to V
\end{equation}
be a surjection. This corresponds to a net of conics inside~$\mathbb{P}(E)$ parametrised by~$\mathbb{P}(V^\vee)$. We will assume that the net is basepoint-free, such that it defines a graded Clifford algebra which is Artin--Schelter regular, as in \cref{subsection:graded-clifford-algebras}.

Define
\begin{equation}
  \begin{aligned}
    S&\coloneqq\Sym(E^\vee), \\
    R&\coloneqq\Sym(V^\vee)
  \end{aligned}
\end{equation}
and consider~$S$ as a graded~$R$\dash algebra via~$\phi^\vee\colon V^\vee \rightarrow \Sym^2 E^\vee$. Moreover we define
\begin{equation}
  A\coloneqq S/SR_{\geq 1}.
\end{equation}

For the notion of a (graded) Frobenius algebra we refer to \cite[\S3]{MR1388568}.
\begin{lemma}
  \label{lemma:graded-Frobenius}
  The algebra~$A$ is a graded Frobenius algebra with Hilbert series~$1,3,3,1$.
\end{lemma}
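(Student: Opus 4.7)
The plan is to recognise $A$ as a graded complete intersection cut out by three quadrics in three variables, with the basepoint-free assumption providing precisely the regularity needed to run standard commutative algebra machinery.

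First I would choose a basis $f_1, f_2, f_3 \in S_2$ of the three-dimensional subspace $\phi^\vee(V^\vee) \subset \Sym^2 E^\vee$. By construction $SR_{\geq 1}$ is the ideal $(f_1, f_2, f_3) \subset S$, so $A = S/(f_1, f_2, f_3)$ is the quotient of a polynomial ring in three variables by three homogeneous quadrics. The basepoint-free hypothesis on the net says exactly that the common vanishing locus of $f_1, f_2, f_3$ in $\mathbb{P}(E) \cong \mathbb{P}^2$ is empty, equivalently that it reduces to the origin in $\Spec S \cong \mathbb{A}^3$. Since this cut-out set has codimension three in the Cohen--Macaulay ring $S$, the elements $f_1, f_2, f_3$ constitute a homogeneous system of parameters and hence form a regular sequence.

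With regularity secured, the Hilbert series is a direct application of the Koszul resolution:
\[
  H_A(t) = H_S(t) \cdot (1-t^2)^3 = \frac{(1-t^2)^3}{(1-t)^3} = (1+t)^3 = 1 + 3t + 3t^2 + t^3,
\]
giving the claimed $1,3,3,1$.

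For the Frobenius property, I would invoke that a graded complete intersection is Gorenstein: since $f_1, f_2, f_3$ is a regular sequence in the Gorenstein ring $S$, the quotient $A$ is Gorenstein. Being finite-dimensional and local, this forces the socle of $A$ to be one-dimensional and concentrated in the top nonzero degree (degree $3$, consistent with the palindromic Hilbert series). Then the composition
\[
  A \otimes_k A \xrightarrow{\mu} A \twoheadrightarrow A_3 \cong k
\]
of multiplication with projection onto the socle is a non-degenerate associative graded pairing, exhibiting $A$ as a graded Frobenius algebra. The only step where something geometric really enters is the translation of basepoint-freeness into a regular sequence; once that is in hand the remainder is formal.
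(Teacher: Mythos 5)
Your proposal is correct and follows essentially the same route as the paper: basepoint-freeness of the net gives a regular sequence of three quadrics, the Koszul/complete-intersection computation yields the Hilbert series $(1+t)^3$, and the zero-dimensional complete intersection being Gorenstein gives the graded Frobenius property. You simply spell out in more detail the steps (homogeneous system of parameters, socle pairing) that the paper cites or asserts.
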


\begin{proof}
  Because the net of conics is basepoint-free, any choice of basis for the net gives a regular sequence in~$S$. The Hilbert series of the associated complete intersection is~$(1+t)^3$, e.g.~using \cite{MR0485835}. Likewise by basepoint-freeness we have that~$A$ is Frobenius because it defines a zero-dimensional complete intersection (hence it is Gorenstein).
\end{proof}


If~$x\in S$, then we will denote~$\overline{x}$ for the induced element in~$A$. If~$\overline{x}\in A_0$ or~$A_1$, then we will also use the notation~$\overline{x}$ to indicate the corresponding element in~$S_0$ or~$S_1$ depending on the context.

There exists a morphism
\begin{equation}
  \trace\colon S\to S/(RS_0\oplus RS_1\oplus RS_2)
\end{equation}
which defines a non-degenerate pairing
\begin{equation}
  \label{equation:trace-pairing}
  S\otimes_RS\to R(-3)\otimes A_3:p\otimes q\mapsto\trace(pq),
\end{equation}
where we have used the isomorphism
\begin{equation}
  S/(RS_0\oplus RS_1\oplus RS_2)\cong R(-3)\otimes A_3.
\end{equation}

We will use dual~$R$\dash bases~$\{e_0,\dotsc,e_7\}$ and~$\{f_0,\dotsc,f_7\}$ for~$S$, where the degrees of the elements~$e_i, f_j$ are
\begin{equation}
 |e_i|
  = |f_{7-i}| =
  \begin{cases}
    0 & i=0 \\
    1 & i=1,2,3 \\
    2 & i=4,5,6 \\
    3 & i=7.
  \end{cases}
\end{equation}
Moreover we can and will assume that~$e_0=f_7=1$ (and therefore~$\overline{e}_7=\overline{f}_0$), $f_4=e_1$, $f_5=e_2$, $f_6=e_3$, and~$\overline{f}_1=\overline{e}_4$, $\overline{f}_2=\overline{e}_5$, $\overline{f}_3=\overline{e}_6$. If we introduce the structure constants
\begin{equation}
  \overline{e}_i\overline{e}_j=\sum_{l=1}^3c_{i,j,l}\overline{e}_{l+3}
\end{equation}
for the multiplication in~$A$, where~$i,j=1,2,3$, then we have the following lemma.
\begin{lemma}
  \label{lemma:structure-constants-permutation-invariant}
  The structure constants~$c_{i,j,l}$ are invariant under permutation of~$i,j$ and~$l$.
\end{lemma}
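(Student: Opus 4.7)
The plan is to express the structure constant $c_{i,j,l}$ as a trace of the triple product $e_ie_je_l$ in $S$, so that the $S_3$ symmetry of $c_{i,j,l}$ becomes manifest from the commutativity of $S$. The key ingredient is that dual $R$\dash bases for the trace pairing allow one to read off the coefficients of an $R$\dash expansion as trace pairings.

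First I would unpack what the structure constants are at the level of $S$. Since $e_ie_j \in S_2$, I would decompose
\begin{equation*}
  e_ie_j = \sum_{k=0}^7 h_k e_k, \qquad h_k \in R \text{ of appropriate degree,}
\end{equation*}
using that $S$ is free on $\{e_0,\dotsc,e_7\}$ over $R$. Degree considerations (with $R$ sitting in even $S$\dash degrees via $\phi^\vee$) force $h_k=0$ unless $|e_k|$ has the same parity as $|e_ie_j|=2$; thus the only nonzero contributions come from $k=0$ (with $h_0\in R_1$) and $k=4,5,6$ (with $h_k\in R_0=k$). Reducing modulo $SR_{\geq 1}$ matches this decomposition with the one defining the structure constants, which gives $c_{i,j,l}=h_{l+3}$ for $l=1,2,3$.

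Next I would use the dual basis property: for any $x=\sum_k h_k e_k\in S$ and any $j$, the $R$\dash linearity of $\trace$ together with $\trace(e_k f_j)=\delta_{k,j}$ (the defining property of a dual $R$\dash basis for the pairing in \eqref{equation:trace-pairing}) gives $h_j=\trace(xf_j)$. Applied to $x=e_ie_j$ and $j=l+3$, and using the identification $f_{l+3}=e_l$ from the normalisations laid down just before the lemma, this yields
\begin{equation*}
  c_{i,j,l} = \trace(e_ie_j \cdot f_{l+3}) = \trace(e_ie_je_l).
\end{equation*}
A quick degree check confirms that $e_ie_je_l \in S_3$, so the trace lands in $R_0\otimes A_3=k$, consistent with $c_{i,j,l}$ being a scalar.

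Finally, since $S=\Sym(E^\vee)$ is commutative, $e_ie_je_l$ is unchanged under any permutation of $i,j,l$, and hence so is $c_{i,j,l}$. The only step requiring a little care is the passage from the graded $R$\dash expansion of $e_ie_j$ to the structure constants, but this is purely bookkeeping with the chosen degrees of $e_i$ and $f_j$; I do not expect any genuine obstacle.
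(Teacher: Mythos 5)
Your proposal is correct and follows essentially the same route as the paper: both arguments reduce the claim to the identity $c_{i,j,l}=\trace(e_ie_je_l)$, obtained from the dual-basis property $\trace(e_kf_j)=\delta_{k,j}$ together with the normalisation $f_{l+3}=e_l$, after which the symmetry is immediate from commutativity. The paper just computes $\trace(\overline{e_ae_be_d})$ directly using the structure constants, while you first expand $e_ie_j$ over the free $R$\dash basis and then extract the coefficients by the trace pairing -- the same idea with slightly more bookkeeping.
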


\begin{proof}
  We evaluate~$\trace(\overline{e_ae_be_d})$ for~$a,b,d=1,2,3$:
  \begin{equation}
    \begin{aligned}
      \trace(\overline{e_ae_be_d})
      &=\sum_{i=1}^3 c_{a,b,i} \trace ( \overline{e_{i+3}e_d} ) \\
      &=\sum_{i=1}^3 c_{a,b,i} \trace ( \overline{f_ie_d}  )\\
      &=\sum_d c_{a,b,i} \delta_{i,d} \\
      &=c_{a,b,d}.
    \end{aligned}
  \end{equation}
\end{proof}

By construction we have the identifications
\begin{equation}
\label{equation:comparison:AiE1}
  \begin{aligned}
    A_1&=E^\vee, \\
    A_2&=\Sym^2E^\vee/V^\vee, \\
    A_3&=\Sym^3E^\vee/E^\vee V^\vee,
  \end{aligned}
\end{equation}
and the multiplication~$\mult\colon A_1\otimes_kA_1\to A_2$ is nothing but the quotient map~$\Sym^2E^\vee\to\Sym^2E^\vee/V^\vee$, with the inclusion of~$V^\vee$ into~$\Sym^2E^\vee$ given by~$\phi^\vee$. Because~$A$ is graded Frobenius by \cref{lemma:graded-Frobenius} the multiplication in the algebra provides us with a duality in the form of a perfect pairing
\begin{equation}
  A_1\otimes A_2\to A_3
\end{equation}
which yields the identifications
\begin{equation}
\label{equation:comparison:AiE2}
  \begin{aligned}
    A_1^\vee&=A_2\otimes A_3^{-1}, \\
    A_2^\vee&=A_1\otimes A_3^{-1}.
  \end{aligned}
\end{equation}
If we define
\begin{equation}
  \alpha\colon
  A_1\otimes A_3^{-1}\to\Sym^2A_2\otimes A_3^{-2}:
  e_i\otimes\overline{e}_7^{-1}\mapsto\sum_{j=1}^3\overline{e}_i\overline{e}_j\cdot\overline{f}_j\otimes\overline{e}_7^{-2}
\end{equation}
and
\begin{equation}
  \label{equation:beta}
  \beta\colon
  \Sym^2A_2\otimes A_3^{-2}\to V:
  \overline{e}_{i+3}\cdot\overline{e}_{j+3}\otimes\overline{e}_7^{-2}\mapsto\phi(e_i^\vee\cdot e_j^\vee)
\end{equation}
then these morphisms are compatible with the algebra structure on~$A$ in the following way using \cref{lemma:structure-constants-permutation-invariant}.
\begin{lemma}
  \label{lemma:commutative-diagram-algebra-structure}
  The diagram
  \begin{equation}
    \begin{tikzcd}
      0 \arrow[r] & A_1\otimes A_3^{\otimes-1} \arrow[r, "\alpha"] \arrow[d, equals] & \Sym^2A_2\otimes A_3^{\otimes-2} \arrow[r, "\beta"] \arrow[d, equals] & V \arrow[d, equals] \arrow[r] & 0 \\
      0 \arrow[r] & A_2^\vee \arrow[r, "\mult^\vee"] & \Sym^2A_1^\vee \arrow[r, "\phi"] & V \arrow[r] & 0
    \end{tikzcd}
  \end{equation}
  where the vertical arrows are the identifications obtained from the graded Frobenius structure, is commutative.
\end{lemma}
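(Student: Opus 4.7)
My plan is to verify each of the two squares independently, exploiting the explicit dual bases set up before the statement. The key observation is that $\{\overline{e}_j\}_{j=1}^3$ is a basis of $A_1$ and $\{\overline{f}_j = \overline{e}_{j+3}\}_{j=1}^3$ is a basis of $A_2$ which are dual with respect to the multiplication pairing $A_1 \otimes A_2 \to A_3$: since $\trace(e_j f_l) = \delta_{jl}$ and $e_j f_l$ is homogeneous of degree~$3$, we have $\overline{e}_j \overline{f}_l = \delta_{jl} \overline{e}_7$ in $A_3$. The Frobenius identifications $A_1^\vee \cong A_2 \otimes A_3^{\otimes -1}$ and $A_2^\vee \cong A_1 \otimes A_3^{\otimes -1}$ therefore concretely send $\overline{e}_{j+3} \otimes \overline{e}_7^{-1} \mapsto e_j^\vee$ and $\overline{e}_j \otimes \overline{e}_7^{-1} \mapsto f_j^\vee$, where $e_j^\vee \in E$ denotes the dual of $\overline{e}_j$.

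For the right square, taking $\Sym^2$ of the first identification sends $\overline{e}_{i+3} \cdot \overline{e}_{j+3} \otimes \overline{e}_7^{-2}$ to $e_i^\vee \cdot e_j^\vee \in \Sym^2 E = \Sym^2 A_1^\vee$. Since $\phi\colon \Sym^2 E \to V$ is exactly the surjection defining the net of conics, its value on $e_i^\vee \cdot e_j^\vee$ is $\phi(e_i^\vee \cdot e_j^\vee)$, which matches the image prescribed by the definition of $\beta$ in~\eqref{equation:beta}.

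For the left square, given $a \in A_1$, the element $a \otimes \overline{e}_7^{-1}$ corresponds under the Frobenius isomorphism to the functional $\phi_a \in A_2^\vee$ characterised by $\phi_a(z) = az/\overline{e}_7 \in k$. Applying $\mult^\vee$ then yields the symmetric bilinear form $(x,y) \mapsto \phi_a(xy) = axy/\overline{e}_7$ on $A_1$ (commutativity of $A$, inherited from $S$, ensures $\mult$ factors through $\Sym^2 A_1$). On the other hand, $\alpha(a \otimes \overline{e}_7^{-1}) = \sum_{j=1}^3 a\overline{e}_j \cdot \overline{f}_j \otimes \overline{e}_7^{-2}$; reading this as an element of $\Sym^2 A_1^\vee$ via the Frobenius identification, its value on $(\overline{e}_i, \overline{e}_k)$ contracts, by the orthogonality $\overline{e}_i \overline{f}_j = \delta_{ij} \overline{e}_7$, to $\overline{e}_i \cdot a\overline{e}_k/\overline{e}_7$. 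Commutativity of $A$ again equates this with $a \overline{e}_i \overline{e}_k/\overline{e}_7$, matching $\mult^\vee(\phi_a)$ and establishing commutativity of the left square.

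The main subtlety is the convention for the embedding $\Sym^2 W^\vee \hookrightarrow (W \otimes W)^\vee$, which produces factors of $2$ that must cancel consistently on both sides of the contraction; \cref{lemma:structure-constants-permutation-invariant} is used here to guarantee that the bilinear form extracted from $\alpha(a \otimes \overline{e}_7^{-1})$ is genuinely symmetric in its two arguments, so that its identification with an element of $\Sym^2 A_1^\vee$ is unambiguous.
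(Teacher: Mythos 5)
Your proof is correct and follows essentially the same route as the paper: both squares are verified on the explicit dual bases via the Frobenius identifications, with the right square handled identically and your left-square contraction (using $\overline{e}_i\overline{f}_j=\delta_{ij}\overline{e}_7$ and commutativity of $A$) being a coordinate-free restatement of the paper's structure-constant computation, whose symmetry input is exactly \cref{lemma:structure-constants-permutation-invariant}. Your closing appeal to \cref{lemma:structure-constants-permutation-invariant} to make the identification with $\Sym^2A_1^\vee$ unambiguous is superfluous, since $\alpha$ lands in $\Sym^2A_2\otimes A_3^{\otimes-2}$ by definition, but this does not affect the correctness of the argument.
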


\begin{proof}
  We first discuss the square involving~$\alpha$. Note that
  \begin{equation}
    \begin{aligned}
      \alpha(e_i\otimes \overline{e}_7^{-1})
      &=\sum_{j=1}^3 \overline{e_ie_j}\cdot \overline{e}_{j+3}\otimes \overline{e}_7^{-2} \\
      &=\sum_{l,j=1}^3 c_{i,j,l} \overline{e}_{l+3} \cdot \overline{e}_{j+3}\otimes \overline{e}_7^{-2}
    \end{aligned}
  \end{equation}
  whereas the lower composition in this square is given by
  \begin{equation}
    \begin{aligned}
      e_i\otimes \overline{ e}^{-1}_7
      &\rightarrow \overline{e}_{i+3}^\vee \\
      &\rightarrow \sum_{p,q=1}^3 c_{p,q,i} \overline{e}_p^\vee\cdot \overline{e}_q^\vee \\
      &\rightarrow \sum_{p,q=1}^3 c_{p,q,i} \overline{e}_{p+3}\cdot \overline{e}_{q+3}\otimes \overline{e}^{-2}_7.
    \end{aligned}
  \end{equation}
  It now suffices to use the symmetry of the structure constants as in \cref{lemma:structure-constants-permutation-invariant}.

  The fact that the square involving~$\beta$ commutes follows immediately from the definition of~$\beta$ and the fact that the middle vertical identification is given by
  \begin{equation}
    \overline{e_{i+3}} \cdot \overline{e_{j+3}} \otimes \overline{e_7}^{-2} \mapsto \overline{e_i}^\vee \overline{e_j}^\vee.
  \end{equation}
\end{proof}

The trace pairing from \eqref{equation:trace-pairing} gives an identification
\begin{equation}
  \label{equation:S-dual}
  S^\vee\coloneqq\Hom_R(S,R) \cong S(3)\otimes A_3^{\otimes-1}: e_i^\vee \mapsto f_i\otimes e_7^{-1}
\end{equation}
of graded~$S$\dash modules. Using this identification, together with the~$R$\dash dualised multiplication~$S\otimes_RS\to S$ we get a copairing
\begin{equation}
  \label{equation:delta}
  \delta\colon S(-3)\otimes A_3\to S\otimes_RS
\end{equation}
such that
\begin{equation}
  \delta(u\otimes\overline{e}_7)=\sum_{i=0}^7ue_i\otimes f_i.
\end{equation}
Similar to \eqref{equation:ei-and-fi-commute} and \cite[lemma~3.24]{1503.03992v4} we see that~$\sum_{i=0}^7e_i\otimes f_i=\sum_{i=0}^7f_i\otimes e_i$ is a central element in~$S\otimes_RS$.

We will now do the preliminary algebraic constructions for the results of \cref{subsection:geometric-comparison}, by suitably describing everything in terms of graded modules. Consider the decomposition~$S=S_{\mathrm{even}}\oplus S_{\mathrm{odd}}$ of graded~$R$\dash modules into the part of even and odd grading. Then there exists a canonical identification
\begin{equation}
  S_{\mathrm{even}}/R=(R\otimes A_2)(-2).
\end{equation}
We moreover define a graded~$R$\dash module~$\Omega$ as the cokernel in the leftmost morphism of the Koszul sequence, i.e.
\begin{equation}
  \label{equation:truncated-koszul}
  0\rightarrow R(-6) \overset{\kappa}{\rightarrow} V\otimes R(-4)\rightarrow\Omega\rightarrow 0.
\end{equation}
We have that~$\Omega$ induces~$\mathrm{T}_{\mathbb{P}(V)}(-3)$ after sheafification, which in turn is isomorphic to~$\Omega_{\mathbb{P}(V)}^1$.

Finally consider the morphism
\begin{equation}
  \Phi\colon S_{\mathrm{odd}}(-3)\otimes A_3\to\Sym^2A_2\otimes_k R(-4)
\end{equation}
which is the composition of the inclusion into~$S(-3)\otimes A_3$, the morphism~$\delta$ from \eqref{equation:delta}, the projection~$S \twoheadrightarrow S_{\mathrm{even}}$ and the quotient~$S_{\mathrm{even}} \twoheadrightarrow S_{\mathrm{even}}/R \cong A_2 \otimes_k R(-2)$ in both factors of the tensor product, and the quotient~$A_2\otimes_kA_2\to\Sym^2A_2$.

\begin{proposition} \label{proposition:coker-Phi}
  There exists an isomorphism
  \begin{equation}
    \coker\Phi\cong\Omega\otimes A_3^{\otimes 2}.
  \end{equation}
  More precisely, the induced morphism
  \begin{equation}
    \Sym^2A_2\otimes_k R(-4)\to\coker\Phi\cong\Omega\otimes A_3^{\otimes2}
  \end{equation}
  is the composition of~$\beta\otimes\id_{R(-4)}$ and the quotient map in \eqref{equation:truncated-koszul}.
\end{proposition}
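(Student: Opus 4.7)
The proof proceeds by identifying $\coker\Phi$ with $\Omega \otimes A_3^{\otimes 2}$ via a diagram assembled from \cref{lemma:commutative-diagram-algebra-structure} and the truncated Koszul sequence \eqref{equation:truncated-koszul}. First, tensoring the exact sequence of \cref{lemma:commutative-diagram-algebra-structure} with $R(-4) \otimes A_3^{\otimes 2}$ yields an exact sequence
\begin{equation*}
  0 \to A_1 \otimes A_3 \otimes R(-4) \xrightarrow{\widetilde{\alpha}} \Sym^2 A_2 \otimes R(-4) \xrightarrow{\widetilde{\beta}} V \otimes R(-4) \otimes A_3^{\otimes 2} \to 0.
\end{equation*}
Composing $\widetilde{\beta}$ with the Koszul surjection $V \otimes R(-4) \otimes A_3^{\otimes 2} \twoheadrightarrow \Omega \otimes A_3^{\otimes 2}$ produces a surjection $\rho\colon \Sym^2 A_2 \otimes R(-4) \twoheadrightarrow \Omega \otimes A_3^{\otimes 2}$ whose kernel is an extension of $R(-6) \otimes A_3^{\otimes 2}$ by $A_1 \otimes A_3 \otimes R(-4)$. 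This $\rho$ is precisely the composite map described in the statement, so it suffices to show $\ker\rho = \im\Phi$.

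Next I would exploit the Frobenius structure on $A/k$ to write $S_{\mathrm{odd}}$ as a free graded $R$-module on generators in $A_1$ (degree $1$) and $A_3$ (degree $3$), so that after shifting and tensoring we obtain
\begin{equation*}
  S_{\mathrm{odd}}(-3) \otimes A_3 \;\cong\; \bigl(A_1 \otimes A_3 \otimes R(-4)\bigr) \;\oplus\; \bigl(A_3^{\otimes 2} \otimes R(-6)\bigr),
\end{equation*}
and analyse $\Phi$ on each summand. On the first summand, for $u \in A_1 = S_1$, the formula $\delta(u \otimes \overline{e}_7) = \sum_i u e_i \otimes f_i$ together with the parity bookkeeping (only indices with $|e_i|$ odd, i.e.~$i \in \{1,2,3,7\}$, contribute after projection to $S_{\mathrm{even}}/R$ in both factors) produces, after symmetrisation, exactly the formula for $\alpha$. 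Hence $\Phi = \widetilde{\alpha}$ on this summand, and $\widetilde{\beta} \circ \Phi = 0$ here. On the second summand, $\Phi$ is determined by its value on $e_7 \otimes \overline{e}_7$ and its $R$-linear extension. A direct computation expanding the products $e_7 \cdot e_i$ along the $R$-basis $\{e_0,\dotsc,e_7\}$ via the trace pairing \eqref{equation:trace-pairing} shows that $\widetilde{\beta}(\Phi(e_7 \cdot r \otimes \overline{e}_7))$ equals the Koszul image $\widetilde{\kappa}(r \otimes \overline{e}_7^{\otimes 2})$ up to a nonzero scalar, for every $r \in R$. Consequently $\rho \circ \Phi = 0$, and $\Phi$ factors through $\ker\rho$.

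Finally, the two calculations above also give surjectivity of $\Phi$ onto $\ker\rho$: the first hits all of $\ker\widetilde{\beta} = \widetilde{\alpha}(A_1 \otimes A_3 \otimes R(-4))$, while the second, read $R$-linearly, hits a complementary copy of $R(-6) \otimes A_3^{\otimes 2}$ inside $\widetilde{\beta}^{-1}(\im\widetilde{\kappa})$. Therefore the induced morphism $\coker\Phi \to \Omega \otimes A_3^{\otimes 2}$ is an isomorphism. The main obstacle is the calculation on the $A_3$-summand, namely identifying $\sum_i \overline{e_7 e_i} \otimes \overline{f_i}$ (modulo $R$ in both factors, after symmetrisation) with a lift of the Koszul differential. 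This rests on the description \eqref{equation:beta} of $\beta$ in terms of $\phi$: multiplication by $v^\vee \in V^\vee \subset R$ on $S$ is by construction the same as multiplication by $\phi^\vee(v^\vee) \in \Sym^2 E^\vee$, so the centrality identity $\sum_i e_i \otimes f_i = \sum_i f_i \otimes e_i$ in $S \otimes_R S$ translates exactly into the Koszul relation after passing to $V \otimes R(-4) \otimes A_3^{\otimes 2}$.
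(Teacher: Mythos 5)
Your proposal is correct and follows essentially the same route as the paper: split $S_{\mathrm{odd}}(-3)\otimes A_3$ into the part generated in degree one, where $\Phi$ is identified with $\alpha$ from \cref{lemma:commutative-diagram-algebra-structure}, and the $e_7$-component, where the composition with $\beta$ is shown to reproduce the Koszul map $\kappa$, so that $\coker\Phi\cong\coker(\kappa)\otimes A_3^{\otimes 2}=\Omega\otimes A_3^{\otimes 2}$. The only differences are cosmetic: you choose a splitting of $S_{\mathrm{odd}}$ where the paper works with the canonical submodule $A_1\otimes R(-1)$ and the induced map on its quotient $R(-6)\otimes A_3^{\otimes 2}$, and the trace-pairing computation you only sketch for the $e_7$-term (in the paper the explicit identity $\sum_{i,j}\trace(e_7f_if_j)\,\phi(f_j^\vee\cdot f_i^\vee)\otimes\overline{e}_7^{\,2}=\id_V\otimes\overline{e}_7^{\,2}$, proved via \cref{lemma:linear-algebra}) is the heart of the argument and should be carried out in full rather than inferred from the centrality of $\sum_ie_i\otimes f_i$.
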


\begin{proof}
  The module~$S_{\mathrm{odd}}$ contains canonically a graded~$R$\dash submodule~$A_1\otimes R(-1)$.  We first describe how~$\Phi$ acts on~$A_1\otimes A_3\otimes R(-4)$. Using the definition this map is given by
  \begin{equation}
    \overline{e}_i\otimes \overline{e}_7 \overset{\delta}{\mapsto} \sum_{j=0}^7 e_ie_j\cdot f_j \mapsto \sum_{j=0}^7 \overline{e_ie_j}\cdot \overline{f}_j  = \sum_{j=1}^3 \overline{e_ie_j}\cdot \overline{f}_j
  \end{equation}
  for~$i=1,2,3$ (where we used the fact that~$e_i e_j \not \in S_{\mathrm{even}}$ for~$j=0,4,5,6$ and~$e_ie_7 \in R$).

  In other words it is, up to tensoring with~$A_3^{-2}$, precisely the map~$\alpha$ in \cref{lemma:commutative-diagram-algebra-structure}. More precisely, it is given by the unique map~$\alpha'$ making the following diagram commute
\begin{equation}
    \begin{tikzcd}
      A_1\otimes A_3^{\otimes-1} \otimes R(-4) \arrow[r, "\alpha \otimes \id_{R(-4)}"] \arrow[d, "\cong"] & \Sym^2A_2\otimes A_3^{\otimes-2} \otimes R(-4)  \arrow[d, "\cong"] \\
      A_1 \otimes A_3\otimes R(-4) \arrow[r, "\alpha'"] & \Sym^2A_2 \otimes R(-4)
    \end{tikzcd}
  \end{equation}

  Next consider the following commutative diagram
  \begin{equation}
    \begin{tikzcd}
      & 0 \\
      & R(-6)\otimes A_3\otimes A_3 \arrow[u] \\
      & S_{\mathrm{odd}}(-3)\otimes A_3 \arrow[u] \arrow[r, "\Phi"] & \Sym^2A_2\otimes R(-4) \arrow[r] & \coker\Phi \arrow[r] & 0 \\
      0 \arrow[r] & A_1\otimes A_3\otimes R(-4) \arrow[u] \arrow[r, "\alpha'"] & \Sym^2A_2\otimes R(-4) \arrow[u, equals] \arrow[r, "\beta"] & V\otimes R(-4)\otimes A_3^{\otimes 2} \arrow[u] \arrow[r] & 0 \\
      & 0 \arrow[u]
    \end{tikzcd}
  \end{equation}

  In particular~$\beta \circ \Phi$ induces a map~$R(-6) \otimes A_3^2 \rightarrow V \otimes R(-4) \otimes A_3^2$.

  We claim that this map is trivial on~$A_3^{\otimes 2}$:
  \begin{equation}
    \begin{aligned}
      \overline{e}_7\otimes \overline{e}_7
      &\overset{\Phi}{\mapsto} \sum_{i=4}^6 \overline{e_7f_i}\otimes \overline{e_i} \\
      &=\sum_{i,j=4}^6 \trace(e_7f_if_j) \overline{e_j}\otimes \overline{e_i} \\
      &\overset{\beta}{\mapsto} \sum_{i,j=4}^6\trace (e_7f_if_j)\phi(\overline{e}_{j-3}^\vee \cdot e_{i-3}^\vee)\otimes e^2_7 \\
      &=\sum_{i,j=4}^6\trace(e_7f_if_j)\phi(f_{j}^\vee \cdot f_{i}^\vee)\otimes \overline{e}^2_7.
    \end{aligned}
  \end{equation}
  The right hand side should be considered as an element of~$R_2\otimes V\otimes A_3^{\otimes 2}$.

  Now with the notation of \cref{lemma:linear-algebra} (which is an easy result from linear algebra) applied to~$\alpha=\trace(e_7-):S^2 E^\vee\rightarrow R_2=V^\vee$, $\beta=\phi^\vee:V^\vee \rightarrow S^2E^\vee$ and~$(a_l)_l=(f_i\cdot f_j)_{i,j=4,5,6}$ we find
  \begin{equation}
    \sum_{i,j=4}^6\trace(e_7f_if_j)\otimes \phi(f_{j}^\vee \cdot f_{i}^\vee)\otimes \overline{e}^2_7=\id_V\otimes \overline{e}_7^2
  \end{equation}
  with~$\id_V$ considered as an element of~$V^\vee\otimes V$. As such, by the construction of the Koszul sequence we have that~$\beta \circ \Phi\colon R(-6) \otimes A_3^2 \rightarrow V \otimes R(-4) \otimes A_3^2$ can be decomposed as~$\kappa \otimes \id_{A_3^{\otimes 2}}$ with~$\kappa$ as in \eqref{equation:truncated-koszul}. In particular~$\coker (\beta \circ \Phi) \cong \coker(\kappa) = \Omega$. Conversely the image of~$\beta \circ \Phi$ is given by~$\ker \gamma$, such that~$\coker(\beta \circ \Phi) \cong \im(\gamma) = \coker(\Phi)$, proving the lemma.
\end{proof}

\begin{lemma}
  \label{lemma:linear-algebra}
  Let~$V_1,V_2,V_3$ be finite-dimensional vector spaces. Let~$\alpha\colon V_1\to V_2$ and~$\beta\colon V_3\to V_1$ be linear maps. Let~$v_1,\dotsc,v_n$ be a basis for~$V_1$. Then
  \begin{equation}
    \sum_{i=1}^n\alpha(v_i)\otimes\beta^\vee(v_i^\vee)
  \end{equation}
  is equal to
  \begin{equation}
    \alpha\circ\beta\colon V_3\to V_2
  \end{equation}
  considered as elements of~$V_3^\vee\otimes_kV_2$.
\end{lemma}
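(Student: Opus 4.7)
The plan is to recognise the expression on the left-hand side as the image of the canonical tensor~$\sum_{i=1}^n v_i \otimes v_i^\vee \in V_1 \otimes V_1^\vee$ under the linear map~$\alpha \otimes \beta^\vee \colon V_1 \otimes V_1^\vee \to V_2 \otimes V_3^\vee$, and then invoke the well-known fact that this tensor corresponds to~$\id_{V_1}$ under the canonical isomorphism~$V_1 \otimes V_1^\vee \cong \Hom(V_1,V_1)$.

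More precisely, I would first recall (or briefly verify) that for any finite-dimensional vector spaces~$V,W$ the natural isomorphism~$W \otimes V^\vee \xrightarrow{\sim} \Hom(V,W)$ sends~$w \otimes \varphi$ to the map~$v \mapsto \varphi(v) w$, and that under this identification the element~$\sum_i v_i \otimes v_i^\vee$ corresponds to~$\id_{V_1}$: evaluating on a basis vector~$v_j$ one gets~$\sum_i v_i^\vee(v_j) v_i = v_j$. Next I would observe that the diagram
\begin{equation}
  \begin{tikzcd}
    V_1 \otimes V_1^\vee \arrow[r, "\alpha \otimes \beta^\vee"] \arrow[d, "\sim"'] & V_2 \otimes V_3^\vee \arrow[d, "\sim"] \\
    \Hom(V_1,V_1) \arrow[r] & \Hom(V_3,V_2)
  \end{tikzcd}
\end{equation}
commutes, where the bottom arrow is~$f \mapsto \alpha \circ f \circ \beta$. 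This is immediate from chasing~$w \otimes \varphi$ around both sides: the top-right composition sends it to the map~$v \mapsto \varphi(\beta(v))\,\alpha(w) = \alpha\bigl((\varphi \circ \beta)(v) \cdot w\bigr)$, which is~$\alpha \circ (\text{map corresponding to } w \otimes \varphi) \circ \beta$.

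Chasing~$\sum_i v_i \otimes v_i^\vee$ through the diagram then gives, in the upper-right corner, the element~$\sum_i \alpha(v_i) \otimes \beta^\vee(v_i^\vee)$, whose image in~$\Hom(V_3,V_2)$ is~$\alpha \circ \id_{V_1} \circ \beta = \alpha \circ \beta$. This is exactly the claimed identity (up to the harmless swap between~$V_2 \otimes V_3^\vee$ and~$V_3^\vee \otimes V_2$).

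There is no real obstacle here: the lemma is a purely formal statement about finite-dimensional vector spaces, and the only thing to be careful about is keeping track of the canonical identifications. An equivalent and perhaps even shorter approach would be to evaluate both sides on an arbitrary~$w \in V_3$ and use the expansion~$\beta(w) = \sum_i v_i^\vee(\beta(w))\,v_i$ to conclude directly, but the conceptual proof via the identity element of~$\End(V_1)$ makes the naturality of the construction more transparent and is the version I would write up.
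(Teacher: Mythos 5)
Your proof is correct. The paper itself gives no argument for this lemma — it is simply invoked in the proof of \cref{proposition:coker-Phi} as an easy fact from linear algebra — so there is nothing to compare against; your verification via the canonical element $\sum_i v_i\otimes v_i^\vee\in V_1\otimes V_1^\vee$ corresponding to $\id_{V_1}$, together with the commuting square intertwining $\alpha\otimes\beta^\vee$ with $f\mapsto\alpha\circ f\circ\beta$, is exactly the kind of routine check the authors had in mind, and the shorter variant you mention (evaluating both sides on $w\in V_3$ and expanding $\beta(w)=\sum_i v_i^\vee(\beta(w))\,v_i$) is equally valid.
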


We will now use these algebraic results, to describe explicitly a morphism
\begin{equation}
  \label{equation:morphism-g}
  g\colon\mathbb{P}(E)\to\mathbb{P}(V)
\end{equation}
which is a~$4:1$\dash cover. This is a morphism which exists more generally for nets of curves on smooth projective surfaces, as explained in \cite[\S11.4.4]{MR3617981}, and it will be the crucial ingredient in comparing the two constructions. As it turns out there is a purely geometric result comparing morphisms~$f\colon\mathbb{P}^1\to\mathbb{P}^1$ of degree~4 to nets of conics and the choice of a smooth conic in the net, which is proven in \cref{subsection:geometric-comparison}, whilst the categorical incarnation comparing the two constructions is given in \cref{subsection:categorical-comparison}.

Consider~$Z\coloneqq\Proj S=\mathbb{P}(E)$ and~$T\coloneqq\Proj R=\mathbb{P}(V)$, where we equip~$R$ with the doubled grading. From~$\phi\colon\Sym^2E \twoheadrightarrow V$ we get a corresponding morphism~$g\colon Z\to T$.
\begin{lemma}
  \label{lemma:g-gives-pencil-of-conics}
  The morphism~$g\colon Z\to T$ sends a point to the pencil of conics for which it is a basepoint.
\end{lemma}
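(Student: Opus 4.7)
The plan is to unwind the definition of the morphism $g$ purely in terms of what the $R$-algebra structure on $S$ does on closed points, and then translate the answer into the language of pencils of conics via the usual duality between $\mathbb{P}(V)$ and hyperplanes in $V^\vee$.

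First I would fix a basis $v_1,v_2,v_3$ of $V$ and set $q_i\coloneqq\phi^\vee(v_i^\vee)\in\Sym^2E^\vee$, so that $q_1,q_2,q_3$ is a basis of the net of conics. Since $g\colon\Proj S\to\Proj R$ is induced by the graded ring map $R\to S$ whose degree-$2$ part is $\phi^\vee$ (with $R$ in doubled grading), for a closed point $p=[e]\in\mathbb{P}(E)$ the image $g(p)\in\mathbb{P}(V)$ is computed by the linear system $V\subset\HH^0(\mathbb{P}(E),\mathcal{O}_{\mathbb{P}(E)}(2))$ as
\begin{equation*}
  g(p)=[q_1(e):q_2(e):q_3(e)].
\end{equation*}
The basepoint-freeness of the net (assumed in this subsection to ensure the Clifford algebra is Artin--Schelter regular) is exactly what guarantees that this is well-defined everywhere on $Z$.

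Next I would reinterpret this image intrinsically. The point $g(p)\in\mathbb{P}(V)$ corresponds to the hyperplane
\begin{equation*}
  H_p\coloneqq\{v^\vee\in V^\vee\mid\phi^\vee(v^\vee)(e)=0\}\subset V^\vee,
\end{equation*}
which is a $2$-dimensional subspace of $V^\vee$ and hence defines a line in $\mathbb{P}(V^\vee)$, i.e.\ a pencil inside the net. A conic in the net is given by a $1$-dimensional subspace $\langle v^\vee\rangle\subset V^\vee$, and it lies in this pencil precisely when $v^\vee\in H_p$. Unwinding the definitions, $v^\vee\in H_p$ is equivalent to the quadratic form $\phi^\vee(v^\vee)\in\Sym^2E^\vee$ vanishing at $e$, i.e.\ to the corresponding conic in $\mathbb{P}(E)$ passing through $p$.

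Combining these two steps gives that the pencil $H_p$ is exactly the subpencil of conics in the net having $p$ as a basepoint, proving the claim. There is no real obstacle here: once one is careful about the identification of $\Proj S$ with $\mathbb{P}(E)$ and of hyperplanes in $V^\vee$ with pencils in $\mathbb{P}(V^\vee)$, the statement is a direct translation of the definition of a morphism to projective space via a linear system. The only thing to watch is the convention on $\Proj$ (classical versus Grothendieck), which is fixed by the paper's convention $S=\Sym E^\vee$ so that $\Proj S=\mathbb{P}(E)$ parametrises lines in $E$.
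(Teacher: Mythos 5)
Your argument is correct and is essentially the paper's own proof: both fix bases, observe that $g$ is the morphism defined by the linear system $\phi^\vee(V^\vee)\subset\Sym^2E^\vee$ (factoring through the Veronese), so $g(p)=[q_1(e):q_2(e):q_3(e)]$, and then use the duality identifying a point of $\mathbb{P}(V)$ with a line in $\mathbb{P}(V^\vee)$, i.e.\ the subpencil of conics vanishing at $p$. Your coordinate-free reformulation of the last step via the hyperplane $H_p\subset V^\vee$ is just a mild repackaging of the paper's explicit computation, and your appeal to the standing basepoint-freeness assumption for well-definedness matches the paper's setup.
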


\begin{proof}
  Note that every element in~$\Sym^2E^\vee$ (or rather~$(\Sym^2E^\vee)/k^\times$) defines a conic in~$\mathbb{P}(E)$. In particular~$\phi^\vee \colon V^\vee \hookrightarrow \Sym^2E^\vee$ identifies~$V^\vee$ (or rather~$V^\vee/k^\times$) as a net of conics. More concretely, fix bases~$x_1, x_2, x_3$ and~$v_1, v_2, v_3$ for~$E$ and~$V$ respectively and write~$\phi(x_i x_j) = \alpha_{1,i,j} v_1 + \alpha_{2,i,j} v_2 + \alpha_{3,i,j} v_3$. Then~$V^\vee$ is the net of conics spanned by
  \begin{equation}
    \begin{aligned}
      v_1^\vee &= \sum_{1=i \leq j}^3 \alpha_{1,i,j} x_i^\vee x_j^\vee, \\
      v_2^\vee &= \sum_{1=i \leq j}^3 \alpha_{2,i,j} x_i^\vee x_j^\vee, \\
      v_3^\vee &= \sum_{1=i \leq j}^3 \alpha_{3,i,j} x_i^\vee x_j^\vee.
    \end{aligned}
  \end{equation}
  An element~$[t_1:t_2:t_3] \in T = \mathbb{P}(V)$ corresponds to the following one dimensional subspace of~$V^\vee/k^\times$ (i.e.~the pencil of conics)
  \begin{equation}
    \label{equation:pencil-of-conics-dual}
    \{ a_1 v_1^\vee + a_2 v_2^\vee + a_3 v_3^\vee \mid a_1t_1 + a_2t_2 + a_3t_3 = 0 \} / k^\times
  \end{equation}
  Now~$\phi^\vee$ induces a map~$g\colon Z\to T$ as follows:
  \begin{equation}
    \label{equation:g-factor-trough-veronese}
    Z =\mathbb{P}(E) \xrightarrow{\textrm{Veronese embedding}} \mathbb{P}(\Sym^2 E) \xrightarrow{\Proj(\Sym(\phi^\vee))} \mathbb{P}(V)
  \end{equation}
  In particular~$g$ acts as follows
  \begin{equation}
    \label{equation:action-of-g}
    \begin{aligned}
      [z_1:z_2:z_3]
      &\mapsto [z_1^2:z_2^2:z^2_3:z_1z_2:z_1z_3:z_2z_3] \\
      &\mapsto [ v_1^\vee(z_1,z_2,z_3): v_2^\vee(z_1,z_2,z_3): v_3^\vee(z_1,z_2,z_3) ].
    \end{aligned}
  \end{equation}
  The lemma follows by combining \eqref{equation:pencil-of-conics-dual} and \eqref{equation:action-of-g}.
\end{proof}

\begin{remark}
  Note that~$g([z_1:z_2:z_3])$ is not well defined when~$[z_1:z_2:z_3]$ is a basepoint of the net of conics induced by~$\varphi$. We will show in \cref{proposition:net-YX-is-basepoint-free} below that this situation cannot occur.
\end{remark}

From \cref{lemma:g-gives-pencil-of-conics} we get the following corollary, which is also contained in \cite[\S11.4.4]{MR3617981}.
\begin{corollary}
  \label{corollary:Yw-is-conic}
  With the notation as above as above, let~$\xi \in V^\vee$, let~$t_\xi \in \mathbb{P}(V^\vee)$ be the associated point and~$X_\xi \in \mathbb{P}(V)$ the associated line. Conversely let~$Y_\xi \in \mathbb{P}(E)$ be the conic defined by~$\xi$. Then~$g^{-1}(X_\xi)=Y_\xi$.
\end{corollary}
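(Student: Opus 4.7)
The plan is to unwind the explicit description of $g$ given in \eqref{equation:g-factor-trough-veronese}, which realises it as the composition of the Veronese embedding $[e]\mapsto[e\otimes e]$ with the linear projection $\mathbb{P}(\Sym^2 E)\dashrightarrow\mathbb{P}(V)$ induced by $\phi$. With this description in hand, for a point $[e]\in\mathbb{P}(E)$ one has $g([e])=[\phi(e\otimes e)]$, so the condition $g([e])\in X_\xi$ translates into $\xi\bigl(\phi(e\otimes e)\bigr)=0$. By the very definition of $\phi^\vee$ as the transpose of $\phi$, this is the same as $\phi^\vee(\xi)(e\otimes e)=0$. But $\phi^\vee(\xi)\in\Sym^2E^\vee$ is, by definition, the quadratic form cutting out the conic $Y_\xi\subset\mathbb{P}(E)$, so the condition is precisely $[e]\in Y_\xi$.

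In fact the argument above identifies the defining equations of the two subschemes of $\mathbb{P}(E)$: both $g^{-1}(X_\xi)$ and $Y_\xi$ are cut out by the single quadratic form $\phi^\vee(\xi)$, and hence they coincide as closed subschemes (not merely as sets). The only point that requires comment is that $g$ must be everywhere defined; this follows from the standing assumption that the net of conics is basepoint-free, which ensures that the Veronese image of $\mathbb{P}(E)$ misses the indeterminacy locus of $\Proj(\Sym\phi^\vee)$. In the specific geometric setup of the following subsection this basepoint-freeness is independently verified in \cref{proposition:net-YX-is-basepoint-free}.

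There is no real obstacle: the corollary is essentially an adjunction identity between $\phi$ and $\phi^\vee$, packaged through the Veronese factorisation of $g$ from \cref{lemma:g-gives-pencil-of-conics}. The same calculation also recovers, a posteriori, the fact that $g$ is a $4{:}1$ cover, since a generic line $X_\xi\subset\mathbb{P}(V)$ pulls back to a smooth conic $Y_\xi$, which meets a generic fibre of $g$ (a conic itself) in $2\cdot 2=4$ points by Bézout.
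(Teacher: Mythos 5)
Your proof is correct and takes essentially the same approach as the paper's: both rely on the description of $g$ from \cref{lemma:g-gives-pencil-of-conics} (equivalently its Veronese factorisation \eqref{equation:g-factor-trough-veronese}); you simply phrase coordinate-freely, via the transpose $\phi^\vee$, what the paper writes out in coordinates. One small slip in your closing aside: a generic \emph{fibre} of $g$ is four points, not a conic --- the Bézout count comes from intersecting the preimages $Y_\xi\cap Y_{\xi'}$ of two lines through a generic point, not from intersecting a conic with a fibre.
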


\begin{proof}
  Write~$\xi = a_1 v_1^\vee + a_2 v_2^\vee + a_3 v_3^\vee$, then~$X_w$ is the line in the coordinates~$[v_1:v_2:v_3]$ defined by~$a_1 v_1  + a_2 v_2  + a_3 v_3 = 0$. In particular
  \begin{equation}
    g^{-1}(X_\xi) = \{ [x_1:x_2:x_3] \mid a_1 v_1^\vee([x_1:x_2:x_3]) + a_2 v_2^\vee([x_1:x_2:x_3]) + a_3 v_3^\vee([x_1:x_2:x_3]) = 0 .
  \end{equation}
  This is equal to~$Y_\xi$.
\end{proof}
Now if~$g\colon Z\to T$ is an arbitrary finite morphism of smooth projective varieties, then by dualising the multiplication
\begin{equation}
  g_*\mathcal{O}_Z\otimes_{\mathcal{O}_T}g_*\mathcal{O}_Z\to g_*\mathcal{O}_Z
\end{equation}
we obtain a copairing
\begin{equation}
  g_*\omega_{Z/T}\to g_*\omega_{Z/T}\otimes_{\mathcal{O}_T}g_*\omega_{Z/T}.
\end{equation}
By tensoring it over~$g_*\mathcal{O}_Z$ on the left and right with~$g_*\omega_{Z/T}^{-1}$ we get an equivalent copairing
\begin{equation}
  \label{equation:copairing-sheaves}
  \delta_{Z/T}\colon g_*\omega_{Z/T}^{-1}\to g_*\mathcal{O}_Z\otimes_{\mathcal{O}_Z}g_*\mathcal{O}_Z.
\end{equation}
This will be the geometric incarnation of \eqref{equation:delta} in the special case we are interested in. This, and other identifications between sheaves and the corresponding graded modules are given in the following lemma.
\begin{lemma}
  \label{lemma:gOz-Seven-gOmegainv-Sodd}
  There exist isomorphisms
  \begin{equation}
    \label{equation:gOz-Seven}
    g_*\mathcal{O}_Z\cong\widetilde{S_{\mathrm{even}}},
  \end{equation}
  and
  \begin{equation}
    \label{equation:gOmegainv-Sodd}
    g_*\omega_{Z/T}^{-1}\cong\widetilde{S_{\mathrm{odd}}(-3)}\otimes_k A_3.
  \end{equation}
  Using these identifications the morphism \eqref{equation:copairing-sheaves} is the sheafification of the composition
  \begin{equation}
    \label{equation:sheafification}
    S_{\mathrm{odd}}(-3)\otimes A_3\hookrightarrow S(-3)\otimes A_3\overset{\delta}{\rightarrow}S\otimes_RS\twoheadrightarrow S_{\mathrm{even}}\otimes_RS_{\mathrm{even}}.
  \end{equation}
\end{lemma}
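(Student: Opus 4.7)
The plan is to unfold the correspondence between graded $R$-modules (with its doubled grading) and quasi-coherent sheaves on $T=\Proj R=\mathbb{P}(V)$, together with the Frobenius duality captured by the trace pairing \eqref{equation:trace-pairing} and Grothendieck duality for the finite flat morphism~$g$.

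For the first isomorphism I would use that~$g$ arises from the degree-preserving graded ring map $R\to S$ coming from $\phi^\vee\colon V^\vee\hookrightarrow\Sym^2E^\vee$, so that $g_*\mathcal{O}_Z$ is the quasi-coherent sheaf on~$T$ associated to~$S$ viewed as a graded $R$-module. Since~$R$ is concentrated in even degrees (by the doubled grading), only the even-graded part of~$S$ contributes a nonzero sheaf, yielding $g_*\mathcal{O}_Z\cong\widetilde{S_{\mathrm{even}}}$. This can be checked degree by degree via the projection formula and the standard global-sections computation $\HH^0(Z,\mathcal{O}_Z(2n))=S_{2n}$.

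For the second isomorphism I would first identify $\omega_{Z/T}$ as a line bundle. Since $\omega_Z\cong\mathcal{O}_Z(-3)$, $g^*\omega_T\cong\mathcal{O}_Z(-6)$, and~$g$ is finite and flat (as $g_*\mathcal{O}_Z$ is locally free of rank~$4$ by \cref{lemma:graded-Frobenius}), we have $\omega_{Z/T}\cong\mathcal{O}_Z(3)$ abstractly, and the canonical Grothendieck-duality isomorphism $g_*\omega_{Z/T}\cong\sheafHom_{\mathcal{O}_T}(g_*\mathcal{O}_Z,\mathcal{O}_T)$ combined with the trace pairing \eqref{equation:trace-pairing} produces a canonical factor of $A_3^{-1}$, giving $\omega_{Z/T}\cong\mathcal{O}_Z(3)\otimes_k A_3^{-1}$ canonically. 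Pushing forward the inverse then yields $g_*\omega_{Z/T}^{-1}\cong g_*\mathcal{O}_Z(-3)\otimes_k A_3\cong\widetilde{S_{\mathrm{odd}}(-3)}\otimes_k A_3$ by the same graded-module computation as in the first step, where now the odd-degree part of~$S$ contributes after the shift by $-3$ lands it in the even $R$-degrees.

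For the final assertion, the copairing $\delta_{Z/T}$ of \eqref{equation:copairing-sheaves} is by construction the Grothendieck-duality dual of the multiplication $g_*\mathcal{O}_Z\otimes_{\mathcal{O}_T}g_*\mathcal{O}_Z\to g_*\mathcal{O}_Z$. Under the first two identifications this translates into the $R$-linear dual of the multiplication $S_{\mathrm{even}}\otimes_R S_{\mathrm{even}}\to S_{\mathrm{even}}$ taken with respect to the trace pairing, and that dual is precisely the composition displayed in \eqref{equation:sheafification}: the inclusion $S_{\mathrm{odd}}(-3)\otimes A_3\hookrightarrow S(-3)\otimes A_3$ followed by the algebraic $\delta$ of \eqref{equation:delta} and the projection onto $S_{\mathrm{even}}\otimes_R S_{\mathrm{even}}$. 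The main obstacle I expect will be verifying that the Grothendieck-duality trace on~$g$ coincides, up to the canonical factor of $A_3$, with the algebraic Frobenius trace on the graded $R$-algebra~$S$, so that the sheafwise dualization matches the module-theoretic one; this is essentially a naturality check for the $\Proj$ construction applied to the graded Frobenius extension $S/R$.
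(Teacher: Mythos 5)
Your proposal is correct and follows essentially the same route as the paper: the first isomorphism from $R$ being concentrated in even degrees, the second from the trace-pairing identification \eqref{equation:S-dual} (your canonical-bundle computation $\omega_{Z/T}\cong\mathcal{O}_Z(3)$ plus the canonical $A_3^{-1}$-factor is just a repackaging of restricting \eqref{equation:S-dual} to the even part), and the last statement from the explicit form of duality for a finite flat morphism. The naturality check you flag as the main obstacle is exactly what the paper's terse final sentence appeals to, so nothing essential is missing.
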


\begin{proof}
  The identification in \eqref{equation:gOz-Seven} follows from the fact that~$R$ is concentrated in even degrees.

  The identification in \eqref{equation:gOmegainv-Sodd} is obtained by restricting \eqref{equation:S-dual} to the even part, which gives
  \begin{equation}
    g_*\omega_{Z/T}=\widetilde{S_{\mathrm{odd}}(3)}\otimes A_3^{\otimes-1},
  \end{equation}
  and using
  \begin{equation}
    (S_{\mathrm{odd}}(-3)\otimes_k A_3)\otimes_{S_{\mathrm{even}}}(S_{\mathrm{odd}}(3)\otimes_kA_3^{-1})\cong S_{\mathrm{even}}
  \end{equation}
  we get the desired identification.

  Finally the identification in \eqref{equation:sheafification} follows from the explicit form of duality for a finite flat morphism.
\end{proof}

We will now define sheaves on~$T$, which will induce the sheaves~$\mathcal{F}$ and~$\mathcal{Q}$ on~$X$, as in \cref{subsection:symmetric-sheaves}. These are
\begin{equation}
  \begin{aligned}
    \overline{\mathcal{F}}&\coloneqq g_*\mathcal{O}_Z/\mathcal{O}_T, \\
    \overline{\mathcal{Q}}&\coloneqq\coker(\delta'\colon\omega_{Z/T}^{-1}\to\Sym^2\overline{\mathcal{F}})
  \end{aligned}
\end{equation}
where~$\delta'$ is the composition of~$\delta_{Z/T}$ with the projection to~$\Sym^2\overline{\mathcal{F}}$. Then we have the following result.
\begin{proposition}
  \label{proposition:Fbar-Qbar}
  There exist isomorphisms
  \begin{equation}
    \begin{aligned}
      \overline{\mathcal{F}}&\cong A_2\otimes_k\mathcal{O}_T(-1), \\
      \overline{\mathcal{Q}}&\cong\Omega_{T/k}^1\otimes_k A_3^{\otimes 2}.
    \end{aligned}
  \end{equation}
  Moreover we have that the quotient map
  \begin{equation}
    \Sym^2\overline{\mathcal{F}}\to\overline{\mathcal{Q}}
  \end{equation}
  is the composition of the sheafification of the morphism~$\beta$ from \eqref{equation:beta}
  \begin{equation}
    \Sym^2A_2\otimes_k\mathcal{O}_T(-2)\to V\otimes_k\mathcal{O}_T(-2)\otimes A_3^{\otimes 2}
  \end{equation}
  with the quotient map
  \begin{equation}
    V\otimes_k\mathcal{O}_T(-2)\otimes A_3^{\otimes2}\to\Omega_{T/k}^1\otimes A_3^{\otimes2}.
  \end{equation}
\end{proposition}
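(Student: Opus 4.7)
The plan is to deduce both isomorphisms by combining \cref{lemma:gOz-Seven-gOmegainv-Sodd} with \cref{proposition:coker-Phi}, identifying $\overline{\mathcal{F}}$ and $\overline{\mathcal{Q}}$ with sheafifications of the graded $R$-modules appearing there.

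First I would read off $\overline{\mathcal{F}}$. By \cref{lemma:gOz-Seven-gOmegainv-Sodd} we have $g_*\mathcal{O}_Z\cong\widetilde{S_{\mathrm{even}}}$ as $\mathcal{O}_T$-modules, and by the canonical decomposition noted immediately before \eqref{equation:truncated-koszul} there is a split short exact sequence of graded $R$-modules
\begin{equation}
  0\rightarrow R\rightarrow S_{\mathrm{even}}\rightarrow (R\otimes_k A_2)(-2)\rightarrow 0.
\end{equation}
Since $\mathcal{O}_T=\widetilde{R}$ and since the doubled grading on $R$ identifies $\widetilde{R(-2)}$ with $\mathcal{O}_T(-1)$, sheafifying the quotient gives $\overline{\mathcal{F}}\cong A_2\otimes_k\mathcal{O}_T(-1)$.

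Next I would identify $\overline{\mathcal{Q}}$ by unraveling the definition of $\delta'$. By the final assertion of \cref{lemma:gOz-Seven-gOmegainv-Sodd}, the copairing $\delta_{Z/T}$ is the sheafification of the composition \eqref{equation:sheafification}. Postcomposing with the projections from $g_*\mathcal{O}_Z$ to $\overline{\mathcal{F}}$ in both tensor factors and symmetrising is, under the identification of $\overline{\mathcal{F}}$ just obtained, precisely the sheafification of the morphism $\Phi$ defined before \cref{proposition:coker-Phi}. Passing to cokernels and invoking that proposition gives $\overline{\mathcal{Q}}\cong\widetilde{\Omega}\otimes_k A_3^{\otimes 2}$. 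The remark following \eqref{equation:truncated-koszul} identifies $\widetilde{\Omega}$ with $\mathrm{T}_T(-3)\cong\Omega^1_{T/k}$ (recognising the twisted Koszul sequence as the dual Euler sequence), which yields the second isomorphism.

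Finally, the claimed description of the quotient map is immediate from the second half of \cref{proposition:coker-Phi}: the induced map $\Sym^2 A_2\otimes_k R(-4)\to\coker\Phi$ factors there as $\beta\otimes\id_{R(-4)}$ followed by the Koszul quotient, and sheafifying this factorisation under the identifications above gives precisely the stated composition $\Sym^2\overline{\mathcal{F}}\to V\otimes_k\mathcal{O}_T(-2)\otimes A_3^{\otimes 2}\to\Omega^1_{T/k}\otimes_k A_3^{\otimes 2}$. The only delicate aspect of the argument is bookkeeping the doubled grading on $R$ and the $A_3^{\otimes 2}$-twist consistently across all sheafifications, but no new algebraic input is required beyond \cref{proposition:coker-Phi}.
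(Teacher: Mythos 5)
Your argument is correct and is exactly the paper's proof: the paper disposes of this proposition in one line by citing \cref{proposition:coker-Phi} and \cref{lemma:gOz-Seven-gOmegainv-Sodd}, and your write-up simply makes explicit the same sheafification bookkeeping (the splitting $S_{\mathrm{even}}/R\cong(R\otimes_k A_2)(-2)$, the identification of $\delta'$ with the sheafification of $\Phi$, and $\widetilde{\Omega}\cong\Omega^1_{T/k}$ via the doubled grading) that the paper leaves implicit.
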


\begin{proof}
  This follows immediately from the construction together with \cref{proposition:coker-Phi} and \cref{lemma:gOz-Seven-gOmegainv-Sodd}.
\end{proof}

\subsection{The geometric comparison}
\label{subsection:geometric-comparison}
We now consider the situation of \cref{section:nc-P1-as-clifford}, i.e.~we start with a finite morphism~$f\colon Y\to X$ of degree~4, where~$X,Y\cong\mathbb{P}^1$. As in \cref{subsection:symmetric-sheaves} we define
\begin{equation}
  \begin{aligned}
    \mathcal{F}_{Y/X}&\coloneqq f_*\mathcal{O}_Y/\mathcal{O}_X \\
    \mathcal{Q}_{Y/X}&\coloneqq\coker(\omega_{Y/X}^{-1}\to\Sym^2\mathcal{F}_{Y/X}).
  \end{aligned}
\end{equation}
Then we define
\begin{equation}
  \begin{aligned}
    E_{Y/X}&\coloneqq\HH^0(X,\mathcal{F}_{Y/X}(1)) \\
    V_{Y/X}&\coloneqq\HH^0(X,\mathcal{Q}_{Y/X}(2)).
  \end{aligned}
\end{equation}
Using \cref{lemma:description-F,proposition:description-Q} we see that~$\dim_kE_{Y/X}=\dim_kV_{Y/X}=3$, and the quotient morphism~$\Sym^2\mathcal{F}_{Y/X}\to\mathcal{Q}_{Y/X}$ defines a net of conics
\begin{equation}
  \phi_{Y/X}\colon\Sym^2E_{Y/X}\twoheadrightarrow V_{Y/X}
\end{equation}
because the twist in the definition of~$E_{Y/X}$ and~$V_{Y/X}$ makes the higher sheaf cohomology vanish in the definition of~$\mathcal{Q}_{Y/X}$.

We now prove the following lemma, which will allow us to prove that the net of conics~$\phi_{Y/X}$ is basepoint-free. Recall from \cref{subsection:symmetric-sheaves} that~$\mathcal{F}_{Y/X}(1) = \mathcal{O}_{\mathbb{P}^1}^{\oplus 3}$ and~$f_* \omega^{-1}_{Y/X}(2) = \mathcal{O}_{\mathbb{P}^1}^{\oplus 3} \oplus \mathcal{O}_{\mathbb{P}^1}(-1)$. In particular there is a morphism
\begin{equation}
  \theta\colon f_* \omega^{-1}_{Y/X}(2) \rightarrow \mathcal{O}_{\mathbb{P}^1}(-1)
\end{equation}
which is unique up to scalar multiple.

If~$x$ is not a branch point of~$f$, then~$f^{-1}(x) = \{ y_1, y_2, y_3, y_4 \}$ and
\begin{equation}
  \label{equation:decomposition-yi}
  (f_* \omega^{-1}_{Y/X}(2))_x \otimes k(x) \cong (f_* \mathcal{O}_Y)_x \otimes k(x) \cong k(y_1) \oplus k(y_2) \oplus k(y_3) \oplus k(y_4)
\end{equation}

\begin{lemma} \label{lemma:comparison:theta-x}
  For a generic~$x\in X$ the map~$\theta_x\otimes k(x)$ does not vanish on any of the~$k(y_i)$ in \eqref{equation:decomposition-yi}.
\end{lemma}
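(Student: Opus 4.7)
The plan is to identify $\theta$ (up to scalar) with the trace map for the finite flat morphism $f$, and then exploit the fact that the trace is nondegenerate over the unramified locus. Concretely, the projection formula gives an isomorphism
\begin{equation}
  f_*\mathcal{O}_Y(2) = f_*\bigl(\omega_{Y/X}\otimes f^*\mathcal{O}_X(-1)\bigr) \cong f_*\omega_{Y/X}\otimes_{\mathcal{O}_X}\mathcal{O}_X(-1),
\end{equation}
using that $\omega_{Y/X}\otimes f^*\mathcal{O}_X(-1)\cong\mathcal{O}_Y(6-4)=\mathcal{O}_Y(2)$ by \cref{lemma:description-I}. Composing this isomorphism with the trace map $\mathrm{Tr}_f\colon f_*\omega_{Y/X}\to\mathcal{O}_X$ tensored with $\mathcal{O}_X(-1)$ yields a nonzero morphism $f_*\mathcal{O}_Y(2)\to\mathcal{O}_X(-1)$.

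Since Grothendieck duality (for the finite flat morphism $f$) gives
\begin{equation}
  \Hom_X\bigl(f_*\mathcal{O}_Y(2),\mathcal{O}_X(-1)\bigr) \cong \Hom_Y\bigl(\mathcal{O}_Y(2),f^!\mathcal{O}_X(-1)\bigr) \cong \Hom_Y\bigl(\mathcal{O}_Y(2),\mathcal{O}_Y(2)\bigr) \cong k,
\end{equation}
the space of candidates for $\theta$ is one-dimensional, so $\theta$ must be proportional to (and may be identified with) the map $\mathrm{Tr}_f\otimes\id_{\mathcal{O}_X(-1)}$. Whether $\theta_x\otimes k(x)$ vanishes on a summand $k(y_i)$ is of course invariant under rescaling $\theta$.

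It remains to analyse the trace map at a generic point. The branch locus of $f\colon\mathbb{P}^1\to\mathbb{P}^1$ is a finite subset of $X$, so a generic $x\in X$ lies in its complement; for such $x$ the morphism $f$ is étale over a neighbourhood, the fibre $f^{-1}(x)$ consists of four distinct reduced $k$\dash points $y_1,\dotsc,y_4$, and the decomposition \eqref{equation:decomposition-yi} is canonical. Under this decomposition the stalk of the trace map is the sum $\sum_{i=1}^4\mathrm{Tr}_{k(y_i)/k(x)}$ of the residue-field traces; since each $y_i$ has residue degree $1$ over $x$, each summand is an isomorphism $k(y_i)\xrightarrow{\sim}k(x)$. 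In particular the restriction of $\theta_x\otimes k(x)$ to any single factor $k(y_i)$ is an isomorphism, and so is nonzero, giving the lemma.

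The only subtlety I expect is ensuring the normalisation of the trace really matches the map $\theta$ (as opposed to accidentally being zero for some degenerate reason), but this is precisely the content of the one-dimensional $\Hom$ computation above: any nonzero candidate is a scalar multiple of $\theta$, and the trace map is nonzero.
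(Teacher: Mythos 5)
Your proof is correct, but it takes a genuinely different route from the paper's. The paper argues locally: it restricts to an affine open $\Spec C\subset X$ with $f^{-1}(\Spec C)=\Spec D$ a relative Frobenius extension, uses the isomorphism $\Hom_C(D,C)\cong D$ to convert $\theta$ into a regular function $\tilde{\theta}$ on $\Spec D$, observes that $(\theta_x\otimes k(x))$ being nonzero on the summand $k(y_i)$ is exactly $\tilde{\theta}(y_i)\neq 0$, and concludes since $\tilde{\theta}$ has only finitely many zeroes. You instead pin down $\theta$ globally, up to a nonzero scalar, as the twisted duality trace $\mathrm{Tr}_f\otimes\id_{\mathcal{O}_X(-1)}$ via the one-dimensionality of $\Hom_X(f_*\mathcal{O}_Y(2),\mathcal{O}_X(-1))$ (your Grothendieck duality computation agrees with the paper's splitting $\mathcal{O}^{\oplus 3}\oplus\mathcal{O}(-1)$, which is how the paper gets ``unique up to scalar''), and then compute fibres over the unramified locus. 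Two small remarks: first, your phrase ``the stalk of the trace map is $\sum_i\mathrm{Tr}_{k(y_i)/k(x)}$'' tacitly uses the canonical trivialisation of $\omega_{Y/X}$ over the \'etale locus (triviality of the different); your actual map involves the chosen isomorphism $\mathcal{O}_Y(2)\cong\omega_{Y/X}\otimes f^*\mathcal{O}_X(-1)$, which differs fibrewise from that trivialisation by a unit of $\prod_i k(y_i)$ — this rescales each factor but does not affect non-vanishing, so the gap is cosmetic. Second, your approach actually buys more than the paper's: since the composite is locally pairing against a generator of $\Hom_C(D,C)$ as a $D$\dash module, the non-vanishing holds at \emph{every} non-branch point, not just a generic one, whereas the paper's argument is more elementary and self-contained within its relative Frobenius formalism and is content with genericity.
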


\begin{proof}
  As in \cite[\S3.1]{1503.03992v4} we can restrict to an affine open~$\Spec C \subset X$ such that~$f^{-1}(\Spec C)=\Spec D$, $D/C$ is relative Frobenius of rank 4 and~$f_* \omega^{-1}_{Y/X}(2)|_{\Spec C} \cong f_* (\mathcal{O}_D)$. In particular~$\theta$ is given by a morphism~$D \rightarrow C$. Moreover as~$f$ has only finitely many branch points, we can choose~$C$ such that~$f^{-1}(x)$ consists of 4 points~$y_1,y_2,y_3,y_4$ for all~$x \in \Spec C$. We will now prove that the lemma holds for all but a finite number of points of~$\Spec C$.

  Note that if~$x$ is any point in~$\Spec C$, we can use \cite[lemma~3.1]{MR3565443} to conclude that
  \begin{equation}
    (f_* \mathcal{O}_Y)_x \otimes k(x) \cong D \otimes_C k(x) \cong k(y_1) \oplus k(y_2) \oplus k(y_3) \oplus k(y_4)
  \end{equation}
  is relative Frobenius of rank 4 over~$k(x)$. This implies that~$k(y_i) \cong k(x)$ for~$i=1,2,3,4$. Let~$e_{x,1}, e_{x,2}, e_{x,3}, e_{x,4}$ denote the idempotents in
  \begin{equation}
    k(y_1) \oplus k(y_2) \oplus k(y_3) \oplus k(y_4) \cong k(x)^{\oplus 4}
  \end{equation}
  We now need to prove (for generic~$x$) that~$(\theta_x \otimes k(x))(e_{x,i}) \neq 0$. For this consider the following commutative diagram
  \begin{equation}
    \label{equation:evaluation-diagram}
    \begin{tikzcd}
      \Hom_C(D,C) \arrow[r] \arrow[d, "\cong"] & \Hom_{k(x)}(D\otimes_Ck(x),k(x)) \arrow[d, "\cong"] \arrow[rr, "\mathrm{eval}(e_{x,i})"] & & k(x) \arrow[d, "\cong"] \\
      D \arrow[r, "-\otimes_Ck(x)"] & D\otimes_Ck(x) \arrow[r, "\cong"] & k(x)^{\oplus4} \arrow[r, "\pi_i"] & k(x)\cong D_{y_i}\otimes k(y_i).
    \end{tikzcd}
  \end{equation}
  Commutativity of the left square follows from the fact that the construction of relative Frobenius pairs is compatible with base change. Commutativity of the right square follows from the fact that the middle vertical isomorphism (or rather its inverse) is given by
  \begin{equation}
    k(x)^{\oplus 4} \xrightarrow{\cong} \Hom_{k(x)}(k(x)^{\oplus 4},k(x)): (a_1,a_2,a_3,a_4) \mapsto \left( (b_1,b_2,b_3,b_4) \mapsto \sum_{i=1}^4 a_ib_i \right)
  \end{equation}
  Now recall that every element in~$d \in D$ defines a function on~$\Spec D$ by sending~$y_i \in \Spec D$ to the image of~$d$ under the bottom horizontal composition in \eqref{equation:evaluation-diagram}. In particular~$\theta \in \Hom_C(D,C)$ defines an element in~$D$ through the left vertical isomorphism in \eqref{equation:evaluation-diagram} and hence a function~$\tilde{\theta}$ on~$\Spec D$. By construction we have
  \begin{equation}
    \tilde{\theta}(y_i) \neq 0 \ \Leftrightarrow \ (\theta_x \otimes k(x))(e_{x,i}) \neq 0
  \end{equation}
  The lemma now follows by noticing that~$\tilde{\theta}$ can only have finitely many zeroes on~$\Spec D$.
\end{proof}

\begin{proposition}
  \label{proposition:net-YX-is-basepoint-free}
  The net of conics~$\phi_{Y/X}$ is basepoint-free.
\end{proposition}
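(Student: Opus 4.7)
The plan is to argue by contradiction. Suppose some $0\neq e\in E_{Y/X}$ is a basepoint, so $\phi_{Y/X}(e^2)=0$. By \cref{lemma:description-I,lemma:pushforward-line-bundles} we have $f_*\omega_{X/Y}^{-1}(2)\cong\mathcal{O}_{\mathbb{P}^1}^{\oplus 3}\oplus\mathcal{O}_{\mathbb{P}^1}(-1)$, in particular $\HH^1(\mathbb{P}^1,f_*\omega_{X/Y}^{-1}(2))=0$, so the short exact sequence
\begin{equation*}
0\to f_*\omega_{X/Y}^{-1}(2)\to\Sym^2\mathcal{F}_{Y/X}(2)\to\mathcal{Q}_{Y/X}(2)\to 0
\end{equation*}
stays exact on global sections. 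Hence $e^2\in\HH^0(\Sym^2\mathcal{F}_{Y/X}(2))$ lifts to a global section of $f_*\omega_{X/Y}^{-1}(2)$, and since $\HH^0(\mathcal{O}_{\mathbb{P}^1}(-1))=0$ this lift necessarily lies in the subsheaf $\ker\theta\cong\mathcal{O}_{\mathbb{P}^1}^{\oplus 3}$. Consequently, for every $x\in X$ the fibre image of $e^2$ in $\Sym^2\mathcal{F}_{Y/X}(2)\otimes k(x)$ lies in the subspace $\ker(\theta_x\otimes k(x))\subset (f_*\omega_{X/Y}^{-1}(2))_x\otimes k(x)$.

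Now restrict to a Zariski open $\Spec C\subset X$ away from the branch locus as in the proof of \cref{lemma:comparison:theta-x}, so that $D/C$ is a relative Frobenius extension of rank~$4$ and $f_*\omega_{X/Y}^{-1}(2)|_{\Spec C}\cong D$. At a generic $x\in\Spec C$ pick the idempotent basis $e_{x,1},\ldots,e_{x,4}$ of $D\otimes k(x)=k(y_1)\oplus\cdots\oplus k(y_4)$, and write $\bar e_{x,i}\in\mathcal{F}_{Y/X}\otimes k(x)$ for their images, so that $\sum_i\bar e_{x,i}=0$. Using the explicit dual-basis formula $r(1)=\sum_i e_i\otimes f_i$ of \cref{rem:dual-bases} together with the Frobenius identification underlying \cref{lemma:HYX-symmetric}, one computes that the fibre inclusion $(f_*\omega_{X/Y}^{-1}(2))_x\otimes k(x)\hookrightarrow\Sym^2\mathcal{F}_{Y/X}(2)\otimes k(x)$ sends each $e_{x,j}$ to a nonzero scalar multiple $\mu_j\bar e_{x,j}^2$. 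Writing $e_x=\sum_{i=1}^3 b_i\bar e_{x,i}$ in the basis $\bar e_{x,1},\bar e_{x,2},\bar e_{x,3}$ and expressing the fibre image of $e^2$ as $\sum_{j=1}^4\alpha_j\mu_j\bar e_{x,j}^2$, the identity $(e^2)_x=e_x^2$ in $\Sym^2 k^3$ gives, by comparing off-diagonal coefficients, the relations $b_ib_j=\alpha_4\mu_4$ for all $1\le i<j\le 3$.

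A short case analysis of these relations shows that $e_x$ must be proportional to some $\bar e_{x,j(x)}$ with $j(x)\in\{1,2,3,4\}$; in that case $\alpha_i=0$ for $i\neq j(x)$ and $\alpha_{j(x)}$ is a nonzero multiple of $c^2$, where $e_x=c\,\bar e_{x,j(x)}$. The membership in $\ker(\theta_x\otimes k(x))$ translates into $\sum_i c_i\alpha_i=0$ for the scalars $c_i=(\theta_x\otimes k(x))(e_{x,i})$, and \cref{lemma:comparison:theta-x} ensures each $c_i\ne 0$. Therefore $c_{j(x)}\alpha_{j(x)}=0$ forces $c=0$, so $e_x=0$ at every generic $x$. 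Since $\mathcal{F}_{Y/X}(1)\cong\mathcal{O}_{\mathbb{P}^1}^{\oplus 3}$ is trivial, $e$ is a constant section and $e_x=0$ at one point implies $e=0$, contradicting $e\neq 0$. The main obstacle is the fibre-wise identification of the image of $(f_*\omega_{X/Y}^{-1}(2))_x\otimes k(x)$ in $\Sym^2\mathcal{F}_{Y/X}(2)\otimes k(x)\cong\Sym^2 k^3$ as the span of the four squares $\bar e_{x,j}^2$; this is precisely what dictates the algebraic constraints on $e_x$ and lets \cref{lemma:comparison:theta-x} close the argument.
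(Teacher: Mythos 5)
Your proof is correct and follows essentially the same route as the paper's. Both arguments reduce basepoint-freeness to the statement that no pure square $e^2$ lies in $\im(i_{Y/X}) = \ker(\phi_{Y/X})$, use the vanishing $\HH^0(\mathcal{O}_{\mathbb{P}^1}(-1)) = 0$ to pass to the subsheaf $\ker\theta$, reduce to a fibre-wise check at a generic point via triviality of the sheaves involved, identify the image of the fibre map with the span of the squares $\bar e_{x,j}^2$ using the idempotent/self-dual basis coming from the relative Frobenius structure, and close the argument with \cref{lemma:comparison:theta-x}. The only differences are stylistic: you argue by contradiction with a fixed nonzero $e$ and spell out the case analysis (off-diagonal identities $b_ib_j = \alpha_4\mu_4$, conclusion that $e_x \propto \bar e_{x,j(x)}$), whereas the paper phrases the same fibre computation directly as an intersection statement. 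Both are complete.
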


\begin{proof}
  Recall that~$\phi_{Y/X}$ was constructed via the following short exact sequence
  \begin{equation}
    0 \rightarrow \HH^0(X,f_*\omega^{-1}_{Y/X}(2)) \xrightarrow{i_{Y/X}} \Sym^2 (\HH^0(X,\mathcal{F}_{Y/X}(1))) \xrightarrow{\phi_{Y/X}} \HH^0(X,\mathcal{Q}_{Y/X}(2)) \rightarrow 0
  \end{equation}
  In particular it suffices to prove that
  \begin{equation}
    \label{equation:look-at-e-cdot-e}
    \im(i_{Y/X}) \cap \{ e \cdot e \mid e \in \HH^0(X,\mathcal{F}_{Y/X}(1)) \} = \{0 \}.
  \end{equation}
  To see this, note that~$\im(i_{Y/X}) = \ker(\phi_{Y/X})$ and
  \begin{equation}
    \begin{aligned}
      \ker(\phi_{Y/X})
      & = \left\{ \sum_{1=i \leq j}^3 \beta_{i,j} x_ix_j \mid \phi \left(\sum_{1=i \leq j}^3 \beta_{i,j} x_ix_j \right) = 0 \right\} \\
      & = \left\{ \sum_{1=i \leq j}^3 \beta_{i,j} x_ix_j \mid \sum_{1=i \leq j}^3 \alpha_{n,i,j} \beta_{i,j} = 0 \textrm{ for $n=1,2,3$} \right\} \\
      & = \left\{ \sum_{1=i \leq j}^3 \beta_{i,j} x_ix_j \mid v_n^\vee((\beta_{i,j})_{i,j}) = 0 \textrm{ for $n=1,2,3$} \right\}.
    \end{aligned}
  \end{equation}
  where the~$x_i$ and~$\alpha_{i,j}$ are as in \cref{lemma:g-gives-pencil-of-conics}. Moreover as in \eqref{equation:g-factor-trough-veronese}, we interpret~$v_1^\vee, v_2^\vee, v_3^\vee$ as functions on~$\mathbb{P}(\Sym^2 E) \cong \mathbb{P}^5$. The condition that the net spanned by~$v_1^\vee, v_2^\vee, v_3^\vee$ is basepoint-free is equivalent to requiring that the points~$[\beta_{1,1}: \beta_{2,2}: \beta_{3,3}: \beta_{1,2}: \beta_{1,3}: \beta_{2,3}]$ for which~$\sum_{1=i \leq j}^3 \beta_{i,j} x_ix_j \in \ker(\phi_{Y/X}))$ do not lie in the image of the Veronese embedding~$\mathbb{P}(E) \cong \mathbb{P}^2 \hookrightarrow \mathbb{P}(\Sym^2 E)$. And finally the latter is equivalent to~$\sum_{1=i \leq j}^3 \beta_{i,j} x_ix_j$ not lying in the image of
  \begin{equation}
    E \rightarrow \Sym^2 E: \sum_{i=1}^3 \gamma_i x_i \mapsto \left(\gamma_i x_i\right)^2 = \sum_{1=i \leq j}^3 \gamma_i \gamma_j x_ix_j .
  \end{equation}
  In particular in order to prove the result, it suffices to prove \eqref{equation:look-at-e-cdot-e}.

  Let~$j_{Y/X}\colon \ker(\theta) \rightarrow f_*\omega^{-1}_{Y/X}(2) \rightarrow \Sym^2 (\mathcal{F}_{Y/X}(1))$ be the induced composition. Then, using the fact that~$\mathcal{O}_{\mathbb{P}^1}(-1)$ has no global sections, we see that~$\HH^0(X,j_{Y/X}) = i_{Y/X}$. Moreover as both~$\ker(\theta)$ and~$\Sym^2 (\mathcal{F}_{Y/X}(1))$ are given by direct sums of copies of the structure sheaf we have for each point~$x \in X$ that
  \begin{equation}
    j_{Y/X,x} \otimes_{\mathcal{O}_{X,x}} k(x) = \HH^0(X,j_{Y/X}) \otimes_k k(x)= i_{Y/X} \otimes_k k(x)
  \end{equation}
  and
  \begin{equation}
    \{ (e \cdot e) \otimes_k 1_{k(x)} \mid e \in \HH^0(X,\mathcal{F}_{Y/X}(1)) \} = \{ e_x \cdot e_x \mid e_x \in \mathcal{F}_x\otimes k(x) \}.
  \end{equation}
  As such it suffices to prove that
  \begin{equation}
    \im \big(j_{Y/X,x} \otimes_{\mathcal{O}_{X,x}} k(x)\big) \cap \{ (e_x \cdot e_x) \otimes_{\mathcal{O}_{X,p}} 1_{k(x)} \mid e_x \in \mathcal{F}_x \} = \{0 \}
  \end{equation}
  holds for some point~$x \in X$. For this let~$x$ be a generic point as in \cref{lemma:comparison:theta-x}

  Then as in the proof of \cref{lemma:HYX-symmetric} (using the fact that the~$e_i$ form a self-dual basis (\cref{rem:dual-bases})) we see that~$j_{Y/X,x} \otimes_{\mathcal{O}_{X,x}} k(x)$ is given by composing the inclusion~$\ker(\theta_x) \otimes_{\mathcal{O}_{X,x}} k(x) \subset k(x)^{\oplus 4}$ with
  \[ k(x)^{\oplus 4} \rightarrow \Sym^2(k(x)^{\oplus 4}) \rightarrow \Sym^2((k(x)^{\oplus 4})/k(x)): e_i \mapsto e_i \cdot e_i \]
  Note that~$\sum_{i,j=1, i \leq j}^4 a_{i,j} \frac{e_i \cdot e_j + e_j \cdot e_i}{2}$ is of the form~$e \cdot e$ if and only if~$\alpha_{i,i} \alpha_{j,j} = 4 \alpha_{i,j}^2$ holds for all~$i < j$. Such an element lies in the image of~$j_{Y/X,x} \otimes_{\mathcal{O}_{X,x}} k(x)$ if and only if there is 1~$i$ such that~$\alpha_{j,k} = 0$ whenever~$(j,k) \neq (i,i)$. Hence it suffices to show that~$\ker(\theta_x) \otimes_{\mathcal{O}_{X,x}} k(x)$ does not contain an element of the form~$\lambda e_i$. This is however guaranteed by the fact that~$\theta_x \otimes_{\mathcal{O}_{X,x}} k(x)$ does not vanish on any of the~$k(y_i)$ in \eqref{equation:decomposition-yi}.
\end{proof}

One final piece of information obtained from~$f\colon Y\to X$ in this setting is a character~$\xi_{Y/X}\colon V_{Y/X}\to k$, up to scalars, by taking the global sections of the unique non-zero projection
\begin{equation}
  \mathcal{Q}_{Y/X}(2)\cong\mathcal{O}_X\oplus\mathcal{O}_X(1)\to\mathcal{O}_X.
\end{equation}

\begin{definition}
  An \emph{algebraic quadruple} is a tuple~$(E,V,\phi,\xi)$, where~$E$ and~$V$ are vector spaces of dimension~3, $\phi\colon\Sym^2E\to V$ is a surjective morphism defining a basepoint-free net of conics, and~$\xi\colon V\to k$ is a non-zero morphism.
\end{definition}
The procedure described above is a way of getting an algebraic quadruple from a morphism~$f\colon\mathbb{P}^1\to\mathbb{P}^1$ of degree~4, by taking~$(E_{Y/X},V_{Y/X},\phi_{Y/X},\xi_{Y/X})$. The main result of this section is that there exists a correspondence between such morphisms~$f$ and these algebraic quadruples.

From an algebraic quadruple we can define a morphism~$f\colon\mathbb{P}^1\to\mathbb{P}^1$ of degree~4. For this we consider the morphism~$g\colon Z\to T$ defined in \eqref{equation:morphism-g}. From~$\xi\in\mathbb{P}(V^\vee)$ we obtain a line~$X$ inside~$\mathbb{P}(V)$. Then define~$f\colon Y\to X$ using the fibre product
\begin{equation}
  \begin{tikzcd}
    Y \arrow[d, "f"] \arrow[r, hook] & Z \arrow[d, "g"] \\
    X \arrow[r, hook, "i"] & T.
  \end{tikzcd}
\end{equation}
As explained in \cref{corollary:Yw-is-conic}, we can identify~$Y$ with the conic parametrised by the point~$\xi\in\mathbb{P}(V^\vee)$ in the net of conics. Hence if~$\xi$ is taken outside the discriminant locus in~$\mathbb{P}(V^\vee)$, then~$Y$ is a smooth conic, hence we get a morphism~$f\colon\mathbb{P}^1\to\mathbb{P}^1$ of degree~4. The following theorem shows that the algebraic quadruple associated to this morphism is isomorphic to the original algebraic quadruple.

\begin{theorem}
  \label{theorem:quadruples-are-isomorphic}
  Let~$(E,V,\phi,\xi)$ be an algebraic quadruple such that~$\xi$ defines a point outside the discriminant locus of the net of conics~$\phi$. Then
  \begin{equation}
    (E,V,\phi,\xi)\cong(E_{Y/X},V_{Y/X},\phi_{Y/X},\xi_{Y/X}).
  \end{equation}
\end{theorem}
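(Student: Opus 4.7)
The plan is to exploit the fact that, by construction, $f\colon Y\to X$ is obtained from $g\colon Z\to T$ by base change along the closed immersion $i\colon X\hookrightarrow T$ cut out by $\xi\in V^\vee$. Since $g$ is finite flat and $i$ is a regular embedding of a smooth divisor, flat base change will give natural isomorphisms $\mathcal{F}_{Y/X}\cong i^*\overline{\mathcal{F}}$, $\omega_{Y/X}\cong i^*\omega_{Z/T}$, and $\mathcal{Q}_{Y/X}\cong i^*\overline{\mathcal{Q}}$, compatible with the quotient $\Sym^2\mathcal{F}_{Y/X}\twoheadrightarrow\mathcal{Q}_{Y/X}$. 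The rest of the argument then consists of computing $\HH^0$ of restrictions of the sheaves described in \cref{proposition:Fbar-Qbar}.

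For the identification $E_{Y/X}\cong E$ I would use that $\overline{\mathcal{F}}(1)\cong A_2\otimes_k\mathcal{O}_T$ is a trivial bundle on $T$, so that $\mathcal{F}_{Y/X}(1)\cong A_2\otimes_k\mathcal{O}_X$ and $E_{Y/X}\cong A_2$; the graded Frobenius pairing $A_1\otimes A_2\to A_3$ from \cref{lemma:graded-Frobenius}, together with $A_1=E^\vee$, then yields $A_2\cong E\otimes_k A_3$. For $V_{Y/X}\cong V$, I would combine $\overline{\mathcal{Q}}(2)\cong\Omega^1_T(2)\otimes_k A_3^{\otimes 2}$ with the conormal sequence $0\to\mathcal{O}_X(-1)\to i^*\Omega^1_T\to\Omega^1_X\to 0$, which splits and gives $i^*\Omega^1_T(2)\cong\mathcal{O}_X\oplus\mathcal{O}_X(1)$. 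The vanishings $\HH^0(\mathbb{P}^2,\Omega^1_T(1))=\HH^1(\mathbb{P}^2,\Omega^1_T(1))=0$, both read off from the Euler sequence, imply via $0\to\Omega^1_T(1)\to\Omega^1_T(2)\to i_*i^*\Omega^1_T(2)\to 0$ that $\HH^0(T,\Omega^1_T(2))\to\HH^0(X,i^*\Omega^1_T(2))$ is an isomorphism, and the Euler sequence identifies the source with $\wedge^2V^\vee\cong V\otimes(\det V)^{-1}$.

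To match $\phi_{Y/X}$ with $\phi$ I would then apply $\HH^0(X,-)$ to the restriction of the quotient $\Sym^2\overline{\mathcal{F}}(2)\twoheadrightarrow\overline{\mathcal{Q}}(2)$: by \cref{proposition:Fbar-Qbar} this quotient factors through $\beta$, and the commutative diagram of \cref{lemma:commutative-diagram-algebra-structure} identifies the resulting map with the original $\phi\colon\Sym^2 E\to V$ once the Frobenius identifications have been made. For the character $\xi_{Y/X}$, the key point is that the projection $\mathcal{Q}_{Y/X}(2)\twoheadrightarrow\mathcal{O}_X$ corresponds, after restriction to $X$, to the quotient $i^*\Omega^1_T(2)\twoheadrightarrow\Omega^1_X(2)$ of the conormal sequence. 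The kernel of the induced map $\wedge^2V^\vee=\HH^0(T,\Omega^1_T(2))\to\HH^0(X,\Omega^1_X(2))$ is then $\xi\wedge V^\vee$, which under the canonical iso $\wedge^2V^\vee\cong V\otimes(\det V)^{-1}$ corresponds to $\ker(\xi)\otimes(\det V)^{-1}$; hence the kernel of $\xi_{Y/X}$ in $V_{Y/X}\cong V$ is $\ker(\xi)$, and $\xi_{Y/X}$ coincides with $\xi$ up to scalar.

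The main obstacle will be bookkeeping: aligning all of the one-dimensional twists (by $A_3$, by $\det V$, and those coming from Serre duality on $\mathbb{P}^1$) so that the isomorphisms produced in the previous steps are coherent rather than only defined up to scalar, and in particular so that $\det V\cong A_3^{\otimes 2}$ emerges naturally from the construction. This will require a careful audit of \cref{lemma:commutative-diagram-algebra-structure}, \cref{proposition:Fbar-Qbar} and the conormal sequence, but I do not expect any essentially new ingredients beyond what has already been set up in \cref{subsection:branched-coverings}.
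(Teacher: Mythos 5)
Your proposal is correct and follows essentially the same route as the paper's proof: restrict the identifications of \cref{proposition:Fbar-Qbar} along $i\colon X\hookrightarrow T$, compute global sections via the Euler and (co)normal sequences on $\mathbb{P}(V)$ and the line $X$, match $\phi_{Y/X}$ with $\phi$ through \cref{lemma:commutative-diagram-algebra-structure}, and pin down $\xi_{Y/X}$ using the uniqueness (up to scalar) of the projection $\mathcal{O}_X\oplus\mathcal{O}_X(1)\to\mathcal{O}_X$. The only differences are cosmetic --- you identify $\HH^0(T,\Omega^1_T(2))$ with $\wedge^2V^\vee$ and compute the kernel $\xi\wedge V^\vee$, where the paper uses the sequence $0\to\mathcal{O}_T(-1)\to V\otimes\mathcal{O}_T\to\Omega^1_T(2)\to 0$ to get $V$ directly and argues via the factorization of $\xi\otimes\id_{\mathcal{O}_X}$ --- and the one-dimensional twists you worry about are harmless since an isomorphism of quadruples only requires the identifications up to scalar.
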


\begin{proof}
  Using \cref{lemma:description-F,proposition:description-Q,proposition:Fbar-Qbar} and the fact that~$X$ is a curve of degree~1 inside~$T$ we have a commutative diagram
  \begin{equation}
    \label{equation:commutative-diagram-phi-YX}
    \begin{tikzcd}
      \Sym^2\mathcal{F}_{Y/X} \arrow[r, two heads] \arrow[d, equals] & \mathcal{Q}_{Y/X} \arrow[r, "\cong"] \arrow[d, equals] & \mathcal{O}_X(-2)\oplus\mathcal{O}_X(-1) \\
      \Sym^2(i^*\overline{\mathcal{F}}) \arrow[r, two heads] & i^*\overline{\mathcal{Q}} \arrow[r, "\cong"] & i^*\Omega_{T/k}^1 \arrow[u]
    \end{tikzcd}
  \end{equation}
  Because~$\Omega_{T/k}^1(2)$ sits inside the twist of the Euler exact sequence
  \begin{equation}
    0\to\mathcal{O}_T(-1)\overset{\gamma}{\to}V\otimes_k\mathcal{O}_T\to\Omega_{T/k}^1(2)\to 0
  \end{equation}
  we get that~$\HH^0(\Omega_{T/k}^1(2))=V$. From this we get isomorphisms
  \begin{equation}
      V \cong V_{Y/X}
  \end{equation}
  and
  \begin{equation}
    \begin{aligned}
      E_{Y/X}
      &=\HH^0(\mathcal{F}_{Y/X}(1)) \\
      &\cong\HH^0(i^*(\overline{\mathcal{F}}(1))) & \text{by \eqref{equation:commutative-diagram-phi-YX}} \\
      &\cong\HH^0(i^*(A_2 \otimes_k \mathcal{O}_T)) & \textrm{\cref{proposition:Fbar-Qbar}} \\
      &\cong\HH^0(A_2 \otimes_k \mathcal{O}_X) \\
      &\cong A_2 \\
      &\cong E & \text{by \eqref{equation:comparison:AiE1} \textrm{ and  }\eqref{equation:comparison:AiE2}}
     \end{aligned}
  \end{equation}
  We also get that~$\phi=\phi_{Y/X}$ by \eqref{equation:commutative-diagram-phi-YX}.

  Finally to show that~$\xi_{Y/X}=\xi$ we need to show that~$\xi$ equals the composition
  \begin{equation}
    \begin{tikzcd}
      V=\HH^0(T,\Omega_{T/k}^1(2)) \arrow[r] & \HH^0(X,i^*\Omega_{T/k}^1(2)) \arrow[d, equals] \\
      & \HH^0(X,\mathcal{O}_X\oplus\mathcal{O}_X(1)) \arrow[r] & \HH^0(X,\mathcal{O}_X)=k.
    \end{tikzcd}
  \end{equation}
  The fiber in~$p\in\mathbb{P}(V)$ of the inclusion is (up to scalar) the morphism~$p\colon k\to V$. If we pull back the short exact sequence along the closed immersion~$i$ we see that the image of the fibers of all pullbacks are contained in~$\ker\xi$, hence we have a complex
  \begin{equation}
    \mathcal{O}_X(-1)\xrightarrow{i^*\gamma} V\otimes_k\mathcal{O}_X\xrightarrow{\xi\otimes\id_{\mathcal{O}_X}}\mathcal{O}_X.
  \end{equation}
  Up to scalars this must be the unique non-zero morphism
  \begin{equation}
    i^*\Omega_{T/k}^1\to\mathcal{O}_X.
  \end{equation}
  By taking global sections we see that~$\xi_{Y/X}=\xi$.
\end{proof}

Unfortunately, we do not have a complete correspondence between the two pieces of geometric data. It is not clear that we can start from~$f\colon\mathbb{P}^1\to\mathbb{P}^1$, produce an algebraic quadruple using geometric arguments, such that the associated morphism is the original~$f$ (up to automorphisms of~$\mathbb{P}^1$). We phrase this as a conjecture.
\begin{conjecture}
  \label{conjecture:inverse}
  Let~$f=(f_1,f_2)$ be a basepoint-free pencil of binary quartics. Then there exists an algebraic quadruple~$(E,V,\phi,\xi)$ such that~$f$ is the associated morphism~$\mathbb{P}^1\to\mathbb{P}^1$.
\end{conjecture}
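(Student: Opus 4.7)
The natural candidate is the quadruple $(E_{Y/X}, V_{Y/X}, \phi_{Y/X}, \xi_{Y/X})$ already associated to $f$ by the constructions of \cref{section:nc-P1-as-clifford}. By \cref{proposition:net-YX-is-basepoint-free} the net $\phi_{Y/X}$ is basepoint-free, and $\xi_{Y/X}$ is a non-zero character by its definition as the projection of $\mathcal{Q}_{Y/X}(2) \cong \mathcal{O}_X \oplus \mathcal{O}_X(1)$ onto $\mathcal{O}_X$. Moreover, the target line $X_{\xi_{Y/X}} \subset \mathbb{P}(V_{Y/X})$ is canonically identified with $X$, since $\ker \xi_{Y/X} = \HH^0(X,\mathcal{O}_X(1))$ and the complete linear system of $\mathcal{O}_X(1)$ realises $X \cong \mathbb{P}(\HH^0(X,\mathcal{O}_X(1)))$. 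It thus remains to exhibit an isomorphism $Y \cong Y_{\xi_{Y/X}} = g^{-1}(X_{\xi_{Y/X}}) \subset \mathbb{P}(E_{Y/X})$ compatible with the projections to $X$; once this is done, \cref{theorem:quadruples-are-isomorphic} identifies $f$ with the morphism produced by the fiber-product construction.

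To exhibit such an isomorphism, I would construct a morphism $\iota\colon Y \to \mathbb{P}(E_{Y/X})$ as follows. Working étale-locally on $X$, the stalk $(f_*\mathcal{O}_Y)_x$ is a relative Frobenius $k(x)$-algebra of rank~$4$ with idempotents $e_{y_1},\dotsc,e_{y_4}$ corresponding to the points of $f^{-1}(x)$; their images $\overline{e}_{y_i}$ in $(\mathcal{F}_{Y/X})_x$ determine four points in $\mathbb{P}((\mathcal{F}_{Y/X}(1))_x) \cong \mathbb{P}(E_{Y/X})$ via the canonical evaluation isomorphism for the trivial bundle $\mathcal{F}_{Y/X}(1) \cong \mathcal{O}_X^{\oplus 3}$ of \cref{lemma:description-F}. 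Sending $y_i$ to its corresponding point defines a rational map $Y \dashrightarrow \mathbb{P}(E_{Y/X})$ on the étale locus of $f$, which extends uniquely to a morphism $\iota$ since $Y$ is a smooth projective curve.

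The key verifications are then: (a) $\iota$ lands on the conic $Y_{\xi_{Y/X}}$, i.e.\ for each $y \in Y$ the quadratic form $q = \xi_{Y/X} \circ \phi_{Y/X}$ vanishes on $\iota(y)$; (b) the composition $g \circ \iota$ factors through $X_{\xi_{Y/X}}$ and equals $f$ under the identification $X \cong X_{\xi_{Y/X}}$; and (c) $\iota$ is a closed immersion. Claim (b) follows from \cref{lemma:g-gives-pencil-of-conics}, as the pencil of conics having $\iota(y)$ as a basepoint is tautologically the pencil parametrised by $f(y)$. Claim (c) over the étale locus is immediate, because distinct points of $f^{-1}(x)$ give idempotents that remain linearly independent modulo $\mathcal{O}_X$ (up to the single relation $\sum_i e_{y_i} = 1$), and an injective morphism from a smooth curve to a projective variety is a closed immersion. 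Once (a) is established, smoothness of $Y$ forces $Y_{\xi_{Y/X}}$ to be smooth, placing $\xi_{Y/X}$ outside the discriminant locus as required.

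The main obstacle is claim (a), which demands matching the specific linear functional $\xi_{Y/X}$—determined by the splitting $\mathcal{Q}_{Y/X}(2) \cong \mathcal{O}_X \oplus \mathcal{O}_X(1)$ of \cref{proposition:description-Q}, unique only up to the ambiguity in the choice of projection onto the $\mathcal{O}_X$ summand—with the quadratic form cutting out the locus swept by the idempotents of $(f_*\mathcal{O}_Y)_x / \mathcal{O}_{X,x}$. I would first seek an intrinsic description of this projection in terms of the Frobenius trace form, which would allow the local verification $\xi_{Y/X}(\phi_{Y/X}(\overline{e}_{y_i} \otimes \overline{e}_{y_i})) = 0$ to be reduced to the identity $e_{y_i}^2 = e_{y_i}$ in the Frobenius pair. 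Failing that, a more pedestrian route is to proceed by the classification of pencils of binary quartics in \cite{MR1636607}: for each possible symbol one exhibits by hand an algebraic quadruple of the appropriate discriminant/Jacobian type which realises the given ramification data, and appeals to \cref{theorem:quadruples-are-isomorphic} to conclude that the resulting morphism coincides with $f$ up to the action of $\PGL_2 \times \PGL_2$.
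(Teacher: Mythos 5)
This statement is explicitly left as an open conjecture in the paper: \cref{conjecture:inverse} has no proof, and the surrounding text makes this clear (``Unfortunately, we do not have a complete correspondence between the two pieces of geometric data\ldots We phrase this as a conjecture''), as does the remark after \cref{corollary:center-in-azumaya} that the corollary ``does \emph{not} imply \cref{conjecture:inverse}.'' There is therefore no proof in the paper to compare against.

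As a free-standing attempt, your proposal correctly identifies the natural candidate quadruple $(E_{Y/X},V_{Y/X},\phi_{Y/X},\xi_{Y/X})$ and correctly isolates the crux: one must exhibit $Y\cong g^{-1}(X_{\xi_{Y/X}})$ over $X$, with the local verification that the idempotent classes $\overline{e}_{y_i}$ actually lie on the conic cut out by $\xi_{Y/X}\circ\phi_{Y/X}$. But this is precisely the content of the conjecture, and your proposal does not close it: you yourself label claim (a) ``the main obstacle'' and neither of your two suggested routes (an intrinsic Frobenius-trace description of the projection $\mathcal{Q}_{Y/X}(2)\to\mathcal{O}_X$, or a case-by-case appeal to the classification of \cite{MR1636607}) is actually carried out. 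The fallback via the classification only shows that some pencil of each symbol arises from a quadruple, not that the given $f$ does; this is exactly the partial evidence the paper already records in the ``Explicit correspondence'' paragraph and \cref{table:case-by-case}. The appeal to \cref{theorem:quadruples-are-isomorphic} at the end of your first paragraph is also unneeded and slightly misapplied: that theorem concerns the round trip starting from a quadruple, and moreover requires $\xi$ to lie outside the discriminant, which in your argument only follows from (a) being established later. Once (a), (b), (c) are proved, the conclusion is immediate without invoking it. In short, what you have written is a well-aimed reduction that matches the paper's own understanding of where the difficulty lies, but it is not a proof.
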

Luckily, this does not prevent us from completing the comparison between the associated categories, which we perform in \cref{subsection:categorical-comparison}. The geometric correspondence would also follow from a good understanding of the Morita theory of the noncommutative~$\mathbb{P}^1$\dash bundles, but this does not seem within reach.

Moreover, in the classification of the pencils of binary quartics there are~13~types, and for each of these types we can construct an algebraic quadruple giving rise to this pencil of binary quartics. And whenever there are degrees of freedom within a certain class, the degrees of freedom agree with the degrees of freedom in algebraic quadruples of the corresponding type.

\paragraph{Explicit correspondence}
We will now relate the classification of pencils of binary quartics from \cite{MR1636607} to algebraic quadruples, i.e.~to the classification of nets of conics from \cite{MR0432666} and the choice of a smooth conic inside the net. We do this by giving for every element in the classification of pencils of binary quartics the discriminant of the net of conics and the position of the point which is blown up, corresponding to the choice of the smooth conic inside the net.

To understand which type of~$f$ we obtain in from an algebraic quadruple, we only need to describe what the inverse image of each point looks like, i.e.~what the ramification behaviour is. For this, it suffices to understand what the possible intersections of lines from the pencil of lines with the discriminant cubic are, because there is a correspondence between base loci of pencils of conics and the structure of their singular fibres.

\begin{enumerate}
  \item The Segre symbol~$[1,1,1]$ is the generic case, and is not relevant in the construction. It corresponds to a generic line intersecting the discriminant cubic in~3~points, i.e.~a pencil of conics with~3 singular fibres.
  \item The Segre symbol~$[2,1]$ corresponds to ramification of type~$(2,1,1)$, and to realise it we must have a tangent line to the discriminant cubic intersecting the discriminant cubic in another (smooth) point: we have two singular fibres of rank~2, one of which is counted twice.
  \item The Segre symbol~$[3]$ corresponds to ramification of type~$(3,1)$, and to realise it we must have a tangent at an inflection point: we have one singular fibre of rank~2 which is counted thrice.
  \item The Segre symbol~$[(1,1),1]$ corresponds to ramification of type~$(2,2)$, and to realise it we must have a line through a node intersecting the discriminant cubic in another point.
  \item The Segre symbol~$[(2,1)]$ corresponds to ramification of type~$(4)$, and to realise it we must have a line through a node intersecting the discriminant only in this node.
\end{enumerate}

Using this it is straightforward to set up the correspondence given in \cref{table:case-by-case}. It only remains to observe that on a nodal cubic there are~3~inflections points whose tangent lines are not concurrent, and that on an elliptic curve in~$\mathbb{P}^2$ with~$j=0$ there exist sets of~3~inflection points whose tangent lines \emph{are} concurrent. If~$j\neq 0$ then this is not possible (and recall that there are~9~inflection points in total). This follows immediately from the proof of \cite[lemma~1]{MR2583779}, where the inflection tangents are concurrent if~$c=0$, which gives the elliptic curve defined by~$x^3+y^3+z^3$, which indeed has~$j$\dash invariant~0.

\begin{table}[p]
  \centering
  \small
  \begin{tabular}{cllc}
    \toprule
    label         & normal form                                                                                                      & discriminant cubic    & moduli \\
                  & location of the point \\
    \midrule
    A             & $[(4)(4)]$                                                                                                       & conic and line        & 0 \\
                  & \multicolumn{3}{p{10cm}}{intersection of tangent lines at conic in singularities} \\
    \addlinespace
    B             & $[(4)(2,2)(2,1,1)]$                                                                                              & conic and line        & 0 \\
                  & \multicolumn{3}{p{10cm}}{intersection of tangent line at conic in singularity and generic line} \\
    \addlinespace
    C             & $[(4)(3,1)(2,1,1)]$                                                                                              & nodal cubic           & 0 \\
                  & \multicolumn{3}{p{10cm}}{intersection of tangent line at singularity and tangent line of inflection point} \\
    \addlinespace
    D             & $[(4)(2,1,1)(2,1,1)(2,1,1)]$                                                                                     & nodal cubic           & 1 \\
                  & \multicolumn{3}{p{10cm}}{intersection of tangent line at singularity and tangent line at non-inflection point} \\
    \addlinespace
    E             & $[(2,2)(2,2)(2,2)]$                                                                                              & triangle of lines     & 0 \\
                  & \multicolumn{3}{p{10cm}}{any point} \\
    \addlinespace
    F             & $[(2,2)(2,2)(2,1,1)(2,1,1)$                                                                                      & conic and line        & 1 \\
                  & \multicolumn{3}{p{10cm}}{intersection of tangent lines at two smooth points on conic} \\
    \addlinespace
    G             & $[(2,2)(3,1)(3,1)]$                                                                                              & nodal cubic           & 0 \\
                  & \multicolumn{3}{p{10cm}}{intersection of tangent lines at inflection points} \\
    \addlinespace
    H             & $[(2,2)(3,1)(2,1,1)(2,1,1)]$                                                                                     & nodal cubic           & 1 \\
                  & \multicolumn{3}{p{10cm}}{intersection of non-tangent line through singularity and tangent line at inflection point} \\
    \addlinespace
    I             & $[(3,1)(3,1)(3,1)]$                                                                                              & elliptic curve, $j=0$ & 0 \\
                  & \multicolumn{3}{p{10cm}}{intersection of three concurrent tangent lines at inflection points} \\
    \addlinespace
    J             & $[(3,1)(3,1)(2,1,1)(2,1,1)]$                                                                                     & elliptic curve        & 1 \\
                  & \multicolumn{3}{p{10cm}}{generic intersection of two tangent lines at inflection points} \\
    \addlinespace
    K             & $[(2,2)(2,1,1)(2,1,1)(2,1,1)(2,1,1)]$                                                                            & nodal cubic           & 2 \\
                  & \multicolumn{3}{p{10cm}}{intersection of two generic tangent lines} \\
    \addlinespace
    L             & $[(3,1)(2,1,1)(2,1,1)(2,1,1)(2,1,1)]$                                                                            & elliptic curve        & 2 \\
                  & \multicolumn{3}{p{10cm}}{generic intersection of tangent line at inflection point and another tangent line} \\
    \addlinespace
    M             & $[(2,1,1)(2,1,1)(2,1,1)(2,1,1)(2,1,1)(2,1,1)]$                                                                   & elliptic curve        & 3 \\
                  & \multicolumn{3}{p{10cm}}{generic point} \\
    \bottomrule
  \end{tabular}
  \caption{Correspondence between pencils of binary quartics and algebraic quadruples}
  \label{table:case-by-case}
\end{table}

\subsection{The categorical comparison}
\label{subsection:categorical-comparison}
The categorical comparison would follow from \cref{theorem:quadruples-are-isomorphic} and a positive answer to \cref{conjecture:inverse}. Lacking this, we give an algebraic proof for the fact that a noncommutative~$\mathbb{P}^1$\dash bundle gives rise to an algebraic quadruple. The only thing left to show for this is that the center of the blowup is outside the discriminant. From the description in \cref{section:nc-P1-as-clifford} and the computation in \cite[remark~34]{MR3847233} it suffices to show that the global dimension of the associated abelian category is finite.

\begin{proposition}
  \label{proposition:gldim-finite}
  The global dimension of~$\qgr(\mathbb{S}({}_f(\mathcal{O}_{\mathbb{P}^1})_{\id}))$ is finite.

  \begin{proof}
    By \cite[lemma~4.6]{1503.03992v4} we have that~$\Pi_m^*$ is an exact functor, whilst~\cite[lemma~5.3]{1503.03992v4} shows that~$\Pi_{m,*}$ has cohomological dimension~1. The Grothendieck spectral sequence for~$\Hom$ in~$\qgr(\mathbb{S}({}_f(\mathcal{O}_{\mathbb{P}^1})_{\id}))$ and~$\Pi_{m,*}$ then shows that the cohomological dimension of~$\Pi_m^*(\mathcal{O}_{\mathbb{P}^1}(i))$ is~2, i.e.~that~$\Ext^n(\Pi_m^*(\mathcal{O}_{\mathbb{P}^1}(i)),\mathcal{M})=0$ for~$n\geq 3$, using \cite[lemma~4.7]{1503.03992v4}.

    From \cite[theorem~5.1, theorem~5.5]{1503.03992v4} we have that there exists a full and strong exceptional collection in~$\derived^\bounded(\qgr(\mathbb{S}({}_f(\mathcal{O}_{\mathbb{P}^1})_{\id}))$ consisting of objects of cohomological dimension~2. Because the global dimension of this endomorphism algebra is finite (as it is the path algebra of an acyclic quiver modulo an ideal of relations), and the cohomological dimension of the functor realising the equivalence is~2, we can compute~$\Ext^n$ in~$\qgr$ using uniformly bounded complexes in~$\derived^\bounded(kQ/I)$, and we can conclude that the global dimension is indeed finite.
  \end{proof}
\end{proposition}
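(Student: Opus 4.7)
The plan is to reduce the statement to a global dimension computation for a finite-dimensional algebra, by exploiting the full strong exceptional collection described in \eqref{equation:bundle-collection} and the adjoint pair $(\Pi_m^*,\Pi_{m,*})$ between the abelian category $\mathcal{C} \coloneqq \qgr(\mathbb{S}({}_f(\mathcal{O}_{\mathbb{P}^1})_{\id}))$ and $\Qcoh(\mathbb{P}^1)$.

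First I would show that each generator $\Pi_m^*\mathcal{O}_{\mathbb{P}^1}(i)$ has bounded cohomological dimension as an object of $\mathcal{C}$. The key inputs, both taken from \cite{1503.03992v4}, are that $\Pi_m^*$ is an exact functor (hence commutes with $\Ext$) and that $\Pi_{m,*}$ has cohomological dimension at most $1$. Using the Grothendieck spectral sequence
\begin{equation*}
  E_2^{p,q}=\Ext_{\mathbb{P}^1}^p(\mathcal{O}_{\mathbb{P}^1}(i),R^q\Pi_{m,*}\mathcal{M})\Longrightarrow \Ext_{\mathcal{C}}^{p+q}(\Pi_m^*\mathcal{O}_{\mathbb{P}^1}(i),\mathcal{M})
\end{equation*}
and the fact that $\mathbb{P}^1$ has global dimension $1$, the $E_2$\dash page is concentrated in $p\in\{0,1\}$ and $q\in\{0,1\}$. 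Hence $\Ext_{\mathcal{C}}^n(\Pi_m^*\mathcal{O}_{\mathbb{P}^1}(i),-)=0$ for $n\geq 3$, so every object appearing in \eqref{equation:bundle-collection} has cohomological dimension at most $2$.

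Next I would invoke that the collection \eqref{equation:bundle-collection} is a full strong exceptional collection, so the direct sum $T$ of its four objects is a tilting object. Let $B\coloneqq \End_{\mathcal{C}}(T)$; since $B$ is the path algebra of an acyclic finite quiver modulo relations, it has finite global dimension. The tilting theorem together with the bound on the cohomological dimension of $T$ (at most $2$, by the previous paragraph) yields an equivalence $\RHom(T,-)\colon \derived^\bounded(\mathcal{C})\xrightarrow{\sim}\derived^\bounded(\mod B)$. Because this equivalence has finite amplitude and its inverse takes bounded complexes back to bounded complexes, any Ext\dash computation in $\mathcal{C}$ is carried out by a uniformly bounded complex of $B$\dash modules.

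The hard part is justifying rigorously that the cohomological amplitude bound on $T$ translates into a global bound on $\Ext$\dash vanishing in $\mathcal{C}$; the conceptually clean way is to note that for any two objects $\mathcal{M},\mathcal{N}\in\mathcal{C}$ one has
\begin{equation*}
\Ext_{\mathcal{C}}^n(\mathcal{M},\mathcal{N}) = \Hom_{\derived^\bounded(\mod B)}(\RHom(T,\mathcal{M}),\RHom(T,\mathcal{N})[n]),
\end{equation*}
and both images are bounded complexes with amplitude controlled by the cohomological dimension of $T$; combining this with $\gldim B<\infty$ yields an explicit finite bound on $n$ for which this can be nonzero. This gives the desired finiteness of the global dimension of $\mathcal{C}$.
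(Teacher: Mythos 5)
Your proposal is correct and follows essentially the same route as the paper: the same Grothendieck spectral sequence argument (exactness of $\Pi_m^*$ plus cohomological dimension $1$ of $\Pi_{m,*}$ and of $\mathbb{P}^1$) bounds the cohomological dimension of the generators by $2$, and then the full strong exceptional collection with finite-global-dimension endomorphism algebra transports Ext computations to uniformly bounded complexes over $kQ/I$, exactly as in the paper's proof.
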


\begin{remark}
  \label{remark:gldim-2}
  It would be interesting to give a direct proof that the global dimension of this surface-like category is precisely~2. We expect that this can be deduced from \cite[theorem~6.1]{MR3565443} using an appropriate analogue of the local-to-global spectral sequence for Ext.

  Observe that from \cref{theorem:categorical-comparison} we can a posteriori conclude that the global dimension is indeed~2, as the global dimension of the equivalent category constructed as a blowup is~2 by \cref{proposition:gldim-2}.
\end{remark}

\begin{corollary}
  \label{corollary:center-in-azumaya}
  The center of the blowup in \eqref{equation:final-description-as-blowup} is contained in the Azumaya locus of the maximal order on~$\mathbb{P}^2$.

  \begin{proof}
    Assume on the contrary that~$x$ is contained in the discriminant. Then blowing up at~$x$ yields a sheaf of algebras on~$\Bl_x\mathbb{P}^2$ which is not of finite global dimension (and which is not a maximal order), using the local computation from \cite[remark~34]{MR3847233}. But this contradicts \cref{proposition:gldim-finite}, as these two categories are equivalent.
  \end{proof}
\end{corollary}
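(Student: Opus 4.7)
The plan is to argue by contradiction, leveraging \cref{proposition:gldim-finite}. Suppose the center $x$ of the blowup lay on the ramification divisor $C$ of the maximal order $\mathcal{S}$ on $\mathbb{P}^2$. The goal is then to exhibit a stalk of $p^{*}\mathcal{S}$ whose global dimension is infinite, contradicting the fact that the abelian category of coherent sheaves over $p^{*}\mathcal{S}$ is equivalent to $\qgr(\mathbb{S}({}_f(\mathcal{O}_{\mathbb{P}^1})_{\id}))$, which has finite global dimension.

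First I would recall that the categorical equivalence between the noncommutative $\mathbb{P}^1$-bundle and $\coh p^{*}\mathcal{S}$ has been established by chaining \cref{corollary:H-is-clifford} and \cref{corollary:pS-is-clifford} through \eqref{equation:final-description}; both sides are equivalent to the $\qgr$ of the same Clifford algebra over $\Sym_{\mathbb{P}^1}(\mathcal{O}_{\mathbb{P}^1}\oplus\mathcal{O}_{\mathbb{P}^1}(1))$. Hence by \cref{proposition:gldim-finite} the global dimension of $\coh p^{*}\mathcal{S}$ is finite, and in particular every stalk $(p^{*}\mathcal{S})_{y}$ for $y\in\Bl_x\mathbb{P}^2$ has finite global dimension.

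Next I would exploit the local structure of $\mathcal{S}$ at a ramification point. Away from $C$ the order $\mathcal{S}$ is Azumaya, so if $x\notin C$ there is nothing subtle; the issue only arises if $x\in C$. The étale local model of a maximal order at a point of the ramification divisor is described in the local computation referenced in \cite[remark~34]{MR3847233}. Pulling this model back along the blowup $p$ yields an explicit local ring to which one can attach the associated noncommutative structure, and the computation in op.~cit.\ shows that the resulting algebra has \emph{infinite} global dimension at points of the exceptional divisor (the blowup fails to remain a maximal order precisely because the ramification meets the center). This gives the desired contradiction and forces $x\in\mathbb{P}^2\setminus C$, i.e.\ $x$ lies in the Azumaya locus.

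The main obstacle is making sure that the local computation of \cite[remark~34]{MR3847233} is genuinely applicable in our setting: the étale local picture must be preserved under the categorical equivalence established earlier, so that finite global dimension of $\qgr(\mathbb{S}({}_f(\mathcal{O}_{\mathbb{P}^1})_{\id}))$ really does translate into finite global dimension of every stalk of $p^{*}\mathcal{S}$. Once that is in place, the contradiction is immediate and the corollary follows.
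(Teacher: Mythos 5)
Your proposal follows the same strategy as the paper: argue by contradiction, invoke the local computation from \cite[remark~34]{MR3847233} to show that blowing up a ramification point yields a sheaf of algebras of infinite global dimension, and contrast this with the finite global dimension established in \cref{proposition:gldim-finite} via the categorical equivalence. The paper's proof is terser but relies on exactly the same ingredients, so your reasoning is correct and matches the intended argument.
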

This does \emph{not} imply \cref{conjecture:inverse}: there could be non-equivalent~$f$ giving rise to an equivalent noncommutative~$\mathbb{P}^1$\dash bundle, and only one of these corresponds to an algebraic quadruple. In other words: it is not clear that there exists a surjection from pencils of binary quartics to algebraic quadruples, but if we only care about these objects up to equivalence of the associated abelian categories, the comparison is complete.

We are now in the position to state and prove the final comparison theorem, giving a correspondence between noncommutative~$\mathbb{P}^1$\dash bundles of rank~$(4,1)$ and blowups of quaternionic noncommutative planes outside the discriminant. In this way, we obtain a purely noncommutative incarnation of the isomorphism~$\Bl_x\mathbb{P}^2\cong\mathbb{F}_1$.

\begin{theorem}
  \label{theorem:categorical-comparison}
  Every noncommutative~$\mathbb{P}^1$\dash bundle~$\qgr(\mathbb{S}({}_f (\mathcal{O}_{\mathbb{P}^1})_{\id}))$ is equivalent to~$\coh p^*\mathcal{S}$ for a quaternionic order~$p^*\mathcal{S}$ on~$\Bl_x\mathbb{P}^2$ and vice versa.

  \begin{proof}
    First let~$\qgr(\mathbb{S}({}_f (\mathcal{O}_{\mathbb{P}^1})_{\id}))$ be a noncommutative~$\mathbb{P}^1$\dash bundle. Using the techniques in \cref{section:nc-P1-as-clifford} we know that~$\qgr(\mathbb{S}({}_f (\mathcal{O}_{\mathbb{P}^1})_{\id}))$ is equivalent to a category of the form
    \begin{equation}
     \qgr\clifford_{\Sym(\mathcal{O}_{\mathbb{P}^1} \oplus \mathcal{O}_{\mathbb{P}^1}(1))}\big(E \otimes_k \Sym(\mathcal{O}_{\mathbb{P}^1} \oplus \mathcal{O}_{\mathbb{P}^1}(1)), q\big)
    \end{equation}
    This category is completely determined by a quadruple~$(E,V,\phi,\xi)$. Using \cref{proposition:net-YX-is-basepoint-free}, we know that this quadruple is algebraic. In particular the category is equivalent to one of the form~$\coh p^* \mathcal{S}$. By \cref{corollary:center-in-azumaya} the point (determined by~$\xi$) in~$\mathbb{P}(V^\vee) \cong \mathbb{P}^2$ blown up by~$p\colon \mathbb{F}_1 \rightarrow \mathbb{P}^2$ lies in the Azumaya locus of~$\mathcal{S}$.

    Conversely, let~$\coh p^* \mathcal{S}$ be the category associated to a quaternionic order on~$\mathbb{F_1}$. The techniques in \cref{section:blowup-as-clifford} tell us that this category is equivalent to a category of the form
    \begin{equation}
     \qgr\clifford_{\Sym(\mathcal{O}_{\mathbb{P}^1} \oplus \mathcal{O}_{\mathbb{P}^1}(1))}\big(E \otimes_k \Sym(\mathcal{O}_{\mathbb{P}^1} \oplus \mathcal{O}_{\mathbb{P}^1}(1)), q\big),
    \end{equation}
    which again is determined by a quadruple~$(E,V,\phi,\xi)$. Using \cite[proposition~7]{MR1356364}, we know that this quadruple is algebraic and by construction~$\xi$ determines a point in the Azumaya locus of~$\mathcal{S}$. Using \cite[proposition~8]{MR1356364}, we get that this point determines a nonsingular conic in the net~$V^\vee$. We can then use \cref{theorem:quadruples-are-isomorphic}, to find a noncommutative~$\mathbb{P}^1$\dash bundle~$\qgr(\mathbb{S}({}_f (\mathcal{O}_{\mathbb{P}^1})_{\id}))$ equivalent~to $\coh p^* \mathcal{S}$.

  \end{proof}
\end{theorem}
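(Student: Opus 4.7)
The plan is to reduce both sides of the equivalence to the common Clifford algebra description developed in Sections \ref{section:nc-P1-as-clifford} and \ref{section:blowup-as-clifford}, and then match the underlying linear algebra data via Theorem \ref{theorem:quadruples-are-isomorphic}. Both target categories have been shown to be equivalent to $\qgr$ of a Clifford algebra of the shape
\begin{equation}
  \clifford_{\Sym(\mathcal{O}_{\mathbb{P}^1} \oplus \mathcal{O}_{\mathbb{P}^1}(1))}\bigl(E \otimes_k \Sym(\mathcal{O}_{\mathbb{P}^1} \oplus \mathcal{O}_{\mathbb{P}^1}(1)), q\bigr),
\end{equation}
with $q$ coming from a surjection $\phi\colon\Sym^2 E \twoheadrightarrow V$ and the extra datum of a linear form $\xi\colon V\to k$ cutting out the relevant $\mathbb{P}^1\subset\mathbb{P}(V)$. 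The theorem will follow once I verify that the data extracted from one side really is the data defining a construction on the other side.

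\emph{Forward direction.} Starting with a noncommutative $\mathbb{P}^1$\dash bundle $\qgr(\mathbb{S}({}_f (\mathcal{O}_{\mathbb{P}^1})_{\id}))$, I would apply the chain of equivalences from Section \ref{section:nc-P1-as-clifford} culminating in \cref{corollary:H-is-clifford} to extract a quadruple $(E_{Y/X}, V_{Y/X}, \phi_{Y/X}, \xi_{Y/X})$. By \cref{proposition:net-YX-is-basepoint-free} the net $\phi_{Y/X}$ is basepoint-free, so this quadruple is algebraic. \cref{proposition:gldim-finite} establishes finite global dimension of the bundle, and \cref{corollary:center-in-azumaya} then forces $\xi_{Y/X}$ to lie outside the discriminant of $\phi_{Y/X}$, i.e.\ in the Azumaya locus of the quaternionic order $\mathcal{S}$ associated to $(E_{Y/X}, V_{Y/X}, \phi_{Y/X})$. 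Reading the pipeline of Section \ref{section:blowup-as-clifford} in reverse, and using \cref{corollary:pS-is-clifford}, this Clifford algebra category is then identified with $\coh p^*\mathcal{S}$ for the corresponding blowup $p\colon\mathbb{F}_1\to\mathbb{P}^2$.

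\emph{Reverse direction.} For a quaternionic order $p^*\mathcal{S}$ on $\mathbb{F}_1$ with $x$ in the Azumaya locus, I would first use \cref{proposition:quaternionic-is-graded-clifford} to replace $\mathcal{S}$ by a graded Clifford algebra up to Zhang twist (which does not affect $\qgr$). By \cite[proposition~7]{MR1356364} the associated net of conics $\phi\colon\Sym^2 E \twoheadrightarrow V$ is basepoint-free, and by \cite[proposition~8]{MR1356364} the blown-up point corresponds to a smooth conic in the net, hence to a point $\xi\in\mathbb{P}(V^\vee)$ outside the discriminant. This produces an algebraic quadruple with $\xi$ avoiding the discriminant, so \cref{theorem:quadruples-are-isomorphic} provides a finite degree-$4$ morphism $f\colon\mathbb{P}^1\to\mathbb{P}^1$ realizing this quadruple. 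Section \ref{section:nc-P1-as-clifford} then identifies $\qgr(\mathbb{S}({}_f (\mathcal{O}_{\mathbb{P}^1})_{\id}))$ with the same Clifford algebra category as $\coh p^*\mathcal{S}$.

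\emph{Main obstacle.} The delicate point is the forward direction: the input data is only a morphism $f$, with no a priori interpretation as a net of conics, so the algebraicity of the extracted quadruple and the Azumaya condition on $\xi_{Y/X}$ must be deduced indirectly. Basepoint-freeness comes from the trace-pairing analysis underlying \cref{proposition:net-YX-is-basepoint-free}, while the Azumaya condition rests on combining the finite global dimension of \cref{proposition:gldim-finite} with the local computation of \cite[remark~34]{MR3847233}, which shows that blowing up on the ramification locus breaks finite global dimension. Without \cref{conjecture:inverse} we do not have a direct geometric inverse from pencils of binary quartics to algebraic quadruples, but everything required for the equivalence of abelian categories is already encoded in the common Clifford algebra model.
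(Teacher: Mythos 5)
Your proposal is correct and follows essentially the same route as the paper's own proof: both directions reduce to the common Clifford-algebra model, with Proposition \ref{proposition:net-YX-is-basepoint-free}, Proposition \ref{proposition:gldim-finite} and Corollary \ref{corollary:center-in-azumaya} handling the forward direction, and \cite[propositions~7 and~8]{MR1356364} together with Theorem \ref{theorem:quadruples-are-isomorphic} handling the converse. Your extra explicit citations (Corollary \ref{corollary:H-is-clifford}, Corollary \ref{corollary:pS-is-clifford}, Proposition \ref{proposition:quaternionic-is-graded-clifford}) merely spell out what the paper subsumes under ``the techniques in'' Sections \ref{section:nc-P1-as-clifford} and \ref{section:blowup-as-clifford}.
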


\subsection{Final remarks}
\label{subsection:final-remarks}
\paragraph{Blowing up a point on the discriminant} {\ } \\
There is an important difference between the construction of \cite{MR3847233} and the construction of a blowup in \cite{MR1846352}. In the latter a point on the point scheme on an arbitrary noncommutative surface is constructed resulting in a noncommutative~$\Bl_x\mathbb{P}^2$, whereas the former only works for sheaves of orders where a point outside (an isogeny of) the point scheme is blown up. Hence these situations have no overlap. In other terms, the latter construction gives a deformation of the commutative~$\Bl_x\mathbb{P}^2$, whereas there is no commutative counterpart for the former.

Likewise there is an important difference between the notion of a noncommutative~$\mathbb{P}^1$\dash bundle of type~$(1,4)$ \cite{1503.03992v4} and that of a noncommutative~$\mathbb{P}^1$\dash bundle of type~$(1,4)$ \cite{MR2958936}. The latter can be used to construct the noncommutative Hirzebruch surface~$\mathbb{F}_1$. It is expected that this construction gives the same surfaces as the blowup from \cite{MR1846352}, but no proof has been found so far.

We can sum up the situation in the following table, where the rows give the same noncommutative surfaces (albeit conjecturally for the first row), whilst the columns are constructions of an analogous nature. This paper shows that the bottom row gives isomorphic noncommutative surfaces.

\begin{table}[H]
  \centering
  \begin{tabular}{cc}
    noncommutative~$\mathbb{P}^1$\dash bundle of type~$(2,2)$ & abstract blowup on point scheme \\
    \midrule
    noncommutative~$\mathbb{P}^1$\dash bundle of type~$(1,4)$ & pullback along blowup outside ramification
  \end{tabular}
  \caption{Four constructions of a noncommutative $\mathbb{F}_1=\Bl_x\mathbb{P}^2$}
  \label{table:4-constructions}
\end{table}

It is possible to degenerate the situation in the bottom row. If one were to blow up a point on the ramification of the maximal order on~$\mathbb{P}^2$ and pull back the sheaf of algebras along this morphism in the setting of \cref{section:blowup-as-clifford}, the resulting sheaf of orders is no longer a maximal order, nor does it have finite global dimension. But there exist two maximal orders in which the pullback can be embedded\footnote{At least when the point which is blown up is a smooth point on the ramification. If one blows up a singular point, there is only~1~preimage under the isogeny and the maximal order is unique.}, and these sheaves of algebras describe the construction of \cite{MR1846352} in the special case where everything is finite over the center. These two maximal orders correspond to the two inverse images under the isogeny~$C\to C/\tau$, where~$C$ is the point scheme and~$C/\tau$ the ramification.

We suggest that there exists a construction of a noncommutative~$\mathbb{P}^1$\dash bundle which does not use a finite morphism~$f\colon\mathbb{P}^1\to\mathbb{P}^1$ of degree~4, but rather a finite morphism~$f\colon C\to\mathbb{P}^1$ of degree~4 where~$C$ is the \emph{singular} conic in the net of conics associated to the point which is being blown up. Then the formalism from \cite{MR3565443} yields an abelian category of infinite global dimension, which should be the same as the category associated to the non-maximal order of infinite global dimension by methods similar to the ones used in this paper.

Now the choice of a maximal order (of finite global dimension) containing the pullback gives the construction from the top row in \cref{table:4-constructions}. In the construction of the noncommutative~$\mathbb{P}^1$\dash bundle this seems to correspond to a resolution of singularities of the singular conic\footnote{If~$C$ is a double line there will be only one morphism~$\mathbb{P}^1\to\mathbb{P}^1$ of degree~2, which should correspond to there only being one choice of maximal order.}~$C$ degenerating the degree~4 morphism~$f\colon C\to\mathbb{P}^1$ to two degree~2 morphisms~$f_i\colon\mathbb{P}^1\to\mathbb{P}^1$. These then degenerate the noncommutative~$\mathbb{P}^1$\dash bundle of type~$(1,4)$ to two different noncommutative~$\mathbb{P}^1$\dash bundles of type~$(2,2)$. We leave the details for this comparison for future work.

\paragraph{Hochschild cohomology} {\ } \\
In \cite{MR2183254,MR2238922} a deformation theory of abelian ategories was introduced, which is controlled by their Hochschild cohomology~$\HHHH_{\mathrm{ab}}^\bullet$. This turns out to be a derived invariant, so we can use the finite-dimensional algebra obtained through the full and strong exceptional collection to compute it.

In \cite{1705.06098v1} the first author computed the Hochschild cohomology of noncommutative planes and quadrics.  Computer algebra calculations suggest that the result for the noncommutative surfaces considered in this paper is less varied than it is in the situation of op.~cit., where a wide variety of situations appears. Recall that the method to compute Hochschild cohomology for noncommutative planes and quadrics consists of having an explicit computation for~$\HHHH_{\mathrm{ab}}^1$, a vanishing result for~$\HHHH_{\mathrm{ab}}^i$ with~$i\geq 3$ and the Lefschetz trace formula for Hochschild cohomology which then determines the dimension of~$\HHHH_{\mathrm{ab}}^2$ in terms of the number of exceptional objects in the derived category.

For the noncommutative del Pezzo surfaces studied in this paper, only the noncommutative~$\mathbb{P}^1$\dash bundle associated to the pencil of type~A (see also \cref{example:type-A-pencil-of-quartics}) has~$\dim_k\HHHH_{\mathrm{ab}}^1(\qgr\mathbb{S}({}_f\mathcal{L}_{\id}))=1$, all other cases have~$\HHHH^1=0$. Because there is a full and strong exceptional objects of length~4\ in the derived category this means that the noncommutative~$\mathbb{P}^1$\dash bundle associated to the pencil of type~A has~$\dim_k\HHHH_{\mathrm{ab}}^2=4$, whilst all other cases have~$\dim_k\HHHH_{\mathrm{ab}}^2=3$.

As it turns out, type~A is also the only pencil of binary quartics which has a~1\dash dimensional symmetry group, all other types have a finite (and often trivial) automorphism group \cite[\S4]{MR1636607}. One way of seeing this is that the morphism~$f$ of type~A is only ramified over two points, and the subgroup of~$\PGL_2$ fixing these two points is isomorphic to~$\Gm$. The only ingredient missing to mimick the method of \cite{1705.06098v1} is a good understanding of the relationship between automorphisms of the pencil and outer automorphisms of the finite-dimensional algebra obtained from the full and strong exceptional collection to confirm these numerical results.

We can also (heuristically) understand the~1\dash dimensional first Hochschild cohomology space for type~A pencils of binary quartics from the blowup model. Using \cite{1705.06098v1} we have a description of the automorphisms of the elliptic triples. The only elliptic triples we need to consider are those whose automorphism group is positive-dimensional, hence the point scheme is either a conic and a line, or a triangle of lines. Now we are interested in those automorphisms of~$C$ (or rather\footnote{Recall that the discriminant is an isogeny of the point scheme, and is a cubic curve of the same type.} the isogeneous~$C/\sigma$) which preserve the point we wish to blow up. In type~A of \cref{table:case-by-case} we see that the automorphisms of~$C$ preserve the point, hence we get a~1\dash dimensional automorphism group. For the other relevant types, namely those for which the first Hochschild cohomology of the associated noncommutative~$\mathbb{P}^2$ is nonzero (i.e.~types~B, E and~F) we get that at most finitely many automorphisms of the curve also preserve the point we wish to blow up. This heuristic works more generally for all cases in the construction of \cite{MR3847233}, not just when~$n=2$.

\newpage

\bibliographystyle{plain}
\bibliography{bibliography-clean,other}

\begin{thebibliography}{10}

\bibitem{MR2583779}
Michela Artebani and Igor Dolgachev.
\newblock The {H}esse pencil of plane cubic curves.
\newblock {\em Enseign. Math. (2)}, 55(3-4):235--273, 2009.

\bibitem{MR1086882}
M.~Artin, J.~Tate, and M.~Van~den Bergh.
\newblock Some algebras associated to automorphisms of elliptic curves.
\newblock In {\em The {G}rothendieck {F}estschrift, {V}ol. {I}}, volume~86 of
  {\em Progr. Math.}, pages 33--85. Birkh\"{a}user Boston, Boston, MA, 1990.

\bibitem{MR1128218}
M.~Artin, J.~Tate, and M.~Van~den Bergh.
\newblock Modules over regular algebras of dimension {$3$}.
\newblock {\em Invent. Math.}, 106(2):335--388, 1991.

\bibitem{MR0814174}
Marc Aubry and Jean-Michel Lemaire.
\newblock Zero divisors in enveloping algebras of graded {L}ie algebras.
\newblock {\em J. Pure Appl. Algebra}, 38(2-3):159--166, 1985.

\bibitem{1705.06098v1}
Pieter Belmans.
\newblock Hochschild cohomology of noncommutative planes and quadrics.
\newblock arXiv:1705.06098v1, accepted for publication in Journal of
  Noncommutative Geometry.

\bibitem{MR3847233}
Pieter Belmans and Dennis Presotto.
\newblock Construction of non-commutative surfaces with exceptional collections
  of length 4.
\newblock {\em J. Lond. Math. Soc. (2)}, 98(1):85--103, 2018.

\bibitem{MR3133293}
Marcello Bernardara and Michele Bolognesi.
\newblock Derived categories and rationality of conic bundles.
\newblock {\em Compos. Math.}, 149(11):1789--1817, 2013.

\bibitem{MR1260714}
W.~Bichsel and M.-A. Knus.
\newblock Quadratic forms with values in line bundles.
\newblock In {\em Recent advances in real algebraic geometry and quadratic
  forms ({B}erkeley, {CA}, 1990/1991; {S}an {F}rancisco, {CA}, 1991)}, volume
  155 of {\em Contemp. Math.}, pages 293--306. Amer. Math. Soc., Providence,
  RI, 1994.

\bibitem{MR1230966}
A.~I. Bondal and A.~E. Polishchuk.
\newblock Homological properties of associative algebras: the method of
  helices.
\newblock {\em Izv. Ross. Akad. Nauk Ser. Mat.}, 57(2):3--50, 1993.

\bibitem{MR1220425}
Stefaan Caenepeel and Freddy Van~Oystaeyen.
\newblock Quadratic forms with values in invertible modules.
\newblock {\em $K$-Theory}, 7(1):23--40, 1993.

\bibitem{MR3343212}
Daniel Chan and Kenneth Chan.
\newblock Rational curves and ruled orders on surfaces.
\newblock {\em J. Algebra}, 435:52--87, 2015.

\bibitem{MR1954458}
Daniel Chan and Rajesh~S. Kulkarni.
\newblock del {P}ezzo orders on projective surfaces.
\newblock {\em Adv. Math.}, 173(1):144--177, 2003.

\bibitem{1607.04246v1}
Louis de~Thanhoffer~de V\"olcsey and Michel~Van den Bergh.
\newblock On an analogue of the {M}arkov equation for exceptional collections
  of length 4.
\newblock arXiv:1607.04246v1.

\bibitem{1503.03992v4}
Louis de~Thanhoffer~de V\"olcsey and Dennis Presotto.
\newblock Homological properties of a certain noncommutative del {P}ezzo
  surface.
\newblock arXiv:1503.03992v4.

\bibitem{MR3565443}
Louis de~Thanhoffer~de V\"olcsey and Dennis Presotto.
\newblock Some generalizations of preprojective algebras and their properties.
\newblock {\em J. Algebra}, 470:450--483, 2017.

\bibitem{MR3617981}
David Eisenbud and Joe Harris.
\newblock {\em 3264 and all that---a second course in algebraic geometry}.
\newblock Cambridge University Press, Cambridge, 2016.

\bibitem{MR0463157}
Robin Hartshorne.
\newblock {\em Algebraic geometry}.
\newblock Springer-Verlag, New York-Heidelberg, 1977.
\newblock Graduate Texts in Mathematics, No. 52.

\bibitem{MR3523984}
Roozbeh Hazrat.
\newblock {\em Graded rings and graded {G}rothendieck groups}, volume 435 of
  {\em London Mathematical Society Lecture Note Series}.
\newblock Cambridge University Press, Cambridge, 2016.

\bibitem{MR3691718}
A.~G. Kuznetsov.
\newblock Exceptional collections in surface-like categories.
\newblock {\em Mat. Sb.}, 208(9):116--147, 2017.

\bibitem{1412.7001v2}
Kevin~De Laet.
\newblock Character series and {S}klyanin algebras at points of order 2.
\newblock Dec 2014.
\newblock arXiv:1412.7001v2.

\bibitem{MR1356364}
Lieven Le~Bruyn.
\newblock Central singularities of quantum spaces.
\newblock {\em J. Algebra}, 177(1):142--153, 1995.

\bibitem{MR2346195}
Joseph Lipman and Amnon Neeman.
\newblock Quasi-perfect scheme-maps and boundedness of the twisted inverse
  image functor.
\newblock {\em Illinois J. Math.}, 51(1):209--236, 2007.

\bibitem{MR2183254}
Wendy Lowen and Michel Van~den Bergh.
\newblock Hochschild cohomology of abelian categories and ringed spaces.
\newblock {\em Adv. Math.}, 198(1):172--221, 2005.

\bibitem{MR2238922}
Wendy Lowen and Michel Van~den Bergh.
\newblock Deformation theory of abelian categories.
\newblock {\em Trans. Amer. Math. Soc.}, 358(12):5441--5483, 2006.

\bibitem{MR2838062}
Shinnosuke Okawa.
\newblock Semi-orthogonal decomposability of the derived category of a curve.
\newblock {\em Adv. Math.}, 228(5):2869--2873, 2011.

\bibitem{MR2776790}
Susan~J. Sierra.
\newblock {$G$}-algebras, twistings, and equivalences of graded categories.
\newblock {\em Algebr. Represent. Theory}, 14(2):377--390, 2011.

\bibitem{MR1388568}
S.~Paul Smith.
\newblock Some finite-dimensional algebras related to elliptic curves.
\newblock In {\em Representation theory of algebras and related topics
  ({M}exico {C}ity, 1994)}, volume~19 of {\em CMS Conf. Proc.}, pages 315--348.
  Amer. Math. Soc., Providence, RI, 1996.

\bibitem{stacks-project}
The {S}tacks~project authors.
\newblock The {S}tacks project.

\bibitem{MR0485835}
Richard~P. Stanley.
\newblock Hilbert functions of graded algebras.
\newblock {\em Advances in Math.}, 28(1):57--83, 1978.

\bibitem{MR2320448}
Darin~R. Stephenson and Michaela Vancliff.
\newblock Constructing {C}lifford quantum {$\Bbb P^3$}s with finitely many
  points.
\newblock {\em J. Algebra}, 312(1):86--110, 2007.

\bibitem{MR2958936}
M.~Van~den Bergh.
\newblock Non-commutative {$\Bbb{P}^1$}-bundles over commutative schemes.
\newblock {\em Trans. Amer. Math. Soc.}, 364(12):6279--6313, 2012.

\bibitem{MR1846352}
Michel Van~den Bergh.
\newblock Blowing up of non-commutative smooth surfaces.
\newblock {\em Mem. Amer. Math. Soc.}, 154(734):x+140, 2001.

\bibitem{MR2836401}
Michel Van~den Bergh.
\newblock Noncommutative quadrics.
\newblock {\em Int. Math. Res. Not. IMRN}, (17):3983--4026, 2011.

\bibitem{MR1880659}
Martine Van~Gastel.
\newblock On the center of the {P}roj of a three dimensional regular algebra.
\newblock {\em Comm. Algebra}, 30(1):1--25, 2002.

\bibitem{MR0432666}
C.~T.~C. Wall.
\newblock Nets of conics.
\newblock {\em Math. Proc. Cambridge Philos. Soc.}, 81(3):351--364, 1977.

\bibitem{MR1636607}
C.~T.~C. Wall.
\newblock Pencils of binary quartics.
\newblock {\em Rend. Sem. Mat. Univ. Padova}, 99:197--217, 1998.

\end{thebibliography}

\end{document}